\newcommand\org@maketitle{}
\newcommand\@authors{}
\let\org@maketitle\maketitle
\def\maketitle{%
	\let\@authors\authors
	\nxandlist{; }{ and }{; }\@authors
	\hypersetup{
		linktocpage=true,
		pdftitle={\@title},
                pdfauthor={\@authors},
                pdfsubject={\subjclassname. \@subjclass},
		pdfkeywords={\@keywords}
	}%
	\org@maketitle
}
\DeclareMathAlphabet{\mathcal}{OMS}{cmsy}{m}{n}
\renewcommand{\PrintDOI}[1]{\doi{#1}}
\numberwithin{equation}{section}
\newtheorem{theorem}{Theorem}[section]
\newtheorem{lemma}[theorem]{Lemma}
\theoremstyle{definition}
\newtheorem{definition}[theorem]{Definition}
\theoremstyle{remark}
\newtheorem{remark}[theorem]{Remark}
\newcommand{\cH}{{\mathcal H}}
\newcommand{\cD}{{\mathcal D}}
\newcommand{\cQ}{{\mathcal Q}}
\newcommand{\al}{\alpha}
\newcommand{\be}{\beta}
\newcommand{\g}{\gamma}
\newcommand{\de}{\delta}
\newcommand{\la}{\lambda}
\newcommand{\ka}{\kappa}
\newcommand{\R}{\mathbb{R}}
\newcommand{\vp}{\varphi}
\newcommand{\bb}{\mathbf{b}}
\newcommand{\bg}{\mathbf{g}}
\newcommand{\bh}{\mathbf{h}}
\newcommand{\bff}{\mathbf{f}}
\newcommand{\bq}{\mathbf{q}}
\newcommand{\bA}{\mathbb{A}}
\newcommand{\bbQ}{\mathbb{Q}}
\newcommand{\uu}{u_\alpha}
\newcommand{\vv}{v_\alpha}
\newcommand{\D}{\nabla}
\newcommand{\supp}{\operatorname{supp}}
\renewcommand{\div}{\operatorname{div}}
\newcommand{\mean}[1]{\langle #1\rangle}
\newcommand{\lmean}[1]{\left\langle #1\right\rangle}
\newcommand{\DMO}{\operatorname{DMO}}
\newcommand{\tr}{\operatorname{tr}}
\def\Xint#1{\mathchoice
  {\XXint\displaystyle\textstyle{#1}}%
  {\XXint\textstyle\scriptstyle{#1}}%
  {\XXint\scriptstyle\scriptscriptstyle{#1}}%
  {\XXint\scriptscriptstyle\scriptscriptstyle{#1}}%
  \!\int}
\def\XXint#1#2#3{{\setbox0=\hbox{$#1{#2#3}{\int}$}
    \vcenter{\hbox{$#2#3$}}\kern-.5\wd0}}
\def\dashint{\Xint-}
\mathchardef\ordinarycolon\mathcode`\:
\author{Hongjie Dong}
\author{Seongmin Jeon}
\address[H. Dong]{Division of Applied Mathematics, Brown University, 182 George Street, Providence RI 02912, USA}
\email{hongjie\_dong@brown.edu}
\address[S. Jeon]{Department of Mathematics Education, Hanyang University, 222 Wangsimni-ro, Seongdong-gu, Seoul 04763, Republic of Korea}
\email{seongminjeon@hanyang.ac.kr}
\thanks{H. Dong was partially partially supported by the NSF under agreement DMS-2055244. S. Jeon was supported by the research fund of Hanyang University(HY-202400000003278).}
\title[Degenerate or singular parabolic systems]{Degenerate or singular parabolic systems with partially DMO coefficients: the Dirichlet problem}
\subjclass[2020]{35B45, 35B65, 35K65, 35K67}
\keywords{Degenerate or singular systems; Partial Dini mean oscillation; Schauder type estimates; boundary Harnack principle for degenerate or singular equations}
\begin{document}
\begin{abstract}
In this paper, we study solutions $u$ of parabolic systems in divergence form with zero Dirichlet boundary conditions in the upper-half cylinder $Q_1^+\subset \R^{n+1}$, where the coefficients are weighted by $x_n^\al$, $\al\in(-\infty,1)$. We establish higher-order boundary Schauder type estimates of $x_n^\al u$ under the assumption that the coefficients have partially Dini mean oscillation. As an application, we also achieve higher-order boundary Harnack principles for degenerate or singular equations with Hölder continuous coefficients.
\end{abstract}

\maketitle

\section{Introduction}

\subsection{Degenerate or singular parabolic systems}
For $\al\in(-\infty,1)$ and $Q_1^+=(-1,0)\times B_1'\times(0,1)\subset\R^{n+1}$, $n\ge2$,  we consider a parabolic system in divergence form with a weight $x_n^\al$ in $Q_1^+$:
   \begin{align}
        \label{eq:pde}
        \begin{cases}
            D_\g(x_n^\al A^{\g\de}D_\de u)-x_n^\al \partial_tu=\div\bg&\text{in }Q_1^+,\\
            u=0&\text{on }Q'_1.
        \end{cases}
    \end{align}
Here, $u=(u^1,\ldots,u^m)^{\tr}$, $m\ge1$, is a (column) vector-valued function. The coefficients $A^{\g\de}=(a^{\g\de}_{ij})^m_{i,j=1}$ are $m\times m$ matrices, $1\le\g,\de\le n$, and satisfy for some constant $\la>0$,
\begin{align}\label{eq:assump-coeffi}
|A^{\g\de}(X)|\le 1/\la,\quad \la|\xi|^2\le a^{\g\de}_{ij}(X)\xi_i^\g\xi_j^\de
\end{align}
for any $X\in Q_1^+$ and $\xi=(\xi_i^\g)\in \R^{m\times n}$. We sometimes denote $A^{\g\de}$ by $A$ for abbreviation.

Equations and systems with singular or degenerate coefficients arise widely in both pure and applied mathematics. For instance, solutions of equations involving fractional Laplacian can be seen as solutions of singular or degenerate equations via the Caffarelli-Silvestre extension \cite{CafSil07}. In finance, mathematical models derived from phenomena involving multiple financial instruments often take the form of degenerate or singular equations, such as the Black–Scholes equation \cite{BlaSch73}. In particular, multi-factor models that incorporate variables like stock price, volatility, and interest rates require systems rather than scalar equations. Degenerate or singular equations are also closely related to probability theory—for instance, the Fokker–Planck equation, which describes the time evolution of probability densities, inherently involves degenerate or singular coefficients \cite{Pav14}.

The main objective of this paper is to establish Schauder type estimates for solutions of \eqref{eq:pde} when the coefficients are of partially Dini mean oscillation.

\begin{definition}\label{def:partial-DMO} For $\theta\in(-1,\infty)$, we let $d\mu^\theta(X):=x_n^{\theta}dX$. We say that a measurable function $f$ in $Q_1^+$ is of \emph{partially Dini mean oscillation with respect to $X'$}, denoted as $L^1(d\mu^\theta)\text{-DMO}_{X'}$, in $Q_1^+$ if the function $\eta_f:(0,1)\to(0,\infty)$ defined by
    \begin{align}\label{eq:pDMO-def}
    \eta_f^\theta(r):=\sup_{X_0\in Q_1^+}\dashint_{Q_r^+(X_0)}\left|f(X)-\dashint_{Q_r'(X_0')}f(Y',x_n)dY'\right|d\mu^\theta(X)
    \end{align}
    is a Dini function, i.e., 
    $$
    \int_0^1\frac{\eta_f^\theta(r)}rdr<\infty.
    $$
\end{definition}

A related problem was considered in \cite{DonJeo24}, but under conormal boundary conditions rather than Dirichlet boundary conditions. In the conormal boundary case, higher regularity of $u$ was obtained. However, for our Dirichlet boundary setting, as observed in \cite{DonPha21}, one expects the regularity for $\uu:=x_n^\al u$, rather than $u$ itself.

To prove this result, we first obtain Schauder type estimates for the parabolic system \eqref{eq:pde} under the assumption that the coefficients belong to Hölder space; see Thoerem~\ref{thm:higher-reg-Holder}. To the best of knowledge, this result is new even for elliptic equations.

At the technical level, we follow the approach in \cite{DonJeo24} and apply  Campanato's method in a neat way. However, it is worth noting that the technical aspects form the central and most intricate challenges.

\subsection{Boundary Harnack Principle}
The higher-order Boundary Harnack principle concerns the higher regularity of the ratio of two scalar functions up to a part of the boundary where both functions vanish. In \cite{TerTorVit22}, the authors established Boundary Harnack principle for the uniformly elliptic equation with non-zero right-hand side by reducing the problem to Schauder type estimates of a certain degenerate equation. They considered divergence form equations with Hölder type coefficients and right-hand sides. 

This result was later extended to the parabolic setting in \cites{AudFioVit24a,AudFioVit24b, DonJeo24}. We also refer to \cite{Kuk22} for related results concerning non-divergence form equation in the parabolic setting, and to \cites{JeoVit24, DonJeoVit23} for elliptic equations with Dini continuous and DMO type coefficients. 

In this paper, we establish a higher-order boundary Harnack principle for parabolic `degenerate or singular' equations with Hölder type coefficients and right-hand sides. For the proof, we follow the approach in \cite{DonJeo24}, the corresponding result for the uniformly parabolic equation. We show that the ratio of two solutions satisfy a certain degenerate equation. To derive the desired estimates, we apply Schauder type estimate for degenerate equations under both conormal and Dirichlet boundary conditions.

\subsection{Main results}
In this section, we state our main results. We use notation from Section~\ref{subsec:notation}. Throughout this paper, we fix $-\infty<\al<1$, and write
$$
\uu:=x_n^\al u.
$$

\begin{theorem}\label{thm:main-reg}
    Let $k\in\mathbb{N}$ and $u$ be a solution of \eqref{eq:pde}. If $A$ satisfies \eqref{eq:assump-coeffi} and $A,\bg\in \cH^{k-1}_{-\al^+}(\overline{Q_1^+})$, then $\uu\in \mathring{C}^{k/2,k}(\overline{Q_{1/2}^+})$ and $x_n^\al Du\in \mathring{C}^{\frac{k-1}2,k-1}(\overline{Q_{1/2}^+})$.
\end{theorem}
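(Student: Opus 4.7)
The plan is to proceed by induction on $k$, adapting the Campanato-type strategy of \cite{DonJeo24} from the conormal setting to the Dirichlet one. The guiding principle, following \cite{DonPha21}, is that while $u$ itself may blow up near $\{x_n=0\}$ when $\al<0$, the product $\uu=x_n^\al u$ inherits the full expected Schauder smoothness, and the weighted Hölder space $\cH^{k-1}_{-\al^+}$ on the coefficient side is precisely matched to this structure.

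For the base case $k=1$, I would establish the Campanato approximation at the boundary: at every base point $X_0\in\overline{Q_{1/2}^+}$ and each scale $r$, produce an adapted polynomial $P_{X_0,r}$ of degree one, built in the natural basis for $\uu$ arising from the model operator $\div(x_n^\al A_0\nabla\,\cdot)$, in which the admissible $x_n$-powers for solutions vanishing on $\{x_n=0\}$ are the odd integers dictated by the indicial exponent $1-\al$, for which
\[
\Bigl(\dashint_{Q_r^+(X_0)}|\uu-P_{X_0,r}|^2\,d\mu^\al\Bigr)^{1/2}\lesssim r^{2}.
\]
The construction freezes coefficients at $X_0$, compares $u$ to the solution of the constant-coefficient model system with matching Dirichlet data, expands that solution in the adapted polynomial basis, and iterates on dyadic scales via a weighted Caccioppoli inequality. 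Campanato's characterization then converts the decay into $\uu\in\mathring{C}^{1/2,1}(\overline{Q_{1/2}^+})$ and $x_n^\al Du\in\mathring{C}^{0}(\overline{Q_{1/2}^+})$.

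For the inductive step, assuming the statement for $k-1$, I would tangentially differentiate the system \eqref{eq:pde}. For each multi-index $(\beta,\ell)$ with $|\beta|+2\ell\leq k-1$, the derivative $D_{x'}^\beta\partial_t^\ell u$ solves a system of the same divergence form, with coefficients $\partial^{(\beta,\ell)}A^{\g\de}$ in $\cH^{k-1-|\beta|-2\ell}_{-\al^+}$ and a right-hand side of the matching order, so the inductive hypothesis yields the regularity of all tangential and time derivatives of $\uu$ up to total weighted order $k$. Normal derivatives are then recovered algebraically by rearranging \eqref{eq:pde} to solve for $x_n^\al A^{nn}D_n u$, inverting $A^{nn}$ using ellipticity, and differentiating further in $x_n$ to express $x_n^\al D_n^j u$ inductively in terms of tangential derivatives and $\bg$; the weight $x_n^\al$ then accounts exactly for the claimed one-order loss from $\uu$ to $x_n^\al Du$.

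The main obstacle, and the heart of the technical work, is the construction of the correct adapted polynomial space at the Dirichlet boundary together with the uniform weighted Caccioppoli and energy bounds needed to push the Campanato decay up to order $r^{k+1}$. Unlike the conormal case of \cite{DonJeo24}, the Dirichlet constraint breaks the symmetry between tangential and normal directions in the basis and forces only specific $x_n$-powers to appear; tuning the basis to simultaneously respect $u=0$, reproduce the sharp model-solution behavior, and close the iteration uniformly up to $\{x_n=0\}$ is where the intricacies emphasized in the introduction concentrate.
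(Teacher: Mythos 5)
Your overall architecture (Campanato iteration for $k=1$, then induction via tangential differentiation and algebraic recovery of normal derivatives from the equation) is in the same family as the paper's, but two of your key steps would fail for the hypotheses actually assumed. First, for $k=1$ the coefficients lie in $\cH_{-\al^+}$, i.e.\ they are merely bounded and of \emph{partially} Dini mean oscillation in $X'$; they need not be continuous at any point, so ``freezing coefficients at $X_0$'' and comparing with a constant-coefficient model is not available. The correct frozen operator keeps the full $x_n$-dependence, $\bar A^{\g\de}(x_n)=\dashint_{Q'_{2r}}A^{\g\de}(Y',x_n)dY'$, and consequently the adapted comparison functions are not powers $x_n^{1-\al}$ from an indicial analysis but integrals such as $\int_0^{x_n}s^{-\al}(\bar A^{nn}(s))^{-1}\ell\,ds$ (the classes $\bA_1^{X_0,r}$, $\bA_2^{X_0,r}$ in the paper). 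Second, an $L^2$-based Campanato scheme cannot be closed here: the error term $x_n^\al(\bar A-A)\D u$ is controlled only in $L^1(d\mu_1)$ averages by the DMO hypothesis, so one must measure the correction $w$ in $L^p$ quasi-norms with $0<p<1$ via a weak type-$(1,1)$ estimate for the Dirichlet problem (Lemma~\ref{lem:weak-type-(1,1)-Diri}); this is the central technical device and is absent from your sketch. Relatedly, the decay you claim, $\bigl(\dashint|\uu-P|^2\,d\mu^\al\bigr)^{1/2}\lesssim r^2$, is of $C^{1,1}$ type and is both unattainable and unnecessary: with Dini-type data one obtains $\vp(X_0,r)\lesssim r\hat\eta(r)$ for a Dini modulus $\hat\eta$, which is exactly what $\mathring{C}^{1/2,1}$ requires.

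Two further omissions matter. The target space $\mathring{C}^{k/2,k}$ includes uniform control and continuity of the half-order time difference quotients $\delta_{t,h}(\cdot)/h^{1/2}$; this does not follow from tangential/time differentiation of integer order and requires the separate argument of Section~\ref{subsec:reg-time} (the quantity $\vp$ and Theorems~\ref{thm:reg-time}, \ref{thm:diff-quo-conti}), plus the difference-quotient solutions $u^h=\delta_{t,h}u/h^{1/2}$ in the induction. And in the inductive step the differentiated right-hand side $D_{x'}g_\g-x_n^\al D_{x'}A^{\g\de}D_\de u$ is only in the correct class once one already knows H\"older regularity of $x_n^\al Du$, which is why the paper first proves the H\"older-coefficient version (Theorem~\ref{thm:higher-reg-Holder}) before upgrading to partially DMO coefficients. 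As written, your induction is circular at this point.
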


Once we have Theorem~\ref{thm:main-reg}, we can repeat the argument in \cite{DonJeo24}*{Remark~1.3} to obtain the following result:

\begin{remark}
    In Theorem~\ref{thm:main-reg}, $D_x^i\partial_t^iu_\al$, $i+2j=k$, are continuous in $X$ except $D_n^ku$.
\end{remark}

In Theorem~\ref{thm:main-reg}, derivatives of $\uu\in \mathring{C}^{k/2,k}$ and $x_n^\al Du$ have moduli of continuity comparable with
\begin{align}\label{eq:mod-conti-sigma}\begin{split}
    \sigma_k(r):=&\left(\|\uu\|_{L^1(Q_1^+,d\mu_1)}+\|\bg\|_{L^\infty(Q_1^+)}+\int_0^1\frac{\eta_{\bg,k}(\rho)}\rho d\rho\right)\times \\
    &\quad\times\left(\int_0^r\frac{\eta_{A,k}(\rho)}{\rho}d\rho+r^\be\int_r^1\frac{\eta_{A,k}(\rho)}{\rho^{1+\be}}d\rho+r^\be
 \right)\\
 &\qquad+\int_0^r\frac{\eta_{\bg,k}(\rho)}\rho d\rho+r^\be\int_r^1\frac{\eta_{\bg,k}(\rho)}{\rho^{1+\be}}d\rho
\end{split}\end{align}
for any chosen $0<\be<1$, where $d\mu_1=x_n^{-\al^+}dX$. More precisely, if $i+2j=k$ for some $i,j\in \mathbb{Z}_+$, then
$$
|D_x^i\partial_t^j\uu(t,x',x_n)-D_x^i\partial_t^j\uu(s,y',x_n)|\lesssim\sigma_k(|x'-y'|+|t-s|^{1/2}).
$$
Moreover, if $i+2j=k-1$ for some $i,j\in \mathbb{Z}_+$ and $0<h<1/4$, then
\begin{align*}
    \left|\frac{\delta_{t,h}D_x^i\partial_t^j\uu}{h^{1/2}}(t,x)-\frac{\delta_{t,h}D_x^j\partial_t^j\uu}{h^{1/2}}(s,y)\right|\lesssim \sigma_k(|x-y|+|t-s|^{1/2}).
\end{align*}

In the theorem, we establish the regularity of both $\uu$ and $x_n^\al Du$. This formulation is essential for carrying out the induction argument needed to obtain the result for any $k\in\mathbb{N}$.

As mentioned earlier, while we follow the approach in \cite{DonJeo24} to prove Theorem~\ref{thm:main-reg}, the proof is much more technical in our setting. The key steps are deriving growth estimates of appropriate quantities $\psi$ and $\vp$ in \eqref{eq:psi-def} and \eqref{eq:phi-def}, respectively; compare with their counterparts in \cite{DonJeo24}.

\medskip

Our next result concerns the boundary Harnack principle for degenerate or singular parabolic equations.

\begin{theorem}
    \label{thm:par-BHP-deg}
    Let $k\in\mathbb{N}$, $-\infty<\al<1$ and $0<\be<1$. Assume that two functions $u$ and $v$ solve
    \begin{align*}\begin{cases}
        \div(x_n^\al A\D u)-x_n^\al\partial_tu=f,\quad u>0&\text{in }Q_1^+,\\
        \div(x_n^\al A\D v)-x_n^\al\partial_tv=g&\text{in }Q_1^+,\\
        u=v=0,\quad \partial_n(x_n^\al u)>0&\text{on }Q_1'.
    \end{cases}\end{align*}
Suppose that $A$ is symmetric and satisfies \eqref{eq:assump-coeffi} and $A,f,g\in C^{\frac{k-1+\be}2,k-1+\be}(\overline{Q_1^+})$. Then we have $v/u\in C^{\frac{k+\be}2,k+\be}(\overline{Q^+_{1/2}})$.
\end{theorem}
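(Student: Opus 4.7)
The plan is, following the strategy of \cite{DonJeo24} for the non-degenerate parabolic case, to derive a weighted divergence-form equation satisfied by $w := v/u$ and then invoke the Schauder estimates for degenerate equations in the conormal setting from \cite{DonJeo24}.

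I would begin by applying Theorem~\ref{thm:main-reg} to both $u$ and $v$ (the right-hand sides $f, g$ being placed in divergence form via a pointwise antiderivative in $x_n$). Since $A, f, g \in C^{(k-1+\be)/2,\,k-1+\be}$ (hence partially DMO), the Hölder specialization $\sigma_k(r) \lesssim r^\be$ of \eqref{eq:mod-conti-sigma} yields $\uu,\, x_n^\al v \in C^{(k+\be)/2,\,k+\be}(\overline{Q_{3/4}^+})$. The Hopf-type hypothesis $\pa_n \uu > 0$ combined with $\uu|_{x_n=0}=0$ allows the factorizations $\uu = x_n H$ and $x_n^\al v = x_n \widetilde H$, where $H, \widetilde H \in C^{(k-1+\be)/2,\,k-1+\be}$ and $H > 0$ up to $\overline{Q_{3/4}'}$. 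In particular $u = x_n^{1-\al}H$, $v = x_n^{1-\al}\widetilde H$, and the baseline regularity $w = \widetilde H/H \in C^{(k-1+\be)/2,\,k-1+\be}$ follows.

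A direct calculation using the symmetry of $A$ yields
\[\div(x_n^\al u^2 A\D w) - x_n^\al u^2\,\pa_t w = ug - vf \quad \text{in } Q_1^+.\]
Substituting $u = x_n^{1-\al}H$ and setting $\tilde\al := 2-\al \in (1,\infty)$ turns the weight into $x_n^{\tilde\al}$ but leaves the off-weight factor $x_n^{\tilde\al - 1} H(g-wf)$ on the right. Introducing the vertical flux $\bq := \tilde\al^{-1} H(g-wf)\,e_n$ and using
\[\div(x_n^{\tilde\al}\bq) = x_n^{\tilde\al - 1} H(g-wf) + \tilde\al^{-1} x_n^{\tilde\al} \pa_n\bigl(H(g-wf)\bigr),\]
one absorbs $x_n^{\tilde\al}\bq$ into the flux on the left to obtain a divergence-form equation with weight $x_n^{\tilde\al}$, uniformly elliptic coefficients $H^2 A$, and a right-hand side of the correct $x_n^{\tilde\al}$-scaling for the conormal framework.

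Since $\tilde\al \in (1,\infty)$, the rewritten equation falls within the scope of the conormal Schauder estimates from \cite{DonJeo24}. Given the baseline $w \in C^{(k-1+\be)/2,\,k-1+\be}$, the flux $\bq$ inherits the same regularity and the right-hand side inherits $C^{(k-2+\be)/2,\,k-2+\be}$ regularity, so the conormal Schauder estimate upgrades $w$ by one order to $C^{(k+\be)/2,\,k+\be}(\overline{Q_{1/2}^+})$. The main obstacle will be to verify that $w$ satisfies the natural conormal boundary condition $\lim_{x_n\to 0^+} x_n^{\tilde\al}(H^2 A\D w - \bq)_n = 0$ on $Q_{1/2}'$, which should follow from the explicit factorizations of $u$ and $v$ together with $\tilde\al > 1$. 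The dependence of $\bq$ on $w$ itself is then handled by induction on $k$, with the base case $k=1$ requiring only the baseline Hölder regularity already obtained.
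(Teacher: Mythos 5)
Your overall strategy --- reduce to a degenerate conormal problem for $w=v/u$ with weight $x_n^{2-\al}$ and invoke the conormal Schauder theory of \cite{DonJeo24} --- is the same as the paper's, and your preliminary steps (regularity of $\uu$ and $x_n^\al v$ via the H\"older Schauder theorem, the factorization $u=x_n^{1-\al}H$ with $H\ge c>0$, and the identity $\div(x_n^\al u^2A\D w)-x_n^\al u^2\partial_tw=ug-vf$) match the paper. The gap is in your conversion of the source into conormal divergence form. The flux $\bq=\tilde\al^{-1}H(g-wf)\vec{e}_n$ does not absorb the source exactly: it leaves the residual $-\tilde\al^{-1}x_n^{\tilde\al}\partial_n\bigl(H(g-wf)\bigr)$, which requires differentiating $H$, $f$, $g$, and $w$ in $x_n$. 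At the base case $k=1$ these are only $C^{\be/2,\be}$ (and $\D w$ is precisely what you are trying to estimate), so this residual is not defined; correspondingly, your claimed ``$C^{(k-2+\be)/2,k-2+\be}$ regularity of the right-hand side'' is a negative-order space when $k=1$. The paper avoids this by taking the exact antiderivative flux $\tilde\bg(X',x_n)=\vec{e}_n\int_0^1\tau^{1-\al}\tilde g(X',x_n\tau)\,d\tau$, which satisfies $\div(x_n^{2-\al}\tilde\bg)=x_n^{1-\al}\tilde g$ identically, leaves no residual, and has the same H\"older regularity as $\tilde g$.

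Two further points. First, by retaining the factor $H^2$ you arrive at $x_n^{\tilde\al}H^2\partial_tw$ rather than $x_n^{\tilde\al}\partial_tw$; the conormal Schauder results you want to cite are for equations whose time weight is exactly the power $x_n^{\tilde\al}$, so one must first divide by $(u/x_n^{1-\al})^{2}=H^2$ as the paper does --- and it is exactly this division that produces the drift term $x_n^{1-\al}\mean{\bb,\D w}$ with $\bb=2A\bigl((1-\al)\vec{e}_n-(u/x_n^{1-\al})^{-1}x_n^\al\D u\bigr)$. Second, the dependence of the right-hand side on $\D w$ cannot be dispatched ``by induction on $k$'': it is already present and is the crux at $k=1$, where the paper must prove the decay $|\bb|\le Cx_n^\be$ together with $\bb\in C^{\be/2,\be}$ and then run a Campanato iteration plus an interpolation/absorption argument (fixing $S_0$ with $CS_0^\be<1/2$ and applying \cite{Gia83}) to close the a priori bound on $[\D w]_{C^{\be/2,\be}}$. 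In your formulation the coefficient multiplying $\partial_nw$ in the residual is essentially $f$, which need not vanish on $\{x_n=0\}$, so the smallness needed for absorption is not evident. These are concrete obstructions, not just omitted details, so the argument as written does not go through for the base case.
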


Due to some technical difficulties, we prove the parabolic boundary Harnack principle for the case with Hölder-type coefficients instead of the (partially) DMO-type coefficients. In \cite{DonJeo24}, the authors established Theorem~\ref{thm:par-BHP-deg} when $\al=0$ by reducing the problem to a degenerate equation:
$$
\div(x_n^2A\D w)-x_n^2\partial_tw=x_n(\tilde f+\mean{\tilde\bb,\D w})\quad\text{for some }\tilde f,\,\tilde\bb.
$$
In our case with general $\al\in(-\infty,1)$, we reduce to
$$
\div(x_n^{2-\al}A\D w)-x_n^{2-\al}\partial_tw=x_n^{1-\al}(\bar f+\mean{\bar\bb,\D w})\quad\text{for some }\bar f,\,\bar\bb.
$$

\subsection{Notation and structure of the paper}\label{subsec:notation}
Throughout the paper, we shall use $X=(t,x',x_n)=(t,x)=(X',x_n)$ to denote a point in $\R^{n+1}$, where $x'=(x_1,\ldots,x_{n-1})$, $x=(x',x_n)$, and $X'=(t,x')$. Similarly, we also write $Y=(s,y)=(s,y',y_n)=(Y',y_n)$ and $X_0=(t_0,x_0)=(t_0,x_0',(x_0)_n)=(X_0',(x_0)_n)$, etc.

For $X\in\R^{n+1}$, we denote
\begin{align*}
    &B'_r(x'):=\{y'\in\R^{n-1}\,:\, |y'-x'|<r\},\quad D_r(x):=B'_r(x')\times(x_n-r,x_n+r),\\
    &D_r^+(x):=B'_r(x')\times((x_n-r)^+,x_n+r),\quad Q'_r(X'):=(t-r^2,t)\times B'_r(x'),\\
    &Q_r(X):=(t-r^2,t)\times D_r(x),\quad Q_r^+(X):=(t-r^2,t)\times D^+_r(x),\\
&\mathbb{Q}_r(X):=(t-r^2,t+r^2)\times D_r(x).
\end{align*}
For abbreviation, we simply write $D_r=D_r(0)$, $D_r^+=D_r^+(0)$, $Q'_r=Q'_r(0)$, etc.

For $\al\in\R$, we write $\al^+:=\max\{\al,0\}\ge0$ and $\al^-:=\min\{\al,0\}\le0$.

Let $\mathbb{Z}_+=\mathbb{N}\cup\{0\}$ be the set of nonnegative integers.

Given $f:Q_1^+\to\R^m$, $h\in(0,1)$, and $1\le\g\le n$, define
\begin{align*}
    &\de_{t,h}f(t,x):=f(t,x)-f(t-h,x).
\end{align*}

Let $\cD\subset\R^n$ be a bounded open set in $\R^n$ and $\cQ:=(a,b)\times\cD$ for some $a,b\in\R$. We define the parabolic H\"older classes $C^{l/2,l}$, for $l=k+\be$, $k\in \mathbb{Z}_+$, $0<\be\le1$, as follows. We let
\begin{align*}
    &[u]_\cQ^{(k)}:=\sum_{|\mathbf\al|+2j=k}\|D_x^{\mathbf\al}\partial_t^ju\|_{L^\infty(\cQ)},\\
    &[u]_{x,\cQ}^{(\be)}:=\sup_{(t,x),(t,y)\in\cQ}\frac{|u(t,x)-u(t,y)|}{|x-y|^\be},\quad [u]_{t,\cQ}^{(\be)}:=\sup_{(t,x),(s,x)\in\cQ}\frac{|u(t,x)-u(s,x)|}{|t-s|^\be},\\
    &[u]_\cQ^{(l)}:=\sum_{|\mathbf\al|+2j=k}[D_x^{\mathbf\al}\partial_t^ju]_{x,\cQ}^{(\be)}+\sum_{k-1\le|\mathbf\al|+2j\le k}[D_x^{\mathbf\al}\partial_t^ju]_{t,\cQ}^{\left(\frac{l-|\mathbf{\al}|-2j}2\right)}.
\end{align*}
Then, we denote $C^{l/2,l}(\cQ)$ to be the space of functions $u$ for which the following norm is finite:
$$
\|u\|_{C^{l/2,l}(\cQ)}:=\sum_{i=0}^k[u]_\cQ^{(i)}+[u]_\cQ^{(l)}.
$$

By $C_x^\be$ and $C_t^\be$, we mean the space of Hölder continuous functions with exponent $\be$ in space and time, respectively. Let $C_X$ and $C_{X'}$ be the spaces of continuous functions in $X$ and $X'$, respectively.

Also, for $\al<1$ and $m\in\mathbb{N}$, we let $H^{1,-\al}(\cQ;\R^m)$ be the weighted Sobolev space consisting of measurable functions $u:\cQ\to\R^m$ equipped with the norm
\begin{align*}
    \|u\|_{H^{1,-\al}(\cQ;\R^m)}&=\|u\|_{L^2(\cQ,x_n^{-\al}dX;\R^m)}+\|Du\|_{L^2(\cQ,x_n^{-\al}dX;\R^m)}\\
    &=\left(\int_\cQ|u(X)|^2x_n^{-\al}dX\right)^{1/2}+\left(\int_\cQ|Du(X)|^2x_n^{-\al}dX\right)^{1/2}.
\end{align*}

Recall the definition of $\eta_f^\theta$ in \eqref{eq:pDMO-def}. Given $k\in\mathbb{Z}_+$, we say that $A\in \cH^k_\theta(\overline{\cQ})$ if
\begin{align}\label{eq:coeffi-space}
    \begin{cases}
        \text{- $A\in L^\infty(\cQ)$ when $k=0$, while $A\in C^{k/2,k}(\overline{\cQ})$ when $k\ge1$,}\\
        \text{- if $i+2j=k$ for some $i,j\in \mathbb{Z}_+$, then $D_x^i\partial_t^jA$ is of $L^1(d\mu^\theta)\text{-DMO}_{X'}$ in $\cQ$,}\\
        \text{- if $i+2j=k-1$ for some $i,j\in \mathbb{Z}_+$, then for any $0<h<1/4$}\\
        \quad\text{$\frac{\delta_{t,h}D_x^i\partial_t^jA}{h^{1/2}}$ is of $L^1(d\mu^\theta)\text{-DMO}_{X'}$,}\\
        \text{- There is a Dini function $\eta^\theta_{A,k}$ such that $\eta^\theta_f\le \eta^\theta_{A,k}$ for every function}\\
        \qquad \text{$f=D_x^i\partial_t^jA,\,\, \frac{\delta_{t,h}D_x^i\partial_t^jA}{h^{1/2}}$ as above.}
    \end{cases}
\end{align}
Next, for given $k\in\mathbb{Z}_+$, we say that $v\in\mathring{C}^{k/2,k}(\overline{\cQ})$ if the following holds:
\begin{align*}
    \begin{cases}
       \text{- $v\in L^\infty(\cQ)$ when $k=0$, while $v\in C^{k/2,k}(\overline{\cQ})$ when $k\ge1$,}\\
        \text{- if $i+2j=k$ for some $i,j\in \mathbb{Z}_+$, then $D_x^i\partial_t^jv\in C_{X'}(\cQ)$,}\\
        \text{- if $i+2j=k-1$ for some $i,j\in \mathbb{Z}_+$, then $\frac{\delta_{t,h}D_x^i\partial_t^jv}{h^{1/2}}\in C_X(\cQ)$ uniformly in}\\
        \qquad\text{$h\in (0,1/4)$, and $\frac{\delta_{t,h}D_x^i\partial_t^jv}{h^{1/2}}(t,x)\to0$ as $h\to0$ uniformly in $(t,x)\in\cQ$.}
    \end{cases}
\end{align*}
We emphasize that in the second condition, the derivatives $D_x^i\partial_t^jv$ may not be continuous in $x_n$. When $k=0$, we simply write $\cH^0_\theta=\cH_\theta$ and $\mathring{C}^{0,0}=\mathring{C}$.

The relation $A\lesssim B$ is understood that $A\le CB$ for some constant $C>0$. $A\approx B$ means $A\lesssim B$ and $B\lesssim A$.

\medskip
The remainder of the paper is organized as follows. In section~\ref{sec:prel}, we provide some preliminary lemmas. Section~\ref{sec:Schauder} is devoted to the proof of Schauder type estimates, Theorem~\ref{thm:main-reg}, in the case $k=1$. We extend this result to general $k\in\mathbb{N}$ in Section~\ref{sec:HO}. Finally, in Section~\ref{sec:BHP}, we prove the boundary Harnack principle, Theorem~\ref{thm:par-BHP-deg}.


\section{Preliminary lemmas}\label{sec:prel}

The following results correspond to Lemmas 2.1, 2.2 and A.1 in \cite{DonJeo24}, respectively.

\begin{lemma}\label{lem:product-par-DMO-elliptic}
     For $\theta>-1$, if $f$ and $g$ are bounded and of $L^1(x_n^\theta dX)$-$\DMO_{X'}$ in $Q_1^+$, then $fg$ is of $L^1(x_n^\theta dX)$-$\DMO_{X'}$ in $Q_{1/2}^+$.
\end{lemma}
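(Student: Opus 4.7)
The plan is to reduce $\eta_{fg}^{\theta}(r)$ to a linear combination of $\eta_f^{\theta}(r)$ and $\eta_g^{\theta}(r)$ via a standard product decomposition, using the boundedness of $f$ and $g$ to absorb factors. Fix $X_0\in Q_{1/2}^+$ and $r\in(0,1/4)$ so that $Q_r^+(X_0)\subset Q_1^+$, and denote the partial $X'$-average at height $x_n$ by
\[
\bar f(x_n):=\dashint_{Q_r'(X_0')}f(Y',x_n)\,dY',\qquad \bar g(x_n):=\dashint_{Q_r'(X_0')}g(Y',x_n)\,dY',
\]
with $\overline{fg}(x_n)$ defined analogously. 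The elementary identities
\[
f(X)g(X)-\bar f(x_n)\bar g(x_n)=(f-\bar f)(X)\,g(X)+\bar f(x_n)\,(g-\bar g)(X),
\]
\[
\overline{fg}(x_n)-\bar f(x_n)\bar g(x_n)=\dashint_{Q_r'(X_0')}(f-\bar f)(Y',x_n)(g-\bar g)(Y',x_n)\,dY'
\]
are the workhorses: the first is purely algebraic, and the second follows because the cross terms $\dashint(f-\bar f)\bar g\,dY'$ and $\dashint\bar f(g-\bar g)\,dY'$ vanish.

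Taking absolute values and using $\|\bar f\|_\infty\le\|f\|_\infty$, $\|\bar g\|_\infty\le\|g\|_\infty$, one obtains the pointwise bound
\[
|f(X)g(X)-\bar f(x_n)\bar g(x_n)|\le \|g\|_\infty|f-\bar f|(X)+\|f\|_\infty|g-\bar g|(X),
\]
and similarly $|\overline{fg}(x_n)-\bar f(x_n)\bar g(x_n)|\le 2\|g\|_\infty\dashint_{Q_r'(X_0')}|f-\bar f|(Y',x_n)\,dY'$. Integrating these with respect to $d\mu^\theta$ over $Q_r^+(X_0)$ and applying Fubini, the second estimate collapses to a multiple of $\dashint_{Q_r^+(X_0)}|f-\bar f|\,d\mu^\theta$, since $\dashint_{Q_r^+(X_0)}$ of a function depending only on $x_n$ coincides with the $x_n^\theta$-weighted average in $x_n$. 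Combining and using the triangle inequality $|fg-\overline{fg}|\le|fg-\bar f\bar g|+|\overline{fg}-\bar f\bar g|$ yields
\[
\dashint_{Q_r^+(X_0)}|fg-\overline{fg}|\,d\mu^\theta\le 3\|g\|_\infty\,\eta_f^\theta(r)+\|f\|_\infty\,\eta_g^\theta(r).
\]

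Taking the supremum over $X_0\in Q_{1/2}^+$ gives $\eta_{fg}^\theta(r)\lesssim \eta_f^\theta(r)+\eta_g^\theta(r)$ for $r\in(0,1/4)$, and the Dini integrability of the right-hand side transfers to the left. For $r\in[1/4,1)$ the trivial bound $\eta_{fg}^\theta(r)\le 2\|fg\|_\infty$ together with $\int_{1/4}^1 r^{-1}\,dr<\infty$ suffices. The assumption $\theta>-1$ is used only to ensure that $d\mu^\theta$ assigns positive, finite mass to each $Q_r^+(X_0)$ so that the averages and the Fubini interchange are legitimate. The main technical nuisance — and the only real thing to check carefully — is that the inclusion $Q_r^+(X_0)\subset Q_1^+$ holds throughout, so that all averages $\bar f(x_n),\bar g(x_n),\overline{fg}(x_n)$ involved in the definition of the DMO moduli are well-defined.
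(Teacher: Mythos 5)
Your argument is correct and is the standard product decomposition $fg-\overline{fg}=(f-\bar f)g+\bar f(g-\bar g)-(\overline{fg}-\bar f\bar g)$ with the cross-term identity and Fubini; the paper itself omits the proof of this lemma (citing Lemma 2.1 of \cite{DonJeo24}), and your proof is exactly the expected one. The bookkeeping (inclusion $Q_r^+(X_0)\subset Q_1^+$ for $r<1/4$, the role of $\theta>-1$, and the trivial bound for $r\ge 1/4$) is all handled correctly.
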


\begin{lemma}
    \label{lem:partial-DMO-weight-rela}
    Let $\theta_1\ge\theta_2>-1$. If $f$ is of $L^1(x_n^{\theta_2} dX)$-$\DMO_{X'}$ in $Q_1^+$, then it is of $L^1(x_n^{\theta_1} dX)$-$\DMO_{X'}$ in $Q^+_{1/2}$. Moreover,
    $$
    \eta_f^{\theta_1}(r)\le C(n,\theta_1,\theta_2)\eta_f^{\theta_2}(r).
    $$
\end{lemma}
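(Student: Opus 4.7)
The plan is to prove the bound $\eta_f^{\theta_1}(r) \le C \eta_f^{\theta_2}(r)$ by a direct comparison of the two weighted averages, splitting into two regimes according to the distance of $X_0$ from the flat face $\{x_n=0\}$. Fix $X_0 \in Q_{1/2}^+$ and $r \in (0, r_0)$ small, set $\bar f(x_n) := \dashint_{Q_r'(X_0')} f(Y',x_n)\,dY'$, and consider separately the ``interior'' case $(x_0)_n \ge 2r$ and the ``boundary'' case $(x_0)_n < 2r$.

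In the interior case, $Q_r^+(X_0) = Q_r(X_0)$ and every $x_n$ in this cylinder lies in $[(x_0)_n/2,\,3(x_0)_n/2]$, so $x_n^{\theta_1-\theta_2}$ is pinned between two constants multiples of $(x_0)_n^{\theta_1-\theta_2}$, the constants depending only on $\theta_1, \theta_2$. Consequently $d\mu^{\theta_1}$ and $d\mu^{\theta_2}$ are comparable up to a single multiplicative constant on $Q_r^+(X_0)$, and the weighted averages of $|f - \bar f|$ against these two measures agree up to that constant. Hence the bound by $\eta_f^{\theta_2}(r)$ is immediate.

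In the boundary case, the whole cylinder $Q_r^+(X_0)$ lies in the slab $\{0 < x_n < 3r\}$, so the hypothesis $\theta_1 \ge \theta_2$ gives the pointwise estimate $x_n^{\theta_1-\theta_2} \le (3r)^{\theta_1-\theta_2}$. Writing $d\mu^{\theta_1} = x_n^{\theta_1-\theta_2}\,d\mu^{\theta_2}$ therefore produces
$$\int_{Q_r^+(X_0)} |f-\bar f|\,d\mu^{\theta_1} \le (3r)^{\theta_1-\theta_2}\int_{Q_r^+(X_0)} |f-\bar f|\,d\mu^{\theta_2}.$$
Dividing by $\mu^{\theta_1}(Q_r^+(X_0))$ and computing both measures explicitly using the one-variable integral $\int_0^{(x_0)_n+r} x_n^\theta\,dx_n = \bigl((x_0)_n+r\bigr)^{\theta+1}/(\theta+1)$, we see that (since $r \le (x_0)_n + r \le 3r$ and $\theta_2 > -1$ ensures the integrability needed at $x_n=0$) both $\mu^{\theta_1}(Q_r^+(X_0))$ and $\mu^{\theta_2}(Q_r^+(X_0))$ scale like $r^{n+2+\theta_i}$, with constants depending only on $n$ and $\theta_i$. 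The scale factor $r^{\theta_1-\theta_2} \cdot r^{n+2+\theta_2}/r^{n+2+\theta_1} = 1$ then cancels, and the right-hand side becomes a constant multiple of $\dashint_{Q_r^+(X_0)}|f - \bar f|\,d\mu^{\theta_2} \le \eta_f^{\theta_2}(r)$.

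I do not anticipate a genuine obstacle: the argument is weighted-measure bookkeeping. The only place requiring care is verifying in the boundary case that $\mu^{\theta_i}(Q_r^+(X_0)) \approx r^{n+2+\theta_i}$ uniformly in $(x_0)_n \in [0, 2r)$, which relies essentially on the assumption $\theta_2 > -1$ so that the integral $\int_0^{2r} x_n^{\theta_2}\,dx_n$ is finite and of the correct order $r^{\theta_2+1}$. Once this is checked, the two cases combine to give $\eta_f^{\theta_1}(r) \le C(n,\theta_1,\theta_2)\,\eta_f^{\theta_2}(r)$, which in particular preserves the Dini property.
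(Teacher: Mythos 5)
Your proof is correct. The paper gives no argument for this lemma—it simply cites \cite{DonJeo24}*{Lemma 2.2}—and your two-case decomposition (comparability of $d\mu^{\theta_1}$ and $d\mu^{\theta_2}$ when $(x_0)_n\ge 2r$, versus the pointwise bound $x_n^{\theta_1-\theta_2}\le(3r)^{\theta_1-\theta_2}$ combined with $\mu^{\theta_i}(Q_r^+(X_0))\approx r^{n+2+\theta_i}$ when $(x_0)_n<2r$) is the natural argument and is sound; it works because the inner average $\dashint_{Q_r'(X_0')}f(Y',x_n)\,dY'$ is unweighted and hence identical for both exponents. The only cosmetic imprecision is that the displayed formula $\int_0^{(x_0)_n+r}x_n^\theta\,dx_n=((x_0)_n+r)^{\theta+1}/(\theta+1)$ only covers the sub-case $(x_0)_n\le r$; for $r<(x_0)_n<2r$ the lower limit is $(x_0)_n-r>0$, but the two-sided bound $\mu^\theta(Q_r^+(X_0))\approx r^{n+2+\theta}$ you actually use still holds uniformly, as you correctly note.
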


\begin{lemma}
    \label{lem:appen}
    Let $\sigma:(0,1)\to[0,\infty)$ be a Dini function, $0<\ka<1$ and $0<\be<1$. For $\be'=\frac{1+\be}2$, we set
    $$
    \tilde\sigma(r):=\sum_{j=0}^\infty\ka^{\be' j}\sigma(\ka^{-j}r)[\ka^{-j}r\le1],\quad \hat\sigma(r):=\sup_{\rho\in[r,1]}(r/\rho)^{\be'}\tilde\sigma(r),\qquad 0<r<1,
    $$
    where we used Iverson's bracket notation (i.e., $[P]=1$ when $P$ is true, while $[P]=0$ otherwise). Then
    \begin{align*}
            \int_0^r\frac{\hat\sigma(\rho)}\rho d\rho\lesssim \int_0^r\frac{\sigma(\rho)}\rho d\rho+r^\be\int_r^1\frac{\sigma(\rho)}{\rho^{1+\be}}d\rho.
    \end{align*}
\end{lemma}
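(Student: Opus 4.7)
The plan is to exploit the structure of $\hat\sigma$ via an almost-monotonicity property, reduce the integral to a discrete sum at geometric scales, expand everything as a series in $\sigma$, and collect terms by Fubini.

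First, rewriting $\hat\sigma(r)/r^{\be'}=\sup_{\rho\in[r,1]}\tilde\sigma(\rho)/\rho^{\be'}$ shows that $r\mapsto\hat\sigma(r)/r^{\be'}$ is non-increasing, since the supremum is over an interval that grows as $r$ shrinks. This gives $\hat\sigma(\rho)\le\ka^{-\be'}\hat\sigma(\ka^{k+1}r)$ on each dyadic shell $\rho\in[\ka^{k+1}r,\ka^k r]$, and since $\int_{\ka^{k+1}r}^{\ka^k r}d\rho/\rho=\log(1/\ka)$, we obtain $\int_0^r\hat\sigma(\rho)/\rho\,d\rho\lesssim\sum_{k\ge 1}\hat\sigma(\ka^k r)$. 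Next, I would bound each $\hat\sigma(\ka^k r)$ by covering $[\ka^k r,1]$ with the dyadic intervals $[\ka^{k-m-1}r,\ka^{k-m}r]$, $m\ge 0$, and estimating $(\ka^k r/\tau)^{\be'}\le\ka^{m\be'}$ at the right endpoint; assuming without loss of generality that $\sigma$ is non-decreasing, this yields $\hat\sigma(\ka^k r)\lesssim\sum_{m\ge 0}\ka^{m\be'}\tilde\sigma(\ka^{k-m}r)$. Expanding $\tilde\sigma$ via its defining series in $\sigma$ produces a triple sum of the form $\ka^{(m+j)\be'}\sigma(\ka^{k-m-j}r)\mathbf{1}_{\ka^{k-m-j}r\le 1}$.

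Reindexing by the net scale $L:=k-m-j$, the $(m,j)$-pairs with $m,j\ge 0$ and $m+j=k-L$ all carry the weight $\ka^{(k-L)\be'}$ and number $k-L+1$. Summing in $k$, for $L\ge 1$ the resulting geometric-arithmetic series is a constant, and one recovers $\sum_{L\ge 1}\sigma(\ka^L r)\lesssim\int_0^r\sigma(\rho)/\rho\,d\rho$. For $-M\le L\le 0$ (with $M:=\log_{1/\ka}(1/r)$), the coefficient of $\sigma(\ka^L r)$ is comparable to $(2-L)\ka^{\be'}(r/\rho_L)^{\be'}$ with $\rho_L:=\ka^L r\in[r,1]$, producing the tail $r^{\be'}\int_r^1\log(\rho/r)\,\sigma(\rho)/\rho^{1+\be'}\,d\rho$. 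Using the elementary bound $u^{-\gamma}\log u\lesssim 1$ for $u\ge 1$ with $\gamma:=\be'-\be=(1-\be)/2>0$, this is in turn dominated by $r^{\be}\int_r^1\sigma(\rho)/\rho^{1+\be}\,d\rho$, completing the proof.

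The main obstacle is the bookkeeping in this reindexing: the multiplicity factor $k-L+1$ produces an arithmetic $(2-L)\sim|L|$ in the tail regime, and absorbing the resulting logarithmic factor into the stated $r^\be$-tail crucially exploits the strict gap $\be'>\be$ built into the choice $\be'=(1+\be)/2$. This gap of $(1-\be)/2$ is exactly what prevents a logarithmic loss in the final bound; without it, the triple sum would close only with an extraneous $\log(1/r)$ factor.
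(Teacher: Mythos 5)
The paper itself gives no proof of this lemma (it is quoted from \cite{DonJeo24}*{Lemma~A.1}), so there is nothing to compare your argument against; I can only assess it on its own terms. The architecture is right and matches the standard argument for such statements: the monotonicity of $r\mapsto\hat\sigma(r)/r^{\be'}$, the reduction of $\int_0^r\hat\sigma(\rho)\rho^{-1}d\rho$ to $\sum_{k\ge1}\hat\sigma(\ka^kr)$, the expansion into a triple sum, the reindexing by the net scale $L=k-m-j$ with multiplicity $k-L+1$, and the absorption of the resulting logarithm into $r^\be\int_r^1\sigma(\rho)\rho^{-1-\be}d\rho$ via the gap $\be'-\be=(1-\be)/2>0$ are all correct. (You also correctly read $\tilde\sigma(\rho)$ in place of the typo $\tilde\sigma(r)$ inside the supremum, consistently with \eqref{eq:hat-eta}.)

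The genuine gap is the phrase ``assuming without loss of generality that $\sigma$ is non-decreasing.'' This is not a harmless normalization: replacing $\sigma$ by a non-decreasing majorant (e.g.\ its running supremum) can inflate $\int_0^r\sigma(\rho)\rho^{-1}d\rho$ and $r^\be\int_r^1\sigma(\rho)\rho^{-1-\be}d\rho$ by an uncontrolled factor, so proving the inequality for the majorant does not yield it for $\sigma$. In fact the estimate is false for a general Dini function: take $\sigma=N\chi_{[1/2,1/2+\e]}$. Then $\tilde\sigma(1/2)\ge\sigma(1/2)=N$, hence $\hat\sigma(\rho)\ge(2\rho)^{\be'}N$ for $\rho\le1/2$ and $\int_0^{1/4}\hat\sigma(\rho)\rho^{-1}d\rho\ge c(\be)N$, while the right-hand side at $r=1/4$ is $O(N\e)$. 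What rescues the lemma is that it is only applied to the oscillation functions $\eta_\bullet$, which satisfy the two-sided almost-monotonicity \eqref{eq:alm-mon}; that doubling property (not one-sided monotonicity, which for some of your comparisons points in the wrong direction anyway) is exactly what is needed at every discretization step — to dominate $\sup_\tau(\cdot/\tau)^{\be'}\tilde\sigma(\tau)$ over a $\ka$-shell by the value of $\tilde\sigma$ at an endpoint, to convert $\sum_{L\ge1}\sigma(\ka^Lr)$ into $\int_0^r\sigma(\rho)\rho^{-1}d\rho$, and to convert the tail sum into $r^{\be'}\int_r^1\log(\rho/r)\sigma(\rho)\rho^{-1-\be'}d\rho$. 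If you replace your WLOG by the standing hypothesis \eqref{eq:alm-mon} (and account for the at most finitely many indices where the Iverson bracket $[\ka^{-j}\tau\le1]$ switches across a shell), the rest of your proof closes.
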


The following lemma is the weak type-(1,1) estimate, which will play a crucial role in the proof of Schauder type estimates in Section~\ref{sec:Schauder}.

\begin{lemma}
    \label{lem:weak-type-(1,1)-Diri}
    Suppose $d\mu_1=x_n^{-\al}dX$ for some $\al<1$. Given $X_0\in Q_1^+$ and $r\in(0,1)$, we let $\cD_r(x_0)$ and $\tilde \cD_r(x_0)$ be smooth domains in $\R^n_+$ with $D_r^+(x_0)\subset \cD_r(x_0)\subset D^+_{\frac43r}(x_0)$ and $D^+_{\frac32r}(x_0)\subset \tilde \cD_r(x_0)\subset D_{2r}^+(x_0)$, and set $\cQ_r(X_0):=(t_0-r^2,t_0)\times \cD_r(x_0)$ and $\tilde\cQ_r(X_0):=(t_0-4r^2,t_0)\times\tilde\cD_r(x_0)$. For $\bar A ^{\g\de} =\bar A^{\g\de}(x_n)$ satisfying \eqref{eq:assump-coeffi} in $\tilde \cQ_r(X_0)$ and $\bff\in H^{1,-\al}(\tilde\cQ_r(X_0);\R^{m\times n})$, let $u\in H^{1,-\al}(\tilde\cQ_r(X_0);\R^m)$ be a solution of
    \begin{align}
        \label{eq:hom-Diri}
        \begin{cases}
            D_\g(x_n^\al\bar A^{\g\de}D_\de u)-x_n^\al\partial_tu=\div(\bff\chi_{\cQ_r(X_0)})&\text{in }\tilde\cQ_r(X_0),\\
            u=0&\text{on }\partial_p\tilde\cQ_r(X_0).
        \end{cases}
    \end{align}
    Then there is a constant $C=C(n,\la,\al)>0$ such that for every $\tau>0$,
    \begin{align}
        &\mu_1(\{X\in\cQ_r(X_0)\,:\, |x_n^\al\D u(X)|>\tau\})\le \frac{C}\tau\int_{\cQ_r(X_0)}|\bff|d\mu_1,\label{eq:weak-type-1}\\
        &\mu_1(\{X\in\cQ_r(X_0)\,:\, |\uu(X)|>\tau\})\le \frac{Cr}\tau\int_{\cQ_r(X_0)}|\bff|d\mu_1.\label{eq:weak-type-2}
    \end{align}
\end{lemma}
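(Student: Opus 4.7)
The plan is a Calder\'on--Zygmund decomposition of $\bff$ with respect to the weighted parabolic measure $d\mu_1$, combined with the natural $L^2(d\mu_1)$ energy estimate and a boundary regularity bound for the homogeneous Dirichlet problem. By the scaling $X\mapsto rX$ the claim reduces to the case $r=1$, $X_0=0$. Testing \eqref{eq:hom-Diri} against $u$ itself and using \eqref{eq:assump-coeffi} gives the energy identity
$$\int_{\tilde\cQ_1}x_n^\al|\D u|^2\,dX\lesssim \int_{\cQ_1}|\bff|^2 x_n^{-\al}\,dX,$$
which is exactly the $L^2(d\mu_1)\to L^2(d\mu_1)$ boundedness of $\bff\mapsto x_n^\al\D u$; a weighted Poincar\'e inequality (using the Dirichlet boundary on $\partial_p\tilde\cQ_1$) gives the analogue for $\bff\mapsto\uu$ with an extra scale factor.

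Since $-\al>-1$, the measure $x_n^{-\al}\,dX$ is doubling on parabolic cubes intersected with $\R^n_+$, so a stopping-time Calder\'on--Zygmund decomposition of $|\bff|$ at level $\tau$ inside $\cQ_1$ produces disjoint parabolic half-cubes $\{Q^i\}$ with $|\bff|\le \tau$ off $\bigcup Q^i$, $\tau\le \dashint_{Q^i}|\bff|\,d\mu_1\le C\tau$, and $\sum_i\mu_1(Q^i)\le C\|\bff\|_{L^1(d\mu_1)}/\tau$. Write $\bff=\bg+\bh$ where $\bg$ is the CZ good part (bounded by $C\tau$) and $\bh^i:=\bh\chi_{Q^i}$ has zero $d\mu_1$-mean on $Q^i$, and split $u=u_\bg+u_\bh$ accordingly. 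For $u_\bg$ the $L^2$ bound combined with $|\bg|\le C\tau$ yields $\int|x_n^\al\D u_\bg|^2\,d\mu_1\lesssim \tau\int|\bg|\,d\mu_1\le C\tau\|\bff\|_{L^1(d\mu_1)}$, and Chebyshev controls $\{|x_n^\al\D u_\bg|>\tau/2\}$. For $u_\bh$, let $Q^{i,*}$ be a fixed enlargement of $Q^i$; by doubling $\mu_1(\bigcup Q^{i,*})\lesssim\|\bff\|_{L^1(d\mu_1)}/\tau$. Outside $\bigcup Q^{i,*}$ the function $u_\bh$ solves the homogeneous Dirichlet problem, and combining the zero-mean property of each $\bh^i$ with a H\"older/Lipschitz estimate for solutions of the homogeneous equation (available because $\bar A=\bar A(x_n)$ and so tangential difference quotients remain solutions) produces a pointwise bound on $x_n^\al\D u_\bh$ that decays in the parabolic distance from $Q^i$. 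Integrating this bound against $d\mu_1$ on $\cQ_1\setminus\bigcup Q^{i,*}$ gives an $L^1(d\mu_1)$-estimate $\lesssim\|\bff\|_{L^1(d\mu_1)}$, and one last Chebyshev finishes \eqref{eq:weak-type-1}. Inequality \eqref{eq:weak-type-2} follows the same template using the $\uu$-version of the energy bound, which carries the extra factor of $r$.

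The chief technical obstacle will be the pointwise decay estimate for $x_n^\al\D u_\bh$ outside the enlarged bad set while respecting the Dirichlet boundary: since $x_n^\al$ either degenerates or blows up on $\{x_n=0\}$, classical interior regularity cannot be invoked directly, and one has to use a boundary regularity statement that controls $x_n^\al\D u$ rather than $\D u$. The enlargement $Q^{i,*}$ must also be chosen to respect half-cubes abutting $\{x_n=0\}$ so that both the doubling estimate for $\bigcup Q^{i,*}$ and the cancellation property of $\bh^i$ survive. The dependence of $\bar A$ on $x_n$ alone is exactly what makes the tangential shift argument and the resulting boundary estimate feasible.
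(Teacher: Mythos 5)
Your overall strategy --- $L^2(d\mu_1)$ boundedness of $\bff\mapsto x_n^\al\D u$ plus a Calder\'on--Zygmund decomposition in the doubling measure $d\mu_1$ plus a cancellation estimate for mean-zero localized data --- is exactly the skeleton of the paper's proof; the paper simply packages the CZ machinery into an abstract weak-$(1,1)$ lemma (\cite{DonJeo24}*{Lemma 2.3}) and reduces everything to verifying the H\"ormander-type condition
\begin{align*}
\int_{\cQ_r(X_0)\setminus\bbQ_{2\rho}(Y_0)}|x_n^\al\D u|\,d\mu_1\le C\int_{Q_\rho(Y_0)\cap\cQ_r(X_0)}|\hat\bff|\,d\mu_1
\end{align*}
for data supported in $Q_\rho(Y_0)$ with zero $d\mu_1$-mean. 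So the two write-ups agree on everything except the one step that actually carries the difficulty.

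That step is where your sketch has a genuine gap. You propose to get a pointwise decay bound on $x_n^\al\D u_{\bh}$ away from each bad cube by combining the zero-mean of $\bh^i$ with ``a H\"older/Lipschitz estimate for solutions of the homogeneous equation.'' Two things go wrong. First, for homogeneous solutions of the degenerate Dirichlet problem the full quantity $x_n^\al\D u$ is \emph{not} H\"older up to $\{x_n=0\}$: only the tangential part $x_n^\al\D_{x'}u$ and the conormal flux $x_n^\al\bar A^{n\de}D_\de u$ enjoy the $C^{1/2,1}$ estimate (this is \cite{DonPha21}*{(B.1)}, and it is precisely why the paper performs the algebraic substitution \eqref{eq:f}, replacing the given data $\hat\bff$ by a modified $\bff$ so that the relevant pairing becomes $\mean{(x_n^\al\D_{x'}v,V),\hat\bff}$ with $V=x_n^\al(\bar A^{n\de})^{\tr}D_\de v$ --- i.e.\ so that only the well-behaved combinations ever need to be differentiated). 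Your proposal never introduces this substitution, so the weak-$(1,1)$ estimate you would prove is for the wrong operator. Second, even granting the correct regularity statement, the zero-mean of $\bh^i$ cannot be converted into pointwise decay of $x_n^\al\D u_{\bh^i}$ at a point $Z$ far from $Q^i$ without either Green-function/kernel bounds for the degenerate operator (which are not available off the shelf here) or a duality argument: the paper tests $x_n^\al\D u$ against an arbitrary $\bh$ supported in the annulus $\bbQ_{2R}(Y_0)\setminus\bbQ_R(Y_0)$, passes to the solution $v$ of the \emph{adjoint} (backward) problem \eqref{eq:weak-sol-v}, and applies the H\"older estimate to $(x_n^\al\D_{x'}v,V)$ on $Q_{R/2}(Y_0)$, where $v$ is a homogeneous solution. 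Your sketch applies regularity on the wrong side of the pairing, and the ``direct'' route you describe does not close without one of these two additional ingredients. The $L^2$ energy estimate, the doubling of $x_n^{-\al}dX$ for $\al<1$, and the reduction of \eqref{eq:weak-type-2} to the same template with an extra factor of $r$ are all fine.
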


\begin{proof}
For the proof, we follow the idea in \cite{DonJeo24}*{Lemmas 2.3 and 2.4}. We only prove \eqref{eq:weak-type-1} since \eqref{eq:weak-type-2} can be derived in a similar way as outlined in \cite{DonJeo24}*{Lemma~2.4}.

To verify \eqref{eq:weak-type-1}, we assume without loss of generality $t_0=0$. Given $\hat\bff=(\hat f_1,\ldots,\hat f_n)\in H^{1,-\al}(\tilde\cQ_r(X_0);\R^{m\times n})$, we define $\bff=(f_1,\ldots, f_n)$ by
\begin{align}\label{eq:f}
    \begin{cases}
        f^j_\g:=\hat f_\g^j+\bar a^{n\g}_{ji}\hat f_n^i&\text{when }1\le\g\le n-1,\,1\le j\le m,\\
        f^j_n:=\bar a^{nn}_{ji}\hat f_n^i&\text{when }1\le j\le m,
    \end{cases}
\end{align}
 so that for any $v$,
\begin{align}
    \label{eq:v-f}
    \mean{x_n^\al\D v,\bff}=\mean{(x_n^\al\D_{x'}v,V),\hat\bff}, \quad\text{where }V=x_n^\al(\bar A^{n\de})^{\tr}D_\de v.
\end{align}
Then a map $T:\hat\bff\longmapsto x_n^\al \D u$, where $u$ is the unique solution of \eqref{eq:hom-Diri} with $\bff$ as in \eqref{eq:f} is well defined and is a bounded linear operator on $L^2(\cQ_r(X_0);\R^{m\times n},d\mu_1)$ by \cite{DonPha21}*{Theorem~2.2}.

Let $Y_0\in \cQ_r(X_0)$ and $\rho\in(0,r/2)$ and assume that $\hat\bff\in L^2(\cQ_r(X_0);\R^{m\times n},d\mu_1)$ is supported in $Q_\rho(Y_0)\cap \cQ_r(X_0)$ and satisfies $\int_{Q_\rho(Y_0)\cap \cQ_r(X_0)}\hat\bff d\mu_1=0$. We claim that
\begin{align}
    \label{eq:weak-type-cond}
    \int_{\cQ_r(X_0)\setminus \bbQ_{2\rho}(Y_0)}|x_n^\al\D u|d\mu_1\le C\int_{Q_\rho(Y_0)\cap \cQ_r(X_0)}|\hat\bff|d\mu_1.
\end{align}
Once we have \eqref{eq:weak-type-cond}, then \eqref{eq:weak-type-1} follows by \cite{DonJeo24}*{Lemma~2.3}. To obtain \eqref{eq:weak-type-cond}, for $R\ge2\rho$ with $\cQ_r(X_0)\setminus \bbQ_R(Y_0)\neq\emptyset$ and a function $\bh\in C_c^\infty((\bbQ_{2R}(Y_0)\setminus \bbQ_R(Y_0))\cap \cQ_r(X_0);\R^{m\times n})$, we let $v\in H^{1,-\al}((-4r^2,0)\times\tilde\cD_r(x_0);\R^m)$ be a solution of 
\begin{align}
    \label{eq:weak-sol-v}
    \begin{cases}
        D_\g(x_n^\al(\bar A^{\de\g})^{\tr}D_\de v)+x_n^\al\partial_tv=\div\bh\qquad\text{in }(-4r^2,0)\times\tilde\cD_r(x_0),\\
        v=0\qquad\text{on }((-4r^2,0]\times\partial\tilde\cD_r(x_0))\cup(\{0\}\times\tilde\cD_r(x_0)).
    \end{cases}
\end{align}
From the definition of weak solutions of \eqref{eq:hom-Diri} and \eqref{eq:weak-sol-v}, the equality \eqref{eq:v-f}, and the assumption on $\hat\bff$, we infer
\begin{align*}
    &\int_{\cQ_r(X_0)}\mean{x_n^\al\D u,\bh}d\mu_1=\int_{\cQ_r(X_0)}\mean{x_n^\al \D v,\bff}d\mu_1\\
    &=\int_{\cQ_r(X_0)}\mean{(x_n^\al\D_{x'}v,V),\hat\bff}d\mu_1=\int_{Q_\rho(Y_0)\cap \cQ_r(X_0)}\mean{(x_n^\al\D_{x'}v,V),\hat\bff}d\mu_1\\
    &=\int_{Q_\rho(Y_0)\cap \cQ_r(X_0)}\mean{(x_n^\al\D_{x'}v-\mean{x_n^\al\D_{x'}v}_{Q_\rho(Y_0)\cap \cQ_r(X_0)},V-\mean{V}_{Q_\rho(Y_0)\cap \cQ_r(X_0)}),\hat\bff}d\mu_1.
\end{align*}
On the other hand, since $v$ satisfies the homogeneous equation $D_\g(x_n^\al(\bar A^{\de\g})^{\text{tr}}D_\de v)+x_n^\al\partial_tv=0$ in $\bbQ_R(Y_0)\cap\cQ_r(X_0)$ with $v=0$ on $\partial(\bbQ_R(Y_0)\cap\cQ_r(X_0))\cap\{x_n=0\}$, we have by \cite{DonPha21}*{(B.1)} that
\begin{align*}
    [(x_n^\al\D_{x'}v,V)]_{C^{1/2,1}(Q_{R/2}(Y_0)\cap \cQ_r(X_0))}\le \frac{C}{R}\left(\dashint_{Q_R(Y_0)\cap\tilde\cQ_r(X_0)}|x_n^\al\D v|^2d\mu_1\right)^{1/2}.
\end{align*}
By using the previous equality and inequality, we can argue as in the proof of \cite{DonXu21}*{Lemma~4.3} to get \eqref{eq:weak-type-cond}, and complete the proof.
\end{proof}

\section{Schauder type estimates}\label{sec:Schauder}
In this section, we prove our central result, Theorem~\ref{thm:main-reg}, when $k=1$. We will extend this result to arbitrary $k\in\mathbb{N}$ in Section~\ref{sec:HO}.

\subsection{The regularity in space}\label{subsec:Schauder-space} The aim of this section is to prove the following result.

\begin{theorem}\label{thm:reg-Diri}
    For $\al<1$, let $u$ be a solution of \eqref{eq:pde}. Assume that $A$ satisfies \eqref{eq:assump-coeffi} and $A,\bg\in\cH_{-\al}(\overline{Q_1^+})$. Then $\uu$ is Lipschitz in $\overline{Q_{1/2}^+}$ with respect to the $x$-variable, and $x_n^\al \D_{x'}u$ and $x_n^\al A^{n\de}D_\de u-g_n$ are continuous in $\overline{Q_{1/2}^+}$ with respect to the $X$-variable.
\end{theorem}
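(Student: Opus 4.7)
The plan is to adapt the Campanato-type iteration from \cite{DonJeo24} to the present weighted Dirichlet setting, with the weak-type $(1,1)$ estimate of Lemma~\ref{lem:weak-type-(1,1)-Diri} playing the role of the main analytic engine for absorbing the oscillation of $A$ and $\bg$. Fix $X_0\in\overline{Q_{1/2}^+}$ and $r\in(0,1/4)$. I would freeze the coefficients and the source in the tangential directions by setting
\[
\bar A^{\g\de}(x_n):=\dashint_{Q'_r(X_0')}A^{\g\de}(\cdot,x_n)\,dY',\qquad \bar\bg(x_n):=\dashint_{Q'_r(X_0')}\bg(\cdot,x_n)\,dY',
\]
and split $u=v+w$, where $v\in H^{1,-\al}$ solves
\[
D_\g(x_n^\al\bar A^{\g\de}D_\de v)-x_n^\al\partial_t v=\div\!\left\{\left[x_n^\al(\bar A-A)\D u+(\bg-\bar\bg)\right]\chi_{\cQ_r(X_0)}\right\}
\]
on $\tilde\cQ_r(X_0)$ with zero parabolic-boundary data, so that the remainder $w=u-v$ satisfies the frozen-coefficient equation $L_{\bar A}w=\div\bar\bg=D_n\bar g_n(x_n)$ in $\cQ_r(X_0)$, with a right-hand side depending only on $x_n$.

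Applying Lemma~\ref{lem:weak-type-(1,1)-Diri} with $\bff:=x_n^\al(\bar A-A)\D u+(\bg-\bar\bg)$ and invoking the $L^1(d\mu_1)\text{-}\DMO_{X'}$ hypothesis on $A$ and $\bg$ (together with Lemma~\ref{lem:partial-DMO-weight-rela} to convert between reference weights when needed), I would establish the size bound
\[
\int_{\cQ_r(X_0)}|\bff|\,d\mu_1\lesssim \left(\eta_A^{-\al}(r)\,\|x_n^\al\D u\|_{L^\infty}+\eta_\bg^{-\al}(r)\right)\mu_1(\cQ_r(X_0)).
\]
Combined with \eqref{eq:weak-type-1}--\eqref{eq:weak-type-2} and a standard truncation-in-$\tau$ argument, this delivers quantitative $L^1(d\mu_1)$ bounds for $\vv$ and $x_n^\al\D v$ over $\cQ_r(X_0)$, with the bound for $\vv$ carrying the crucial extra factor of $r$ from \eqref{eq:weak-type-2}. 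For the smoother remainder $w$, the boundary Schauder-type estimate for frozen-coefficient problems from \cite{DonPha21} shows that on $Q_{\kappa r}^+(X_0)$ the function $w_\al:=x_n^\al w$ is well approximated by an affine function of $x$, and the pair $(x_n^\al\D_{x'}w,\ x_n^\al\bar A^{n\de}D_\de w-\bar g_n)$ is well approximated by a constant, each with residue $O(\kappa^{\be'})$ for any chosen exponent $\be'\in(0,1)$ and small $\kappa\in(0,1)$.

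I would then introduce the Campanato-type quantities
\[
\psi(X_0,r):=\inf_{\ell}\,r^{-1}\dashint_{Q_r^+(X_0)}|\uu-\ell|\,d\mu_1,
\]
\[
\vp(X_0,r):=\inf_{\mathbf c}\dashint_{Q_r^+(X_0)}\!\left(|x_n^\al\D_{x'}u-\mathbf c'|+|x_n^\al A^{n\de}D_\de u-g_n-c_n|\right)d\mu_1,
\]
where $\ell$ ranges over affine functions of $x$ and $\mathbf c\in\R^{m\times n}$. Combining the splitting $u=v+w$ with the bounds from the previous step yields decay inequalities of the form
\[
\psi(X_0,\kappa r)\le \kappa^{\be'}\psi(X_0,r)+C\bigl(\eta_A^{-\al}(r)+\eta_\bg^{-\al}(r)\bigr)M,
\]
\[
\vp(X_0,\kappa r)\le \kappa^{\be'}\vp(X_0,r)+C\bigl(\eta_A^{-\al}(r)+\eta_\bg^{-\al}(r)\bigr),
\]
for a suitable a priori bound $M$ controlling the relevant base norms of $u$ and $\bg$. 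Iterating via Lemma~\ref{lem:appen} and invoking the Dini hypothesis on $\eta_A^{-\al},\eta_\bg^{-\al}$ produces a uniform bound on $\psi$ and a uniform modulus $\vp(X_0,r)\to0$ as $r\to0$, from which Campanato's characterization yields Lipschitz-in-$x$ regularity of $\uu$ and continuity-in-$X$ of $x_n^\al\D_{x'}u$ and $x_n^\al A^{n\de}D_\de u-g_n$ on $\overline{Q_{1/2}^+}$.

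The main obstacle is the decay inequality for $\psi$. Unlike the conormal case in \cite{DonJeo24}, here $\uu$ has to be Lipschitz in the full spatial variable $x$ \emph{including} the degenerate direction $x_n$, so the affine approximation of $w_\al$ must capture the linear boundary behavior $\uu\sim c\,x_n$ as $x_n\to 0^+$, and the weak-type inequality \eqref{eq:weak-type-2} for $\vv$ is required to provide precisely the extra factor of $r$ that offsets the $r^{-1}$ in the definition of $\psi$. Handling the frozen right-hand side $D_n\bar g_n(x_n)$ in the $w$-equation (where $\bar g_n$ is only DMO in $x_n$, not smooth) and verifying that it is the full conormal flux $x_n^\al A^{n\de}D_\de u-g_n$, rather than $x_n^\al D_n u$ alone, which admits a continuous trace up to $\{x_n=0\}$, are further delicate points that require careful bookkeeping in the frozen-coefficient boundary estimates.
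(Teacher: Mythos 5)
Your overall architecture (tangential freezing of $A$ and $\bg$, the weak type-$(1,1)$ estimate of Lemma~\ref{lem:weak-type-(1,1)-Diri} to absorb the oscillation, and a Campanato iteration on gradient-level quantities) is the same as the paper's. But there is a genuine error at the analytic core: you claim that the weak-type estimates \eqref{eq:weak-type-1}--\eqref{eq:weak-type-2} plus "a standard truncation-in-$\tau$ argument" deliver \emph{$L^1(d\mu_1)$ bounds} for the oscillation part $v$ and $x_n^\al\D v$. They do not. A weak-$(1,1)$ bound $\mu_1(\{|f|>\tau\})\le C\tau^{-1}\|\bff\|_{L^1(d\mu_1)}$ on a set of finite measure yields, via the layer-cake formula, only an $L^p(d\mu_1)$ bound for $p<1$; the integral $\int_T^\infty \tau^{-1}\,d\tau$ diverges, so no $L^1$ bound follows. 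This is precisely why the paper defines $\psi$ in \eqref{eq:psi-def} and $\vp$ in \eqref{eq:phi-def} as $L^p$ quasi-norm averages with a fixed $0<p<1$, and why \eqref{eq:w-est} is an $L^p$ bound with an $L^1$ right-hand side. Your Campanato quantities are $L^1$-based, so the decay inequalities you write down cannot be established by this route; the whole scheme must be recast in $L^p$, $0<p<1$ (which then forces the quasi-triangle-inequality bookkeeping with exponents $|\cdot|^p$ that permeates the paper's Lemmas~\ref{lem:psi-est-Diri}--\ref{lem:gradient-unif-est-Diri}).

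Two further points you flag as "delicate" are in fact where the paper does real work, and your proposal leaves them unresolved. First, your good part $w$ satisfies the frozen equation with right-hand side $D_n\bar g_n(x_n)$, where $\bar g_n$ is merely DMO in $x_n$; no interior or boundary Schauder estimate applies to it directly. The paper removes this term explicitly by subtracting the one-dimensional profile $u_0(x_n)=\int_0^{x_n}s^{-\al}(\bar A^{nn}(s))^{-1}\bar g_n(s)\,ds$, which satisfies $x_n^\al\bar A^{nn}D_nu_0=\bar g_n$ exactly, so that the remaining piece $v$ is a genuine solution of the \emph{homogeneous} frozen system and \cite{DonPha21}*{(4.9), (5.15)} apply; this subtraction is also what identifies $U=x_n^\al A^{n\de}D_\de u-g_n$ (rather than $x_n^\al D_nu$) as the quantity with a continuous extension, via $U=V+x_n^\al\bar A^{n\de}D_\de w+\bar g_n-g_n$. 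Second, your single definition of $\psi$ and $\vp$ does not distinguish the interior regime $r\le d_{X_0}/2$ from the boundary regime $r>d_{X_0}/2$. The paper uses different normalizations ($d_{X_0}^\al$ versus $x_n^\al$) and different classes of comparison constants in the two regimes, and a substantial portion of the proof (Case~3 of Lemma~\ref{lem:psi-est-Diri}, Case~2 of Lemma~\ref{lem:gradient-unif-est-Diri}, and the estimate \eqref{eq:u-d-est-Diri} controlling $(d_{X_0}^\al-d_{Y_0}^\al)\D_{x'}u$) is devoted to patching them at the transition scale $r\approx d_{X_0}$. Without this, the telescoping argument that identifies the pointwise limits of the comparison constants does not go through.
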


In this section, we will derive an a priori estimate of the modulus of continuity of $x_n^\al\D_{x'}u$ and $x_n^\al A^{n\de}D_\de u-g_n$, assuming that $A$ and $\bg$ are smooth. Under this assumption, $x_n^\al \D u$ is bounded in $Q_{1/2}^+$. This can be shown by adapting the argument in the proof of \cite{DonPha21}*{Proposition~4.2}, which treats the homogeneous case with simple coefficients depending only on the $x_n$-variable. Although our setting involves more general coefficients and a nonzero right-hand side, the proof still applies since the Caccioppoli inequality and the difference quotient method in the $t$ and $x'$ variables remains valid. Then, the general case can be obtained by using a standard approximation argument.

In addition, in this section, we fix $\al<1$ and write for simplicity
\begin{align}\label{eq:not}
d\mu_1=x_n^{-\al}dX,\qquad \eta_A=\eta_A^{-\al},\qquad \eta_\bg=\eta_\bg^{-\al}.
\end{align}

We fix $0<p<1$, and for $X_0\in \overline{Q_{1/2}^+}$ and $0<r<1/2$, we set
\begin{align}\label{eq:psi-def}
    \psi(X_0,r):=\begin{cases}
        \inf_{\bq\in\R^{m\times n}}\left(\dashint_{Q_r(X_0)}|(d_{X_0}^\al\D_{x'}u,U)-\bq|^pd\mu_1\right)^{1/p},\quad 0<r\le d_{X_0}/2,\\
        \inf_{q\in\R^m}\left(\dashint_{Q_r^+(X_0)}|(x_n^\al\D_{x'}u, U-q)|^pd\mu_1\right)^{1/p},\quad d_{X_0}/2<r<1/2,
    \end{cases}
\end{align}
where $d_{X_0}:=(x_0)_n$ and $U:=x_n^\al A^{n\de}D_\de u-g_n$.

Next, we consider a few of Dini functions stemmed from $\eta_\bullet$, where $\bullet$ is either $A$ or $\bg$. For some constants $C>c>0$, depending only on $n$ and $\al$, we have
\begin{align}
    \label{eq:alm-mon}
    c\eta_\bullet(r)\le \eta_\bullet(s)\le C\eta_\bullet(r)
\end{align}
whenever $r/2\le s\le r<1$. See \cite{ChoDon19}*{(8.2)}. For a small constant $0<\ka<1$, we set
\begin{align}\label{eq:tilde-eta}
\tilde\eta_\bullet(r):=\sum_{i=0}^\infty\ka^{\be' i}\eta_\bullet(\ka^{-i}r)[\ka^{-i}r\le 1],\quad 0<r<1,
\end{align}
where we used Iverson's bracket notation (i.e., $[P]=1$ when $P$ is true, while $[P]=0$ otherwise). We also let
\begin{align}
    \label{eq:hat-eta}
    \hat\eta_\bullet(r):=\sup_{\rho\in [r,1)}(r/\rho)^{\beta'}\tilde\eta_\bullet(\rho),\quad 0<r<1.
\end{align}
Then $\tilde\eta_\bullet$ and $\hat\eta_\bullet$ are Dini functions satisfying \eqref{eq:alm-mon} and $\hat\eta_\bullet\ge\tilde\eta_\bullet\ge \eta_\bullet$. Moreover, $r\longmapsto \frac{\hat\eta_\bullet(r)}{r^{\beta'}}$ is nonincreasing. See e.g., \cite{Don12}.

An important step in the proof of Theorem~\ref{thm:reg-Diri} is the growth estimate of $\psi$, given in \eqref{eq:psi-est-Diri}. To derive this estimate, we first need to consider the boundary case (Lemma~\ref{lem:bdry-est-Diri}) and the interior case (Lemma~\ref{lem:int-est-Diri}).

\begin{lemma}
    \label{lem:bdry-est-Diri}
    Let $0<p<1$ and $0<\be<1$. If $\bar X_0\in Q'_{1/2}$, then we have for any $0<\rho<r<1/4$,
    $$
    \psi(\bar X_0,\rho)\le C(\rho/r)^{\be'}\psi(\bar X_0,r)+C\|x_n^\al\D u\|_{L^\infty(Q_{2r}^+(\bar X_0))}\tilde\eta_A(2\rho)+C\tilde\eta_\bg(2\rho),
    $$
    where $C=C(n,m,\la,\al,\be,p)>0$.
\end{lemma}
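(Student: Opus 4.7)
\emph{The plan.} I would follow a Campanato-type iteration via coefficient freezing, in the spirit of the conormal estimate in \cite{DonJeo24}. For a freezing scale $s\in(0,r]$, introduce the $X'$-averaged data
$$
\bar A_s^{\g\de}(x_n):=\dashint_{Q'_s(\bar X'_0)}A^{\g\de}(Y',x_n)\,dY',\qquad \overline{\bg}_s(x_n):=\dashint_{Q'_s(\bar X'_0)}\bg(Y',x_n)\,dY',
$$
and, on the enlarged domain $\tilde\cQ_s(\bar X_0)$ of Lemma~\ref{lem:weak-type-(1,1)-Diri}, decompose $u=v+w$ where $w$ is the zero-Dirichlet solution of
$$
D_\g(x_n^\al \bar A_s^{\g\de} D_\de w)-x_n^\al\partial_t w=\div\bigl(\bff_s\,\chi_{\cQ_s(\bar X_0)}\bigr),\qquad \bff_s:=(\bg-\overline{\bg}_s)-x_n^\al(A-\bar A_s)\D u.
$$
A direct verification shows $v:=u-w$ then solves the frozen problem $D_\g(x_n^\al \bar A_s^{\g\de} D_\de v)-x_n^\al\partial_t v=\div\overline{\bg}_s$ in $\cQ_s(\bar X_0)$.

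\emph{One-step estimate.} Applying the weak-$(1,1)$ bound of Lemma~\ref{lem:weak-type-(1,1)-Diri} to $w$ and converting weak-$L^1$ to $L^p$ through Jensen's inequality for the concave map $t\mapsto t^p$, combined with the DMO bounds $\dashint_{\cQ_s}|A-\bar A_s|\,d\mu_1\le C\eta_A(s)$ and $\dashint_{\cQ_s}|\bg-\overline{\bg}_s|\,d\mu_1\le C\eta_\bg(s)$ (which follow from the definition of $\eta_A,\eta_\bg$), I obtain
$$
\Bigl(\dashint_{\cQ_s(\bar X_0)}|x_n^\al\D w|^p\,d\mu_1\Bigr)^{1/p}\le C\|x_n^\al\D u\|_{L^\infty(Q^+_{2s}(\bar X_0))}\eta_A(s)+C\eta_\bg(s).
$$
For the frozen piece $v$, the Lipschitz-in-$X'$ boundary regularity of \cite{DonPha21} applies (since $\bar A_s$ depends only on $x_n$ and $\overline{\bg}_s$ is constant in $X'$) and gives $\psi_v(\bar X_0,\rho)\le C(\rho/s)^{\bar\be'}\psi_v(\bar X_0,s)$ for any prescribed $\bar\be'\in(\be',1)$, where $\psi_v$ is $\psi$ computed with $v$ and the flux $V_v:=x_n^\al \bar A_s^{n\de}D_\de v-\bar g_{n,s}$. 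The algebraic identity
$$
U_u-V_v=x_n^\al \bar A_s^{n\de}D_\de w+x_n^\al(A^{n\de}-\bar A_s^{n\de})D_\de u-(g_n-\bar g_{n,s})
$$
then transfers the decay from $\psi_v$ to $\psi$. Fixing $\ka\in(0,1)$ small enough, depending only on $n,m,\la,\al,\be,p$, so that $C\ka^{\bar\be'}\le \ka^{\be'}$, this produces the one-step growth
$$
\psi(\bar X_0,\ka s)\le \ka^{\be'}\psi(\bar X_0,s)+C\|x_n^\al\D u\|_{L^\infty(Q^+_{2s}(\bar X_0))}\eta_A(s)+C\eta_\bg(s).
$$

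\emph{Iteration and main obstacle.} Iterating the one-step bound at the geometric scales $s_j=\ka^j r$ (the sup-norm term is controlled uniformly by $\|x_n^\al\D u\|_{L^\infty(Q^+_{2r}(\bar X_0))}$) and then comparing a general $\rho\in(0,r)$ with its nearest $s_{j+1}$ yields
$$
\psi(\bar X_0,\rho)\le C(\rho/r)^{\be'}\psi(\bar X_0,r)+C\sum_{m\ge 0}\ka^{\be' m}\bigl[\|x_n^\al\D u\|_{L^\infty(Q^+_{2r}(\bar X_0))}\eta_A(\ka^{-m}\rho)+\eta_\bg(\ka^{-m}\rho)\bigr][\ka^{-m}\rho\le 1],
$$
which, by the very definition \eqref{eq:tilde-eta} of $\tilde\eta_\bullet$ together with the almost-monotonicity \eqref{eq:alm-mon}, matches the claimed form (the shift $\rho\mapsto 2\rho$ absorbs the slack between $\rho$ and $s_{j+1}$). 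The main technical obstacle is the bookkeeping of two weights: the $x_n^\al$ in the operator has to cancel precisely against the $x_n^{-\al}$ in $d\mu_1$ inside $\bff_s$, so that the freezing error is controlled by the weight-free moduli $\eta_A,\eta_\bg$ rather than by a weighted variant, and this cancellation is what allows the weak-$(1,1)$ estimate of Lemma~\ref{lem:weak-type-(1,1)-Diri} to deliver exactly the factors $\eta_A(s)$ and $\eta_\bg(s)$ on the right-hand side. A secondary subtlety is that the infimum in the definition of $\psi(\bar X_0,\cdot)$ at boundary points is only over $\R^m$ (the $U$-component) and not over $\R^{m\times n}$, so comparing $\psi$ with $\psi_v$ must be done at the level of the single-vector approximants for $U_u$ and $V_v$.
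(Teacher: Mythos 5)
Your proposal is correct and follows essentially the same route as the paper: freeze the coefficients and data by $X'$-averaging, split $u$ into a Dirichlet corrector $w$ controlled by the weak type-$(1,1)$ estimate of Lemma~\ref{lem:weak-type-(1,1)-Diri} and a frozen solution $v$ whose decay comes from the boundary estimates of \cite{DonPha21}, then run the Campanato iteration against $\tilde\eta_\bullet$. The only cosmetic difference is that the paper explicitly subtracts the particular solution $u_0(x_n)=\int_0^{x_n}s^{-\al}(\bar A^{nn}(s))^{-1}\bar g_n(s)\,ds$ (and handles the infimum over $q$ by the auxiliary $\tilde v$), whereas you fold $\bar g_{n}$ into the flux $V_v$ — algebraically the same device.
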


\begin{proof}
Without loss of generality, we may assume $\bar X_0=0$. We fix $0<r<1/4$ and denote
\begin{align*}
    &\bar A^{\g\de}(x_n):=\dashint_{Q'_{2r}}A^{\g\de}(Y',x_n)dY',\\
    &\bar\bg(x_n)=(\bar g_1(x_n),\ldots,\bar g_n(x_n)):=\dashint_{Q'_{2r}}\bg(Y',x_n)dY'.
\end{align*}
We observe that 
$$
D_\g(x_n^\al\bar A^{\g\de}D_\de u)-x_n^\al\partial_tu=D_\g(g_\g+x_n^\al(\bar A^{\g\de}-A^{\g\de})D_\de u)\quad\text{in }Q_1^+.
$$
We let 
$$
u_0(x_n):=\int_0^{x_n}s^{-\al}(\bar A^{nn}(s))^{-1}\bar g_n(s)\,ds\quad\text{in }Q^+_{2r}
$$
so that $u_e(X',x_n):=u(X',x_n)-u_0(x_n)$ satisfies
\begin{align*}
    \begin{cases}
        D_\g(x_n^\al\bar A^{\g\de}D_\de u_e)-x_n^\al\partial_tu_e=D_\g((g_\g-\bar g_\g+x_n^\al(\bar A^{\g\de}-A^{\g\de})D_\de u)\chi_{\cQ_r}) &\text{in }Q_{2r}^+,\\
        u_e=0&\text{on }Q'_{2r}.
    \end{cases}
\end{align*}
We take smooth domains $\cD_r$ and $\tilde\cD_r$ satisfying $D_r^+\subset \cD_r\subset D^+_{\frac43r}$ and $D^+_{\frac32r}\subset \tilde\cD_r\subset D^+_{2r}$, and write $\cQ_r:=\cD_r\times(-r^2,0)$ and $\tilde\cQ_r:=\tilde\cD_r\times(-4r^2,0)$. Let $w$ be a solution of
\begin{align*}
    \begin{cases}
        D_\g(x_n^\al\bar A^{\g\de}D_\de w)-x_n^\al\partial_tw=D_\g((g_\g-\bar g_\g+x_n^\al(\bar A^{\g\de}-A^{\g\de})D_\de u)\chi_{\cQ_r})&\text{in }\tilde\cQ_r,\\
        w=0&\text{on }\partial_p\tilde\cQ_r.
    \end{cases}
\end{align*}
Applying the weak type-(1,1) estimate \eqref{eq:weak-type-1} and following the argument as in obtaining \cite{DonJeoVit23}*{(2.8)} give
\begin{align}
    \label{eq:w-est}
    \begin{split}
        \left(\dashint_{Q_r^+}|x_n^\al\D w|^pd\mu_1\right)^{1/p}&\le C\dashint_{Q_{2r}^+}|g_\g-\bar g_\g+x_n^\al(\bar A^{\g\de}-A^{\g\de})D_\de u|d\mu_1\\
        &\le C\|x_n^\al\D u\|_{L^\infty(Q_{2r}^+)}\eta_A(2r)+C\eta_\bg(2r).
    \end{split}
\end{align}

Next, we consider $v:=u_e-w$, which satisfies
\begin{align}
    \label{eq:v-Diri}
    \begin{cases}
        D_\g(x_n^\al\bar A^{\g\de}D_\de v)-x_n^\al\partial_tv=0&\text{in }Q_r^+,\\
        v=0&\text{on }Q'_r.
    \end{cases}
\end{align}
Note that $\D_{x'}v$ also satisfies \eqref{eq:v-Diri}. For $V:=x_n^\al\bar A^{n\de}D_\de v$, we apply \cite{DonPha21}*{(4.9)} to $\D_{x'}v$ and \cite{DonPha21}*{(5.15)} to $v$ with a scaling and a standard iteration (see e.g., \cite{Gia93}*{pages 80-82}) to obtain
\begin{align*}
    &y_n^\al|\D_{x'}v(Y)|\le \frac{Cy_n}r\left(\dashint_{Q_r^+}|x_n^\al\D_{x'}v|^pd\mu_1\right)^{1/p},\quad Y\in Q_{r/2}^+,\\
    &[V]_{C^{1/2,1}(Q^+_{r/2})}\le\frac{C}r\left(\dashint_{Q_r^+}|x_n^\al\D v|^pd\mu_1\right)^{1/p}.
\end{align*}
It follows that for a small constant $\ka\in(0,1/2)$ to be determined later,
\begin{align*}
    \dashint_{Q_{\ka r}^+}|(x_n^\al\D_{x'}v,V-\mean{V}_{Q^+_{\ka r}})|^pd\mu_1&\le C\ka^p\dashint_{Q_r^+}|x_n^\al\D v|^pd\mu_1\\
    &\le C\ka^p\dashint_{Q_r^+}|(x_n^\al\D_{x'}v,V)|^pd\mu_1.
\end{align*}
For any constant vector $q\in \R^m$, we let $\tilde v(X):=v(X)-\int_0^{x_n}s^{-\al} (\bar A^{nn}(s))^{-1}q\,ds$ and $\tilde V:=x_n^\al\bar A^{n\de}D_\de\tilde v$. Then it is easily seen that $\tilde v$ satisfies \eqref{eq:v-Diri}, $D_{x'}\tilde v=D_{x'}v$, and $\tilde V=V-q$. Thus applying the previous inequality with $\tilde v$ yields
\begin{align*}
    \dashint_{Q_{\ka r}^+}|(x_n^\al\D_{x'}v,V-\mean{V}_{Q_{\ka r}^+})|^pd\mu_1\le C\ka^p\dashint_{Q_r^+}|(x_n^\al\D_{x'}v,V-q)|^pd\mu_1.
\end{align*}
On the other hand, by using $x_n^\al\bar A^{n\de}D_\de u_0-\bar g_n=0$, we get $U=V+x_n^\al\bar A^{n\de}D_\de w+\bar g_n-g_n$. This, along with the previous inequality and the equality $\D_{x'}u=\D_{x'}v+\D_{x'}w$, gives
\begin{align*}
    &\dashint_{Q_{\ka r}^+}|(x_n^\al\D_{x'}u,U-\mean{V}_{Q_{\ka r}^+})|^pd\mu_1\\
    &\le C\dashint_{Q_{\ka r}^+}|(x_n^\al\D_{x'}v,V-\mean{V}_{Q_{\ka r}^+})|^pd\mu_1+C\dashint_{Q_{\ka r}^+}\left(|x_n^\al\D w|^p+|\bar g_n-g_n|^p\right)d\mu_1\\
    &\le C\ka^p\dashint_{Q_r^+}|(x_n^\al\D_{x'}v,V-q)|^pd\mu_1+C\dashint_{Q_{\ka r}^+}\left(|x_n^\al\D w|^p+|\bar g_n-g_n|^p\right)d\mu_1\\
    &\le C\ka^p\dashint_{Q_r^+}|(x_n^\al\D_{x'}u,U-q)|^pd\mu_1+C\ka^{-(n+2-\al)}\dashint_{Q_{r}^+}\left(|x_n^\al\D w|^p+|\bar g_n-g_n|^p\right)d\mu_1.
\end{align*}
Since $q\in\R$ is arbitrary, this gives by \eqref{eq:w-est}
$$
\psi(0,\ka r)\le C\ka\psi(0,r)+C\ka^{-\frac{n+2-\al}p}\left(\|x_n^\al\D u\|_{L^\infty(Q^+_{2r})}\eta_A(2r)+\eta_\bg(2r)\right).
$$
We take $\ka\in(0,1/2)$ small so that $C\ka\le \ka^{\be'}$ and use a standard iteration to conclude Lemma~\ref{lem:bdry-est-Diri}. 
\end{proof}

\begin{lemma}
    \label{lem:int-est-Diri}
Let $p$, $\be$, and $\be'$ be as above. If $X_0\in Q_{1/2}^+$ and $0<\rho<r<d_{X_0}/2$, then
$$
\psi(X_0,\rho)\le C(\rho/r)^{\be'}\psi(X_0,r)+C\|x_n^\al\D u\|_{L^\infty(Q_r(X_0))}\tilde\eta_A(\rho)+C\tilde\eta_\bg(\rho).
$$
\end{lemma}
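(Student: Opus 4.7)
My plan is to mirror the proof of Lemma \ref{lem:bdry-est-Diri}, replacing the Dirichlet-boundary ingredients with their uniformly-parabolic interior counterparts. The key simplification is that on $Q_r(X_0)$ with $r < d_{X_0}/2$ we have $x_n \in [d_{X_0}/2, 3d_{X_0}/2]$, so $x_n^\al \approx d_{X_0}^\al$ with constants depending only on $\al$; in particular $d\mu_1 \approx d_{X_0}^{-\al}\,dX$ on $Q_r(X_0)$ and the prefactor $d_{X_0}^\al$ appearing in the interior case of the definition of $\psi$ is pointwise comparable to $x_n^\al$. After dividing by $d_{X_0}^\al$, the equation becomes uniformly parabolic on $Q_r(X_0)$ with ellipticity constants depending only on $n,\la,\al$.

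Without loss of generality we may translate so that $X_0 = (0, d_{X_0}e_n)$. Fix $0 < r < d_{X_0}/2$ and set $\bar A^{\g\de}(x_n) := \dashint_{Q_r'(X_0')} A^{\g\de}(Y', x_n)\,dY'$ and $\bar\bg(x_n)$ analogously. With the corrector $u_0(x_n) := \int_{(x_0)_n}^{x_n} s^{-\al}(\bar A^{nn}(s))^{-1} \bar g_n(s)\,ds$, the same computation as in the proof of Lemma \ref{lem:bdry-est-Diri} shows that $u_e := u - u_0$ satisfies
$$D_\g(x_n^\al \bar A^{\g\de} D_\de u_e) - x_n^\al \partial_t u_e = D_\g\bigl((g_\g - \bar g_\g) + x_n^\al(\bar A^{\g\de} - A^{\g\de}) D_\de u\bigr)$$
in $Q_r(X_0)$. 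Write $u_e = v + w$, where $w$ is the solution, on a slightly enlarged interior cylinder with zero parabolic boundary data, of the above equation with right-hand side localized by $\chi_{Q_r(X_0)}$. Since the equation for $w$ becomes uniformly parabolic after the reduction described above, the standard weak-type $(1,1)$ estimate for divergence-form parabolic systems (combined with the passage from weak-$L^1$ to $L^p$ for $0 < p < 1$) yields
$$\left(\dashint_{Q_r(X_0)} |x_n^\al \D w|^p\, d\mu_1\right)^{1/p} \le C\|x_n^\al \D u\|_{L^\infty(Q_r(X_0))}\,\eta_A(r) + C\,\eta_\bg(r),$$
where the last inequality uses $\dashint_{Q_r(X_0)}|A - \bar A|\, d\mu_1 \le \eta_A(r)$ (and analogously for $\bg$), which is immediate from the definition of $\eta_A = \eta_A^{-\al}$ once one notes $Q_r(X_0) = Q_r^+(X_0)$.

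The remainder $v = u_e - w$ satisfies the homogeneous system $D_\g(x_n^\al \bar A^{\g\de} D_\de v) - x_n^\al \partial_t v = 0$ in $Q_r(X_0)$, and so does $\D_{x'} v$. Setting $V := x_n^\al \bar A^{n\de} D_\de v$, classical interior regularity for uniformly parabolic systems with coefficients depending only on one variable gives the scale-invariant bound
$$[\D_{x'} v]_{C^{1/2,1}(Q_{r/2}(X_0))} + [V]_{C^{1/2,1}(Q_{r/2}(X_0))} \le \frac{C}{r}\left(\dashint_{Q_r(X_0)} |x_n^\al \D v|^p\, d\mu_1\right)^{1/p}.$$
To absorb the full infimum over $\R^{m\times n}$ in the definition of $\psi$, we subtract from $v$ two correctors that are both annihilated by the homogeneous operator: the same $\phi_2(x_n) := \int_0^{x_n} s^{-\al}(\bar A^{nn}(s))^{-1}\bq_n\, ds$ as in the boundary case (which shifts $V$ by $\bq_n \in \R^m$ without changing $\D_{x'} v$), together with a new corrector of the form $\phi_1(X) := \sum_{\g < n}\bq_\g x_\g + h(x_n)$ with $h$ chosen so that $\bar A^{nn} h' = -\sum_{\g < n}\bar A^{n\g}\bq_\g$ (which shifts $D_\g v$ by $\bq_\g \in \R^m$ for each $\g < n$ without changing $V$). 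Combining these with the identity $U = V + x_n^\al \bar A^{n\de} D_\de w + \bar g_n - g_n$ (which uses $x_n^\al \bar A^{n\de} D_\de u_0 = \bar g_n$) yields, for every $\ka \in (0, 1/2)$,
$$\psi(X_0, \ka r) \le C\ka\, \psi(X_0, r) + C \ka^{-(n+2-\al)/p}\bigl(\|x_n^\al \D u\|_{L^\infty(Q_r(X_0))}\,\eta_A(r) + \eta_\bg(r)\bigr).$$
Choosing $\ka$ small so that $C\ka \le \ka^{\be'}$ and performing the standard iteration (which produces $\tilde\eta_A$ and $\tilde\eta_\bg$) then completes the proof.

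I expect the main technical point to be the construction and bookkeeping of the two correctors $\phi_1, \phi_2$ needed to absorb the infimum over the full space $\R^{m\times n}$ (rather than just $\R^m$ as in the boundary case); after that, everything reduces — modulo the factor $d_{X_0}^\al$ that cancels the prefactor in $\psi$ — to classical uniformly parabolic interior regularity on $Q_r(X_0)$.
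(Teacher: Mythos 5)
Your proposal is correct, and the underlying mechanism is the same as the paper's: on $Q_r(X_0)$ with $r<d_{X_0}/2$ one has $x_n\approx d_{X_0}$, so everything reduces to interior regularity for a uniformly parabolic system whose coefficients are laminar in $x_n$. The presentational difference is that the paper first divides the equation by $d_{X_0}^\al$, rewriting it as $D_\g(\tilde A^{\g\de}D_\de u)-a_0(x_n)\partial_t u=\div\tilde\bg$ with $\tilde A=(x_n/d)^\al A$ and $a_0=(x_n/d)^\al$, and then simply invokes the known interior oscillation-decay estimate for the unweighted quantity $\psi^0$ from \cite{DonXu21}*{Lemma~4.4}, multiplying back by $d^\al$ at the end; you instead keep the weight and re-run the Campanato machinery of Lemma~\ref{lem:bdry-est-Diri} directly, which amounts to unpacking that citation. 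Your account of the extra work needed in the interior case is accurate: because the infimum in $\psi(X_0,\cdot)$ for $r\le d/2$ ranges over all of $\R^{m\times n}$, one must absorb shifts in every component of $(d^\al\D_{x'}u,U)$, and your two correctors (a purely $x_n$-dependent one shifting $V$ by $\bq_n$, and an affine-in-$x'$ one with the compensating $h(x_n)$ chosen so that $\bar A^{nn}h'=-\sum_{\g<n}\bar A^{n\g}\bq_\g$, which shifts $\D_{x'}v$ without touching $V$) are exactly the homogeneous solutions that make this work; they are the weighted analogues of the correctors used in \cite{DonXu21}. The only blemishes are cosmetic: the doubling factor on an interior cylinder is $\ka^{-(n+2)/p}$ rather than $\ka^{-(n+2-\al)/p}$ (irrelevant since $\ka$ is eventually fixed), and in the interior regime both $\D_{x'}v$ and $V$ enjoy the full $C^{1/2,1}$ bound — which you correctly use and which is what permits subtracting a constant from the $\D_{x'}$-components as well, in contrast to the boundary case where only the linear-vanishing bound for $\D_{x'}v$ is available.
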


\begin{proof}
We simply write $d=d_{X_0}$ and note that $u$ solves
$$
D_\g(\tilde A^{\g\de}D_\de u)-a_0(x_n)\partial_tu=\div\tilde\bg\quad\text{in }Q_r(X_0),
$$
where $\tilde A^{\g\de}=(x_n/d)^\al A^{\g\de}$, $a_0(x_n)=(x_n/d)^\al$ and $\tilde\bg=(\tilde g_1,\ldots,\tilde g_n)=d^{-\al}\bg$. Since $\tilde A^{\g\de}$ is uniformly elliptic and $a_0(x_n)$ is between $(1/2)^\al$ and $(3/2)^\al$ and is smooth in $Q_r(X_0)$, by following the argument in the proof of \cite{DonXu21}*{Lemma~4.4}, we derive that for any $\ka\in(0,1/2)$ small
\begin{align*}
    &\psi^0(X_0,\ka r)\le C\ka\psi^0(X_0,r)+C\ka^{-\frac{n+2}p}\Bigg(\dashint_{Q_r(X_0)}\bigg|\tilde g_\g(X)-\dashint_{Q_r'(X_0')}\tilde g_\g(Y',x_n)dY'   \\
    &\qquad\qquad\qquad\qquad\quad\quad+\left(\dashint_{Q'_r(X_0')}\tilde A^{\g\de}(Y',x_n)dY'-\tilde A^{\g\de}(X)\right)D_\de u(X)\bigg|^pdX\Bigg)^{1/p},
\end{align*}
where
$$
\psi^0(X_0,r):=\inf_{\bq\in\R^{m\times n}}\left(\dashint_{Q_r(X_0)}|(\D_{x'}u,\tilde A^{n\de}D_\de u-\tilde g_n)-\bq|^pdX\right)^{1/p}.
$$
By multiplying $d^\al$ in both sides in the previous equality and using the fact that $(d/x_n)^{\al}$ is between $(1/2)^{-\al}$ and $(3/2)^{-\al}$ in $Q_r(X_0)$, we infer
\begin{align*}
    \psi(X_0,\ka r)\le C\ka\psi(X_0,r)+C\ka^{-\frac{n+2}p}\left(\|x_n^\al\D u\|_{L^\infty(Q_r(X_0))}\eta_A(r)+\eta_\bg(r)\right).
\end{align*}
By taking $\ka$ small and using a standard iteration, this implies Lemma~\ref{lem:int-est-Diri}.    
\end{proof}

In the rest of this section, when there is no confusion, we simply write $d=d_{X_0}$ and $\|\bg\|_\infty=\|\bg\|_{L^\infty(Q_1^+)}$.

\begin{lemma}
    \label{lem:psi-est-Diri}
Let $0<p<1$ and $0<\be<1$. Then we have for any $X_0\in Q_{1/2}^+$ and $0<\rho\le r<1/16$,
\begin{align}
    \label{eq:psi-est-Diri}\begin{split}
    \psi(X_0,\rho)&\le C(\rho/r)^{\be'}\left(\dashint_{Q_{8r}^+(X_0)}|x_n^\al\D u|d\mu_1+\|\bg\|_\infty\right)\\
    &\qquad+C\left(\|x_n^\al\D u\|_{L^\infty(Q_{8r}^+(X_0))}\hat\eta_A(\rho)+\hat\eta_\bg(\rho)\right).
\end{split}\end{align}
\end{lemma}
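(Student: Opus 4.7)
The plan is to combine the two one-scale estimates of Lemmas~\ref{lem:bdry-est-Diri} and~\ref{lem:int-est-Diri} through a dichotomy based on the position of $\rho$ relative to $d:=d_{X_0}=(x_0)_n$, using the projected boundary point $\bar X_0:=(t_0,x_0',0)$ as a pivot. When $\rho\ge d/2$ the cylinder $Q_\rho^+(X_0)$ already reaches the Dirichlet boundary, so I reduce $\psi(X_0,\cdot)$ to $\psi(\bar X_0,\cdot)$ and invoke Lemma~\ref{lem:bdry-est-Diri}; when $\rho<d/2$ I apply Lemma~\ref{lem:int-est-Diri} on $(\rho,\min\{r,d/2\})$, and if $r>d/2$ additionally bridge through $\bar X_0$ between scales $d/2$ and $r$.

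\medskip

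\emph{Case 1: $\rho\ge d/2$.} The inclusion $Q_\rho^+(X_0)\subset Q_{3\rho}^+(\bar X_0)$ (since $d+\rho\le 3\rho$) and the doubling of $\mu_1$ give $\psi(X_0,\rho)\le C\psi(\bar X_0,3\rho)$ directly from the boundary-type definition of $\psi$ at both points. Applying Lemma~\ref{lem:bdry-est-Diri} at $\bar X_0$ with rescaled parameters $\rho'=3\rho$ and $r'=4r$ (note $3\rho\le 3r<4r<1/4$) yields the factor $(\rho/r)^{\be'}$ in front of $\psi(\bar X_0,4r)$, together with residual terms $C\|x_n^\al\D u\|_{L^\infty(Q_{8r}^+(\bar X_0))}\tilde\eta_A(6\rho)+C\tilde\eta_\bg(6\rho)$. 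Choosing the trivial competitor $q=0$ in the definition of $\psi(\bar X_0,4r)$ and applying Jensen's inequality (valid since $p<1$) gives
$$\psi(\bar X_0,4r)\le C\dashint_{Q_{4r}^+(\bar X_0)}|x_n^\al\D u|\,d\mu_1+C\|\bg\|_\infty,$$
and the inclusions $Q_{4r}^+(\bar X_0),Q_{8r}^+(\bar X_0)\subset Q_{8r}^+(X_0)$ (both using $d\le 2r$) transport every remaining spatial integral and supremum into $Q_{8r}^+(X_0)$.

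\medskip

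\emph{Case 2: $\rho<d/2$.} If $r\le d/2$, I apply Lemma~\ref{lem:int-est-Diri} directly to the pair $(\rho,r)$. Because $x_n\approx d$ on $Q_r(X_0)$, the factor $d^\al$ in the interior definition of $\psi(X_0,r)$ is comparable to $x_n^\al$; choosing $\bq=0$ and applying Jensen bounds $\psi(X_0,r)$ by $C\dashint_{Q_r(X_0)}|x_n^\al\D u|\,d\mu_1+C\|\bg\|_\infty$, which in turn is dominated by the corresponding average over $Q_{8r}^+(X_0)$. If $r>d/2$, I first apply Lemma~\ref{lem:int-est-Diri} on $(\rho,d/2)$ (legitimate at the endpoint by continuity in $r$, or else applied on $(\rho,d/4)$ with identical asymptotic effect) to extract the factor $(\rho/d)^{\be'}$ in front of $\psi(X_0,d/2)$, and then repeat the Case~1 reduction at scale $d/2$: the inclusion $Q_{d/2}(X_0)\subset Q_{3d/2}^+(\bar X_0)$ together with $d^\al\approx x_n^\al$ on $Q_{d/2}(X_0)$ yield $\psi(X_0,d/2)\le C\psi(\bar X_0,3d/2)$, and Lemma~\ref{lem:bdry-est-Diri} with $\rho'=3d/2$, $r'=4r$ then produces the further factor $(d/r)^{\be'}$. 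The telescoping product $(\rho/d)^{\be'}(d/r)^{\be'}=(\rho/r)^{\be'}$ recovers the stated leading term.

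\medskip

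Every residual $\tilde\eta_\bullet$ is then upgraded to $\hat\eta_\bullet$: terms of the form $\tilde\eta_\bullet(c\rho)$ with $c=O(1)$ are handled by iterating the almost-monotonicity \eqref{eq:alm-mon} and using $\tilde\eta_\bullet\le\hat\eta_\bullet$, while the mixed term $(\rho/d)^{\be'}\tilde\eta_\bullet(cd)$ produced in Case~2 is controlled directly by the defining supremum $\hat\eta_\bullet(\rho)\ge(\rho/(cd))^{\be'}\tilde\eta_\bullet(cd)$ evaluated at $\tau=cd\ge\rho$. I expect the main technical obstacle to be bookkeeping rather than anything deeper: all intermediate cylinders $Q_{3\rho}^+(\bar X_0)$, $Q_{4r}^+(\bar X_0)$, $Q_{3d/2}^+(\bar X_0)$, $Q_{8r}^+(\bar X_0)$ must be placed inside $Q_{8r}^+(X_0)$ with $\mu_1$-measure ratios controlled by the doubling constant, and the two different formulas for $\psi$ (interior vs.\ boundary) must be reconciled at the transition $r=d/2$—which is precisely what forces the enlarged cylinder $Q_{8r}^+(X_0)$ to appear in the statement.
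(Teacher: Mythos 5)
Your proposal is correct and follows essentially the same route as the paper's proof: the same three-way dichotomy ($\rho<r\le d/2$; $d/2\le\rho\le r$; $\rho<d/2<r$), the same reduction $\psi(X_0,\cdot)\le C\psi(\bar X_0,3\,\cdot)$ to the projected boundary point, the same trivial-competitor bound for the top scale, and the same use of the monotonicity of $t\mapsto\hat\eta_\bullet(t)/t^{\be'}$ to absorb the mixed term $(\rho/d)^{\be'}\tilde\eta_\bullet(cd)$ into $\hat\eta_\bullet(\rho)$. The only differences are cosmetic choices of intermediate radii.
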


\begin{proof}
we consider three cases $\rho<r<d/2$, $d/2<\rho<r$ and $\rho\le d/2\le r$.

\medskip\noindent\emph{Case 1.} Suppose $\rho<r<d/2$. By using $|U|\le C|x_n^\al\D u|+|\bg|$ and $x_n/d\in(1/2,3/2)$ for every $X\in Q_{d/2}(X_0)$, we get
\begin{align}\label{eq:psi-bound}
    \psi(X_0,r)\le\left(\dashint_{Q_r(X_0)}|(x_n^\al\D_{x'}u,U)|^pd\mu_1\right)^{1/p}\le C\left(\dashint_{Q_r(X_0)}|x_n^\al\D u|d\mu_1+\|\bg\|_\infty\right).
\end{align}
Combining this with Lemma~\ref{lem:int-est-Diri} gives \eqref{eq:psi-est-Diri}.

\medskip\noindent\emph{Case 2.} Suppose $d/2<\rho<r$. Then, for $\bar X_0:=(X'_0,0)\in Q'_{1/2}$ we have
\begin{align*}
    \psi(\bar X_0,3r)\le\left(\dashint_{Q_{3r}^+(\bar X_0)}|(x_n^\al\D_{x'}u,U)|^pd\mu_1\right)^{1/p}\le C\left(\dashint_{Q_{5r}^+(X_0)}|x_n^\al\D u|d\mu_1+\|\bg\|_\infty\right).
\end{align*}
This, together with Lemma~\ref{lem:bdry-est-Diri} and inclusions $Q_\rho^+(X_0)\subset Q_{3\rho}^+(\bar X_0)\subset Q^+_{3r}(\bar X_0)\subset Q_{5r}^+(X_0)$, yields
\begin{align*}
    \psi(X_0,\rho)&\le C\psi(\bar X_0,3\rho)\\
    &\le C(\rho/r)^{\be'}\psi(\bar X_0,3r)+C\left(\|x_n^\al\D u\|_{L^\infty(Q_{6r}^+(\bar X_0))}\tilde\eta_A(6\rho)+\tilde\eta_\bg(6\rho)\right)\\
    &\le C(\rho/r)^{\be'}\left(\dashint_{Q_{5r}^+(X_0)}|x_n^\al\D u|d\mu_1+\|\bg\|_\infty\right)\\
    &\qquad+C\left(\|x_n^\al\D u\|_{L^\infty(Q_{6r}^+(\bar X_0))}\tilde\eta_A(6\rho)+\tilde\eta_\bg(6\rho)\right).
\end{align*}
Since $\tilde\eta_\bullet\le\hat\eta_\bullet$ and $t\longmapsto\frac{\hat\eta_\bullet(t)}{t^{\be'}}$ is nonincreasing, \eqref{eq:psi-est-Diri} follows.

\medskip\noindent\emph{Case 3.} We consider the last case $\rho\le d/2\le r$. We recall $x_n/d\in(1/2,3/2)$ for any $X\in Q_{d/2}(X_0)$ to have
\begin{align*}
    \psi(X_0,d/2)&\le C\left(\dashint_{Q_{d/2}(X_0)}|x_n^\al\D_{x'}u|^pd\mu_1\right)^{1/p}+C\inf_{q\in\R^m}\left(\dashint_{Q_{d/2}(X_0)}|U-q|^pd\mu_1\right)^{1/p}\\
    &\le C\psi(\bar X_0,3d/2).
\end{align*}
This, along with Lemmas~\ref{lem:bdry-est-Diri} and \ref{lem:int-est-Diri} and the monotonicity of $\rho\longmapsto\frac{\hat\eta_\bullet(\rho)}{\rho^{\be'}}$, gives
\begin{align*}
    \psi(X_0,\rho)&\le C(\rho/d)^{\be'}\psi(X_0,d/2)+C(\|x_n^\al\D u\|_{L^\infty(Q_{d/2}(X_0))}\tilde\eta_A(\rho)+\tilde\eta_\bg(\rho))\\
    &\le C(\rho/d)^{\be'}\psi(\bar X_0,3d/2)+C(\|x_n^\al\D u\|_{L^\infty(Q_{d/2}(X_0))}\tilde\eta_A(\rho)+\tilde\eta_\bg(\rho))\\
    &\le C(\rho/r)^{\be'}\psi(\bar X_0,3r)+C(\|x_n^\al\D u\|_{L^\infty(Q_{d/2}(X_0))}\tilde\eta_A(\rho)+\tilde\eta_\bg(\rho))\\
    &\qquad+C(\rho/d)^{\be'}\left(\|x_n^\al\D u\|_{L^\infty(Q^+_{6r}(\bar X_0))}\tilde\eta_A(3d)+\tilde\eta_\bg(3d)\right)\\
    &\le C(\rho/r)^{\be'}\left(\dashint_{Q_{5r}^+(X_0)}|x_n^\al\D u|d\mu_1+\|\bg\|_\infty\right)\\
    &\qquad+C\left(\|x_n^\al\D u\|_{L^\infty(Q^+_{8r}(X_0))}\hat\eta_A(\rho)+\hat\eta_\bg(\rho)\right).
\end{align*}
This completes the proof.
\end{proof}

For $X_0\in Q_{1/2}^+$, $d=d_{X_0}$ and $0<r<1/16$, we take a constant vector $\bq_{X_0,r}\in\R^{m\times n}$ such that
\begin{align}\label{eq:psi-q}
    \psi(X_0,r)=\begin{cases}\left(\dashint_{Q_r(X_0)}|(d^\al\D_{x'}u,U)-\bq_{X_0,r}|^pd\mu_1\right)^{1/p},&0<r\le d/2,\\
        \left(\dashint_{Q_r^+(X_0)}|(x_n^\al\D_{x'}u,U)-\bq_{X_0,r}|^pd\mu_1\right)^{1/p},&d/2<r<1/16,
    \end{cases}
\end{align}
where $\bq_{X_0,r}$ is of the form $(0,\ldots,0,q_{X_0,r})$ for some $q_{X_0,r}\in\R^m$ when $d/2<r<1/16$.

\begin{lemma}
    \label{lem:gradient-unif-est-Diri}
    It holds that
    \begin{align}\label{eq:gradient-unif-est-1-Diri}
        \|x_n^\al\D u\|_{L^\infty(Q_{1/2}^+)}\lesssim \int_{Q_1^+}|\uu|d\mu_1+\int_0^1\frac{\hat\eta_\bg(\rho)}\rho d\rho+\|\bg\|_{\infty}.
    \end{align}
\end{lemma}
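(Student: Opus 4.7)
The plan is to combine Lemma~\ref{lem:psi-est-Diri} with a Campanato iteration and a two-layer absorption argument. Fix $X_0 \in Q_{1/2}^+$ and $r \in (0, 1/16)$. Iterating \eqref{eq:psi-est-Diri} along the geometric sequence $r_j=\ka^j r$ via standard telescoping of the minimizers $\bq_{X_0,r_j}$ from \eqref{eq:psi-q}, using the almost-monotonicity \eqref{eq:alm-mon} and the summability $\sum_j\ka^{j\be'}<\infty$, one obtains at each Lebesgue point $X_0$,
\begin{align*}
    |(x_n^\al \D_{x'}u, U)(X_0)| &\lesssim \dashint_{Q_{8r}^+(X_0)}|x_n^\al\D u|\,d\mu_1 + \|\bg\|_\infty \\
    &\quad + \|x_n^\al\D u\|_{L^\infty(Q_{8r}^+(X_0))}\int_0^{r}\frac{\hat\eta_A(\rho)}{\rho}\,d\rho + \int_0^{r}\frac{\hat\eta_\bg(\rho)}{\rho}\,d\rho.
\end{align*}
Inverting $U = x_n^\al A^{n\de}D_\de u - g_n$ via the ellipticity $A^{nn} \ge \la I$ yields $|x_n^\al D_n u| \lesssim |U| + |x_n^\al \D_{x'}u| + |\bg|$, so the same pointwise bound controls all of $x_n^\al \D u(X_0)$.

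Writing $M(\sigma):=\|x_n^\al\D u\|_{L^\infty(Q_\sigma^+)}$ (a priori finite under the smoothness assumption on $A,\bg$), I choose $r^*>0$ with $C\int_0^{r^*}\hat\eta_A(\rho)/\rho\,d\rho \le 1/2$ and take the supremum over $X_0 \in Q_\sigma^+$ at scale $r=r^*$ (with $\sigma+8r^*\le \sigma'$):
\begin{align*}
M(\sigma) \le \tfrac{1}{2}M(\sigma') + C_{r^*} \|x_n^\al\D u\|_{L^1(Q_{\sigma'}^+,d\mu_1)} + \|\bg\|_\infty + \int_0^1 \frac{\hat\eta_\bg(\rho)}{\rho}\,d\rho.
\end{align*}
Separately, a weighted Caccioppoli inequality for \eqref{eq:pde} (test with $u\zeta^2$ for a cutoff $\zeta\in C_c^\infty(Q_{\sigma''}^+)$ with $\zeta\equiv 1$ on $Q_{\sigma'}^+$, then use ellipticity) gives $\int_{Q_{\sigma'}^+}x_n^\al|\D u|^2\,dX \lesssim (\sigma''-\sigma')^{-2}\int_{Q_{\sigma''}^+}x_n^\al|u|^2\,dX + \|\bg\|_\infty^2$. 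The fundamental theorem applied in $x_n$ (using $u=0$ on $Q_1'$ and $\al<1$) yields $\|u_\al\|_{L^\infty(Q_{\sigma''}^+)} \lesssim M(\sigma'')$, so $\int x_n^\al|u|^2\,dX = \int|u_\al|^2\,d\mu_1 \le \|u_\al\|_\infty\int|u_\al|\,d\mu_1 \lesssim M(\sigma'')\int|u_\al|\,d\mu_1$. Cauchy--Schwarz against $x_n^{-\al}$ (integrable since $\al<1$) and Young's inequality then produce, for any $\e>0$,
\begin{align*}
\|x_n^\al\D u\|_{L^1(Q_{\sigma'}^+,d\mu_1)} \le \e M(\sigma'') + C_{\e,\sigma''-\sigma'} \Big(\int_{Q_1^+}|u_\al|\,d\mu_1 + \|\bg\|_\infty\Big).
\end{align*}

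Plugging this bound into the Campanato estimate delivers
\begin{align*}
M(\sigma) \le \tfrac{1}{2}M(\sigma') + C_{r^*}\e\, M(\sigma'') + \widetilde C_\e\Big(\int_{Q_1^+}|u_\al|\,d\mu_1 + \|\bg\|_\infty + \int_0^1\frac{\hat\eta_\bg(\rho)}{\rho}\,d\rho\Big)
\end{align*}
for every $1/2 \le \sigma < \sigma' < \sigma'' \le 1$ with $\sigma'-\sigma\le 8r^*$. Choosing $\e$ so that $C_{r^*}\e \le 1/4$, the aggregate coefficient of the $M$-terms is at most $3/4$, and Campanato's iteration lemma (applied on $[1/2,1]$ with subdivisions for gaps exceeding $8r^*$) absorbs all $M$-factors on the right, yielding the claimed bound.

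The main obstacle is orchestrating this simultaneous absorption: both the Campanato decay and the Caccioppoli energy estimate introduce $L^\infty$-factors of $x_n^\al\D u$ on slightly enlarged cylinders, and they must be jointly absorbed via interleaved scale iterations with an overall contraction coefficient strictly less than one. The constant in the Caccioppoli step depends on $\sigma''-\sigma'$, so one must balance this against the Campanato gap $\sigma'-\sigma \le 8r^*$ when running the iteration.
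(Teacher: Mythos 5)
Your proposal is correct and follows essentially the same route as the paper: telescoping the minimizers $\bq_{X_0,2^{-j}r}$ via Lemma~\ref{lem:psi-est-Diri} to get a pointwise bound, absorbing the $\int_0^{r}\hat\eta_A(\rho)/\rho\,d\rho$ term by fixing a small scale, and then reducing the $L^1(d\mu_1)$-norm of $x_n^\al\D u$ to $\int_{Q_1^+}|\uu|\,d\mu_1$ through the Caccioppoli inequality, the bound $|\uu|\lesssim \|x_n^\al\D u\|_{L^\infty}x_n$, and Young's inequality. The one point your ``standard telescoping'' glosses over is the transition scale $2^{-j_0}r\approx d_{X_0}/2$, where $\psi$ changes form (full versus half cylinders, $d_{X_0}^\al$ versus $x_n^\al$, and restricted minimizers $\bq=(0,\dots,0,q)$), which produces an extra term $\bigl(\dashint|x_n^\al\D_{x'}u|^p\,d\mu_1\bigr)^{1/p}$ that must be controlled by the boundary estimate of Lemma~\ref{lem:bdry-est-Diri}; the paper handles this explicitly, and it does not alter the final bound.
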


\begin{proof}
We divide the proof into two steps.

\medskip\noindent\emph{Step 1.} In this first step, we show that for almost every $X_0\in Q_{1/2}^+$ and $0<r<1/20$,
\begin{align}
    \label{eq:gradient-unif-est-Diri}\begin{split}
    |(x_0)_n^\al\D u(X_0)|&\lesssim \dashint_{Q_{8r}^+(X_0)}|x_n^\al \D u|d\mu_1+\|\bg\|_\infty\\
    &\qquad+\|x_n^\al\D u\|_{L^\infty(Q_{10r}^+(X_0))}\int_0^r\frac{\hat\eta_A(\rho)}{\rho}d\rho+\int_0^r\frac{\hat\eta_\bg(\rho)}\rho d\rho.
\end{split}\end{align}
To prove \eqref{eq:gradient-unif-est-Diri}, we consider two cases either $0<r\le d/2$ or $d/2<r\le 1/20$. 

\medskip\noindent\emph{Case 1.} We first consider the case $r\le d/2$. By taking average of the trivial inequality
\begin{align*}
    |\bq_{X_0,r}-\bq_{X_0,r/2}|^p\le |(d^\al\D_{x'}u,U)-\bq_{X_0,r}|^p+|(d^\al\D_{x'}u,U)-\bq_{X_0,r/2}|^p
\end{align*}
over $Q_{r/2}(X_0)$ with respect to $\mu_1$ and taking the $p$th root, we get
$$
|\bq_{X_0,r}-\bq_{X_0,r/2}|\le C(\psi(X_0,r)+\psi(X_0,r/2)).
$$
By iteration, we have for any $k\in\mathbb{N}$,
\begin{align}\label{eq:q-est}
    |\bq_{X_0,2^{-k}r}-\bq_{X_0,r}|\le C\sum_{j=0}^k\psi(X_0,2^{-j}r).
\end{align}
On the other hand, from \eqref{eq:psi-est-Diri} we infer $\psi(X_0,2^{-k}r)\to0$ as $k\to\infty$. Then, by using the smoothness assumption on $A$ and $\bg$ and applying the Lebesgue differentiation theorem, we obtain that for almost every $X_0\in Q^+_{1/2}$,
$$
\lim_{k\to\infty}\bq_{X_0,2^{-k}r}=((x_0)_n^\al\D_{x'}u(X_0),U(X_0)).
$$
By combining this with \eqref{eq:q-est} and applying Lemma~\ref{lem:psi-est-Diri}, we get
\begin{align}
    \label{eq:u-q-diff-est}\begin{split}
    &|((x_0)_n^\al\D_{x'}u(X_0),U(X_0))-\bq_{X_0,r}|\\
    &\lesssim \left(\dashint_{Q_{8r}^+(X_0)}|x_n^\al\D u|d\mu_1+\|\bg\|_\infty\right)
    +\|x_n^\al\D u\|_{L^\infty(Q_{8r}^+(X_0))}\int_0^r\frac{\hat\eta_A(\rho)}\rho d\rho+\int_0^r\frac{\hat\eta_\bg(\rho)}\rho d\rho.
\end{split}\end{align}
To bound $|\bq_{X_0,r}|$, we observe that in $Q_r(X_0)$,
$$
|\bq_{X_0,r}|^p\le |(d^\al\D_{x'}u,U)-\bq_{X_0,r}|^p+C\left(|x_n^\al\D u|^p+|\bg|^p\right).
$$
This, along with \eqref{eq:psi-bound}, gives
\begin{align*}
    |\bq_{X_0,r}|&\le C\psi(X_0,r)+C\left(\dashint_{Q_r(X_0)}|x_n^\al\D u|^pd\mu_1\right)^{1/p}+C\|\bg\|_\infty\\
    &\le C\dashint_{Q_r(X_0)}|x_n^\al \D u|d\mu_1+C\|\bg\|_\infty.
\end{align*}
By combining this with \eqref{eq:u-q-diff-est}  and the inequality 
$$
|(x_0)_n^\al\D u(X_0)|\le C\left(|((x_0)_n^\al\D_{x'}u(X_0),U(X_0))|+\|\bg\|_\infty\right),
$$
we deduce \eqref{eq:gradient-unif-est-Diri}.

\medskip\noindent\emph{Case 2.} Suppose $d/2<r<1/20$. We take an integer $j_0\ge0$ such that $2^{-(j_0+1)}r\le d/2<2^{-j_0}r$. By arguing as in Case 1, we obtain that for any $j\ge0$ with $j\neq j_0$,
$$
|\bq_{X_0,2^{-j}r}-\bq_{X_0,2^{-(j+1)}r}|\le C\left(\psi(X_0,2^{-j}r)+\psi(X_0,2^{-(j+1)}r)\right).
$$
Notice that the case when $j=j_0$ is nontrivial due to the different structures of $\psi(X_0,r)$ depending on whether $r\le d/2$ or $r>d/2$. By iterating, we have for any $k\ge j_0+1$,
\begin{align}
    \label{eq:q-est-1}|\bq_{X_0,r}-\bq_{X_0,2^{-k}r}|\le |\bq_{X_0,2^{-j_0}r}-\bq_{X_0,2^{-(j_0+1)}r}|+C\sum_{j=0}^k\psi(X_0,2^{-j}r).
\end{align}
To estimate the first term in the right-hand side, we note that in $Q_{2^{-(j_0+1)}r}(X_0)$,
\begin{multline*}
    |\bq_{X_0,2^{-j_0}r}-\bq_{X_0,2^{-(j_0+1)}r}|^p\le |(d^\al\D_{x'}u,U)-\bq_{X_0,2^{-(j_0+1)}r}|^p\\
    +|(x_n^\al\D_{x'}u,U)-\bq_{X_0,2^{-j_0}r}|^p+|d^\al\D_{x'}u|^p+|x_n^\al\D_{x'}u|^p.
\end{multline*}
Since $x_n/d\in(1/2,3/2)$ in $Q_{2^{-(j_0+1)}r}(X_0)\subset Q_{d/2}(X_0)$, this inequality yields
\begin{align}
    \label{eq:q-est-2}\begin{split}
        |\bq_{X_0,2^{-j_0}r}-\bq_{X_0,2^{-(j_0+1)}r}|&\lesssim \psi(X_0,2^{-(j_0+1)}r)+\psi(X_0,2^{-j_0}r)\\
        &\qquad+\left(\dashint_{Q_{2^{-(j_0+1)}r}(X_0)}|x_n^\al\D_{x'}u|^pd\mu_1\right)^{1/p}.
    \end{split}
\end{align}
Moreover, from $2^{-(j_0+1)}r\le d/2<2^{-j_0}r$, we have $2^{-(j_0+1)}r+d\le \frac32d<2^{2-j_0}r$, thus $Q_{2^{-(j_0+1)}r}(X_0)\subset Q_{2^{2-j_0}r}(\bar X_0)$. Then we have by applying Lemma~\ref{lem:bdry-est-Diri} that
\begin{align*}
    &\left(\dashint_{Q_{2^{-(j_0+1)}r}(X_0)}|x_n^\al\D_{x'}u|^pd\mu_1\right)^{1/p}\\
    &\lesssim \left(\dashint_{Q_{2^{2-j_0}r}(\bar X_0)}|x_n^\al\D_{x'}u|^pd\mu_1\right)^{1/p}\lesssim \psi(\bar X_0,2^{2-j_0}r)\\
    &\lesssim \psi(\bar X_0,4r)+\|x_n^\al\D u\|_{L^\infty(Q^+_{8r}(\bar X_0))}\tilde\eta_A(2^{3-j_0}r)+\tilde\eta_\bg(2^{3-j_0}r)\\
    &\lesssim \psi(X_0,8r)+\|x_n^\al\D u\|_{L^\infty(Q^+_{8r}(\bar X_0))}\tilde\eta_A(2^{3-j_0}r)+\tilde\eta_\bg(2^{3-j_0}r).
\end{align*}
By combining this with \eqref{eq:q-est-1} and \eqref{eq:q-est-2} and applying Lemma~\ref{lem:psi-est-Diri},
\begin{align}
    \label{eq:q-est-3}\begin{split}
        &|\bq_{X_0,r}-\bq_{X_0,2^{-k}r}|\\
        &\lesssim \sum_{j=-3}^\infty\psi(X_0,2^{-j}r)+\|x_n^\al\D u\|_{L^\infty(Q_{8r}^+(\bar X_0))}\tilde\eta_A(2^{3-j_0}r)+\tilde\eta_\bg(2^{3-j_0}r)\\
        &\lesssim \dashint_{Q_{8r}^+(X_0)}|x_n^\al\D u|d\mu_1+\|\bg\|_\infty+\|x_n^\al\D u\|_{L^\infty(Q^+_{10r}(X_0))}\int_0^r\frac{\hat\eta_A(\rho)}\rho d\rho+\int_0^r\frac{\hat\eta_\bg(\rho)}\rho d\rho.
    \end{split}
\end{align}
Since the estimate $|\bq_{X_0,r}|\le C\left(\dashint_{Q_r^+(X_0)}|x_n^\al\D u|d\mu_1+\|\bg\|_\infty\right)$ can be derived by arguing as in Case 1, \eqref{eq:gradient-unif-est-Diri} follows by taking $k\to\infty$ in \eqref{eq:q-est-3}.

\medskip\noindent\emph{Step 2.} Once we have \eqref{eq:gradient-unif-est-Diri} obtained in Step 1, we can follow the same argument in Step 2 in the proof of \cite{DonJeoVit23}*{Lemma~2.8} to obtain
\begin{align}\label{eq:gradient-est-infty}
 \|x_n^\al\D u\|_{L^\infty(Q_{1/2}^+)}\lesssim \int_{Q_{3/4}^+}|x_n^\al \D u|d\mu_1+\int_0^1\frac{\hat\eta_\bg(\rho)}\rho d\rho+\|\bg\|_{\infty}.
\end{align}
In addition, by testing \eqref{eq:pde} by $u\xi^2$, where $\xi\in C^\infty(Q_1^+)$ satisfying $\xi=1$ in $Q^+_{1/2}$ and $\supp\xi\subset Q^+_{3/4}$, we get the Caccioppoli inequality
\begin{align}
    \label{eq:Caccio-ineq}
    \int_{Q_{1/2}^+}|x_n^\al\D u|^2d\mu_1\lesssim \int_{Q_{3/4}^+}|\uu|^2d\mu_1+\|\bg\|_\infty.
\end{align}
For any $X\in Q_{1/2}^+$, since $\al<1$, we have by \eqref{eq:gradient-est-infty} that
\begin{align}\label{eq:u-L^infty-bound}\begin{split}
    |\uu(X)|&=\left|x_n^{\al+1}\int_0^1D_nu(sx_n)\,ds \right|\lesssim x_n\int_0^1\|x_n^{\al}\D u\|_{L^\infty(Q_{1/2}^+)}s^{-\al}ds\\
    &\lesssim \|x_n^\al \D u\|_{L^\infty(Q_{1/2}^+)}x_n.
\end{split}\end{align}
By combining this with \eqref{eq:gradient-est-infty} and \eqref{eq:Caccio-ineq}, we get
\begin{align*}
    \|\uu\|_{L^\infty(Q_{1/2}^+)}\lesssim \|x_n^\al\D u\|_{L^\infty(Q^+_{1/2})}\lesssim \left(\dashint_{Q_{3/4}^+}|\uu|^2d\mu_1\right)^{1/2}+\int_0^1\frac{\hat\eta_\bg(\rho)}\rho d\rho+\|\bg\|_\infty.
\end{align*}
By using a standard iteration as in \cite{Gia83}*{pages 80-82}, we further have
\begin{align*}
    \|\uu\|_{L^\infty(Q_{1/2}^+)}\lesssim\dashint_{Q_{3/4}^+}|\uu|d\mu_1+\int_0^1\frac{\hat\eta_\bg(\rho)}\rho d\rho+\|\bg\|_\infty.
\end{align*}
This, combined with \eqref{eq:gradient-est-infty} and \eqref{eq:Caccio-ineq}, concludes \eqref{eq:gradient-unif-est-1-Diri}.
\end{proof}

\begin{remark}
    \label{rem:u-alpha-bound}
    Equation \eqref{eq:u-L^infty-bound} implies that $\uu=0$ on $Q'_{1/2}$, which is nontrivial when $\al<0$. Although we do not use this fact in the proof for Schauder type estimates, it will play a significant role in the proof for Boundary Harnack principles; see the proof of Theorem~\ref{thm:par-BHP-deg}.
\end{remark}

In the preceding lemma, we obtained the $L^\infty$-estimate for $x_n^\al\D u$. In the following, we extend this result to the $L^\infty$-estimate for $\D \uu$.

\begin{lemma}
    \label{lem:gradient-bound-Diri}
    There holds
    $$
    \|\D\uu\|_{L^\infty(Q_{1/2}^+)}\lesssim\int_{Q_1^+}|\uu|d\mu_1+\int_0^1\frac{\hat\eta_\bg(\rho)}\rho d\rho+\|\bg\|_{\infty}.
    $$
\end{lemma}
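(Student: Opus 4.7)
My plan is to split $\D\uu$ into its tangential and normal components and treat them separately. For the tangential directions $i<n$, observe that $\partial_i\uu=x_n^\al\partial_iu$, so the $L^\infty$ bound of this part follows immediately from \eqref{eq:gradient-unif-est-1-Diri} of Lemma~\ref{lem:gradient-unif-est-Diri}. The essential work is therefore to control the normal derivative, which by the product rule satisfies
$$
\partial_n\uu=\al x_n^{\al-1}u+x_n^\al\partial_nu.
$$
The second summand is again directly bounded via Lemma~\ref{lem:gradient-unif-est-Diri}, so the whole game is to estimate the first, potentially singular, summand.

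For the singular term $\al x_n^{\al-1}u$, the key is to rewrite it as $\al x_n^{-1}\uu$ and then absorb the factor $x_n^{-1}$ using the boundary-decay estimate \eqref{eq:u-L^infty-bound}, which was obtained inside the proof of Lemma~\ref{lem:gradient-unif-est-Diri} and reads
$$
|\uu(X)|\lesssim x_n\,\|x_n^\al\D u\|_{L^\infty(Q_{1/2}^+)},\qquad X\in Q_{1/2}^+.
$$
Dividing by $x_n$ gives $|\al x_n^{-1}\uu(X)|\lesssim|\al|\,\|x_n^\al\D u\|_{L^\infty(Q_{1/2}^+)}$, and invoking Lemma~\ref{lem:gradient-unif-est-Diri} once more majorizes the right-hand side by the quantity claimed in the statement. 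Assembling the tangential bound, the $x_n^\al\partial_nu$ bound, and this bound for $\al x_n^{-1}\uu$ delivers the desired $L^\infty$ estimate for $\D\uu$.

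I do not expect a substantive obstacle. The whole content of the argument is the observation that the linear boundary vanishing of $\uu$ at $\{x_n=0\}$ already proved in the preceding lemma exactly offsets the $x_n^{\al-1}$ singularity produced when $\partial_n$ falls on the weight $x_n^\al$ in $\uu=x_n^\al u$. The cancellation is uniform across the full range $\al\in(-\infty,1)$, because \eqref{eq:u-L^infty-bound} itself only uses $\int_0^1 s^{-\al}\,ds<\infty$, which is precisely the standing hypothesis $\al<1$.
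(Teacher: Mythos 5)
Your proposal is correct and follows exactly the paper's argument: the identities $D_{x'}\uu=x_n^\al\D_{x'}u$ and $D_n\uu=x_n^\al D_nu+\al\uu/x_n$, the decay estimate \eqref{eq:u-L^infty-bound} to absorb the $1/x_n$ singularity, and Lemma~\ref{lem:gradient-unif-est-Diri} to bound $\|x_n^\al\D u\|_{L^\infty}$. No issues.
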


\begin{proof}
Lemma~\ref{lem:gradient-bound-Diri} follows from the identities $D_{x'}\uu=x_n^\al\D_{x'}u$ and $D_n\uu=x_n^\al D_nu+\al \uu/x_n$, the estimate \eqref{eq:u-L^infty-bound}, and Lemma~\ref{lem:gradient-unif-est-Diri}.
\end{proof}

For $0<\be<1$, we let $\omega_x:[0,1)\to[0,\infty)$ be a modulus of continuity defined by
\begin{align}\label{eq:omega_x}
    \omega_x(r):=\left(\|\uu\|_{L^1(Q_1^+,d\mu_1)}+\|\bg\|_\infty+\int_0^1\frac{\hat\eta_\bg(\rho)}\rho d\rho\right)\left(r^\be+\int_0^r\frac{\hat\eta_A(\rho)}\rho d\rho\right)+\int_0^r\frac{\hat\eta(\rho)}{\rho}d\rho.
\end{align}

Recall $\be'=\frac{\be+1}2$ and notice that the monotonicity of $\rho\longmapsto\frac{\hat\eta_\bullet(\rho)}{\rho^{\be'}}$ implies that of $\rho\longmapsto\frac{\omega_x(\rho)}{\rho^{\be'}}$. Moreover, for any $X_0\in Q^+_{1/2}$ and $0<r<1/16$, we have by Lemmas~\ref{lem:psi-est-Diri} and \ref{lem:gradient-unif-est-Diri} that
\begin{align}
    \label{eq:psi-omega}
    \psi(X_0,r)\le C\omega_x(r).
\end{align}

\begin{lemma}
    \label{lem:u-q-diff-est}
    If $X_0\in Q_{1/2}^+$ and $0<r<1/18$, then we have
    $$
    |((x_0)_n^\al\D_{x'}u(X_0),U(X_0))-\bq_{X_0,r}|\le C\omega_x(r),
    $$
    where $C=C(n,m,\la,\al,p,\be)>0$.
\end{lemma}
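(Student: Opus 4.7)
The plan is to iterate and telescope the differences $\bq_{X_0,2^{-j}r}-\bq_{X_0,2^{-(j+1)}r}$ as in the proof of Lemma~\ref{lem:gradient-unif-est-Diri}, and to pass to the limit $k\to\infty$ via Lebesgue differentiation, which is available since we are deriving an a priori estimate with $A,\bg$ smooth. The new ingredient is a pointwise decay bound on $\psi$ of the form
$$
\psi(X_0,\rho)\le CC_0\,\rho^{\be'}+CC_0\,\hat\eta_A(\rho)+C\hat\eta_\bg(\rho),\qquad 0<\rho<1/16,
$$
where $C_0:=\|\uu\|_{L^1(Q_1^+,d\mu_1)}+\|\bg\|_\infty+\int_0^1\frac{\hat\eta_\bg(\rho)}\rho d\rho$. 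This follows from Lemma~\ref{lem:psi-est-Diri} applied with the outer scale \emph{fixed} at $r_0=1/16$, combined with Lemma~\ref{lem:gradient-unif-est-Diri} to bound the $L^1$-average and $L^\infty$-norm of $x_n^\al\D u$ appearing there by $C_0$.

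For Case~1 ($r\le d_{X_0}/2$), iterating \eqref{eq:q-est} gives $|\bq_{X_0,r}-\bq_{X_0,2^{-k}r}|\le C\sum_{j=0}^k\psi(X_0,2^{-j}r)$, into which I substitute the pointwise bound above. The geometric sum $\sum_j(2^{-j}r)^{\be'}\lesssim r^{\be'}\le r^\be$ (since $\be'>\be$) together with the dyadic-to-integral conversion $\sum_j\hat\eta_\bullet(2^{-j}r)\lesssim\int_0^r\frac{\hat\eta_\bullet(\tau)}\tau d\tau$ via the almost-monotonicity \eqref{eq:alm-mon} yields $\le C\omega_x(r)$. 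For Case~2 ($r>d_{X_0}/2$), I follow the crossover argument leading to \eqref{eq:q-est-3}: iterate using Lemma~\ref{lem:bdry-est-Diri} in the boundary regime, handle the transition index $j_0$ defined by $2^{-(j_0+1)}r\le d_{X_0}/2<2^{-j_0}r$ via \eqref{eq:q-est-2} (which exploits $x_n/d\in(1/2,3/2)$ in the crossover cube), and sum as before. Taking $k\to\infty$ and using $\bq_{X_0,2^{-k}r}\to((x_0)_n^\al\D_{x'}u(X_0),U(X_0))$ by Lebesgue differentiation and smoothness of $u$ completes the proof.

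The principal technical point, and the only real obstacle, is the correct normalization in Lemma~\ref{lem:psi-est-Diri}: fixing the outer scale at $r_0$ rather than letting it equal the statement's $r$ is what turns the leading $(\rho/r_0)^{\be'}$ factor into a summable geometric contribution responsible for the $r^\be$ term in $\omega_x(r)$. Without this choice, using \eqref{eq:psi-omega} directly would leave the sum $\sum_j\omega_x(2^{-j}r)$ uncontrolled by $\omega_x(r)$, since the integral parts of $\omega_x$ do not decay quickly enough across dyadic scales. The crossover handling in Case~2 is then inherited essentially verbatim from the proof of Lemma~\ref{lem:gradient-unif-est-Diri}.
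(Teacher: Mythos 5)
Your proof is correct and is essentially the argument the paper intends: the paper itself only cites the analogous elliptic result (\cite{DonJeoVit23}*{Lemma~2.9}), whose proof is exactly this telescoping of $\bq_{X_0,2^{-j}r}$ across dyadic scales (with the crossover index $j_0$ handled as in \eqref{eq:q-est-2}--\eqref{eq:q-est-3}), combined with the single-scale decay bound $\psi(X_0,\rho)\lesssim C_0\rho^{\be'}+C_0\hat\eta_A(\rho)+\hat\eta_\bg(\rho)$ coming from Lemma~\ref{lem:psi-est-Diri} with the outer radius frozen and Lemma~\ref{lem:gradient-unif-est-Diri}. Your observation that one must sum these single-scale bounds rather than $\sum_j\omega_x(2^{-j}r)$ (which the Dini condition alone does not control) is precisely the point that makes the estimate close, and it matches how \eqref{eq:psi-omega} is itself derived.
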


\begin{proof}
    The proof of Lemma~\ref{lem:u-q-diff-est} is similar to that of \cite{DonJeoVit23}*{Lemma~2.9}.
\end{proof}

Now we are ready to prove Theorem~\ref{thm:reg-Diri}. 

\begin{proof}[Proof of Theorem~\ref{thm:reg-Diri}]
Thanks to Lemma~\ref{lem:gradient-bound-Diri}, it is sufficient to prove the continuity of $x_n^\al\D_{x'}u$ and $U$. To this end, we will show that for any $X_0=(t_0,x_0)$ and $Y_0=(s_0,y_0)\in Q_{1/2}^+$ with $r:=|x_0-y_0|+\sqrt{|t_0-s_0|}>0$,
\begin{align}
    \label{eq:u-reg-Diri}
    |((x_0)_n^\al \D_{x'}u(X_0),U(X_0))-((y_0)_n^\al\D_{x'}u(Y_0),U(Y_0))|\le C\omega_x(r).
\end{align}
Indeed, due to Lemma~\ref{lem:gradient-unif-est-Diri}, we may assume $0<r<1/90$. Moreover, if $r\ge(x_0)_n/8$, then we can follow the argument in Case 1 in the proof of \cite{DonJeoVit23}*{Theorem~2.4} to derive \eqref{eq:u-reg-Diri}. Thus, we may also assume $r<(x_0)_n/8$. By Lemma~\ref{lem:u-q-diff-est} and the triangle inequality, we have
$$
|((x_0)_n^\al\D_{x'}u(X_0),U(X_0))-((y_0)_n^\al\D_{x'}u(Y_0),U(Y_0))|\le C\omega_x(2r)+|\bq_{X_0,2r}-\bq_{Y_0,2r}|.
$$
From $(y_0)_n>(x_0)_n-r>7r$, we see that $2r<\min\{(x_0)_n/2,(y_0)_n/2\}$. From the trivial inequality
\begin{align*}
    |\bq_{X_0,2r}-\bq_{Y_0,2r}|^p&\le |\bq_{X_0,2r}-(d_{X_0}^\al\D_{x'}u,U)|^p+|\bq_{Y_0,2r}-(d^\al_{Y_0}\D_{x'}u,U)|^p\\
    &\qquad+|(d_{X_0}^\al-d_{Y_0}^\al)\D_{x'}u|^p
\end{align*}
in $Q_r(X_0)$, we infer
\begin{align*}
    |\bq_{X_0,2r}-\bq_{Y_0,2r}|&\lesssim\psi(X_0,2r)+\psi(Y_0,2r)+\left(\dashint_{Q_r(X_0)}|(d_{X_0}^\al-d_{Y_0}^\al)\D_{x'}u|^pd\mu_1\right)^{1/p}\\
    &\lesssim \omega_x(r)+\left(\dashint_{Q_r(X_0)}|(d_{X_0}^\al-d_{Y_0}^\al)\D_{x'}u|^pd\mu_1\right)^{1/p},
\end{align*}
where we used \eqref{eq:psi-omega} in the second inequality. Thus, to obtain \eqref{eq:u-reg-Diri}, it suffices to show that
\begin{align}
    \label{eq:u-d-est-Diri}
    \left(\dashint_{Q_r(X_0)}|(d_{X_0}^\al-d_{Y_0}^\al)\D_{x'}u|^pd\mu_1\right)^{1/p}\le C\omega_x(r).
\end{align}
To prove it, note that $7/8<\frac{(y_0)_n}{(x_0)_n}<9/8$ as $(x_0)_n>8r$ and $|(x_0)_n-(y_0)_n|\le r$. Then we have in $Q_r(X_0)$,
$$
|(d_{X_0}^\al-d_{Y_0}^\al)\D_{x'}u|=\left|1-\left(\frac{(y_0)_n}{(x_0)_n}\right)^\al\right||d_{X_0}^\al\D_{x'}u|\le \frac{Cr}{(x_0)_n}|d_{X_0}^\al\D_{x'}u|,
$$
and thus
\begin{align}\label{eq:d-diff-est}
\left(\dashint_{Q_r(X_0)}|(d_{X_0}^\al-d_{Y_0}^\al)\D_{x'}u|^pd\mu_1\right)^{1/p}\le \frac{Cr}{(x_0)_n}\left(\dashint_{Q_r(X_0)}|d_{X_0}^\al\D_{x'}u|^pd\mu_1\right)^{1/p}.
\end{align}
To estimate the right-hand side in \eqref{eq:d-diff-est}, we write $d=d_{X_0}$ and take $k\in\mathbb{N}$ such that $d/8<2^kr\le d/4$. For each $0\le j\le k-1$, we note that $2^{j+1}r\le d/4$ and let $\bq_{X_0,2^{j+1}r}=(\bq'_{X_0,2^{j+1}r},(\bq_{X_0,2^{j+1}r})_n)\in\R^{m\times n}$ be as in \eqref{eq:psi-q}. Then
\begin{align*}
    &\dashint_{Q_{2^jr}(X_0)}|d^\al\D_{x'}u|^pd\mu_1\\
    &=\dashint_{Q_{2^jr}(X_0)}\dashint_{Q_{2^{j+1}r}(X_0)}|d^\al\D_{x'}u(Y)+(d^\al\D_{x'}u(X)-d^\al\D_{x'}u(Y))|^pd\mu_1(Y)d\mu_1(X)\\
    &\le \dashint_{Q_{2^jr}(X_0)}\dashint_{Q_{2^{j+1}r}(X_0)}|d^\al\D_{x'}u(Y)|^pd\mu_1(Y)d\mu_1(X)\\
    &\qquad+C\dashint_{Q_{2^{j+1}r}(X_0)}\dashint_{Q_{2^{j+1}r}(X_0)}|d^\al\D_{x'}u(X)-d^\al\D_{x'}u(Y)|^pd\mu_1(Y)d\mu_1(X)\\
    &\le \dashint_{Q_{2^{j+1}r}(X_0)}|d^\al\D_{x'}u|^pd\mu_1+C\dashint_{Q_{2^{j+1}r}(X_0)}|d^\al\D_{x'}u-\bq'_{X_0,2^{j+1}r}|^pd\mu_1\\
    &\le \dashint_{Q_{2^{j+1}r}(X_0)}|d^\al\D_{x'}u|^pd\mu_1+C(\omega_x(2^{j+1}r))^p,
\end{align*}
where we used \eqref{eq:psi-omega} in the last step. Summing up this estimate over $0\le j\le k-1$ yields
\begin{align*}
    \dashint_{Q_r(X_0)}|d^\al\D_{x'}u|^pd\mu_1\le \dashint_{Q_{2^kr}(X_0)}|d^\al\D_{x'}u|^pd\mu_1+C\sum_{j=1}^k(\omega_x(2^jr))^p.
\end{align*}
By using Hölder's inequality, $d/8<2^kr\le d/4$ and \eqref{eq:psi-omega}, we further have
\begin{align*}
    &\left(\dashint_{Q_r(X_0)}|d^\al\D_{x'}u|^pd\mu_1\right)^{1/p}\lesssim \left(\dashint_{Q_{2^kr}(X_0)}|x_n^\al\D_{x'}u|^pd\mu_1\right)^{1/p}+k^{\frac{1-p}p}\sum_{j=1}^k\omega_x(2^jr)\\
    &\lesssim \left(\dashint_{Q_{2^{k+2}r}(X_0)}|x_n^\al\D_{x'}u|^pd\mu_1\right)^{1/p}+k^{\frac{1-p}p}\sum_{j=1}^k\omega_x(2^jr)\\
    &\lesssim \omega_x(2^{k+2}r)+k^{\frac{1-p}p}\sum_{j=1}^k\omega_x(2^jr)\lesssim k^{\frac{1-p}p}\sum_{j=1}^{k+2}\omega_x(2^jr)\lesssim k^{\frac{1-p}p}\sum_{j=1}^{k+2}(2^{j\be'}\omega_x(r))\\
    &\lesssim k^{\frac{1-p}p}2^{k\be'}\omega_x(r)\lesssim 2^k\omega_x(r)\lesssim \frac{d}r\omega_x(r).
\end{align*}
Combining this with \eqref{eq:d-diff-est} gives \eqref{eq:u-d-est-Diri}, and the proof is complete.
\end{proof}

\subsection{The regularity in time}\label{subsec:reg-time}
In this section, we write for simplicity
$$
d\mu_1=x_n^{-\al^+}dX,\qquad \eta_A=\eta_A^{-\al^+},\qquad \eta_\bg=\eta_\bg^{-\al^+}.
$$
Recall that we used the same notation in Section~\ref{subsec:Schauder-space}, but with $-\al$ in the place of $-\al^+$; see \eqref{eq:not}. We fix a constant $0<\be<1$ and recall $\be'=\frac{\be+1}2$. We introduce several Dini functions that will be used in this section. We simply write $\|x_n^\al\D u\|_{L^\infty(Q_{1/2}^+)}=\|x_n^\al\D u\|_{\infty}$, $\|\uu\|_{L^1(Q_1^+,d\mu_1)}=\|\uu\|_{1,\mu_1}$ and $\|\bg\|_{L^\infty(Q_1^+)}=\|\bg\|_{\infty}$. Set
\begin{align}
    \label{eq:Dini-tilde}\begin{split}
    &\eta(r):=(\|x_n^\al\D u\|_{\infty}+\|\bg\|_{\infty})\eta_A(r)+\eta_\bg(r),\\
    &\tilde\eta(r):=\left(\|\uu\|_{1,\mu_1}+\|\bg\|_{\infty}+\int_0^1\frac{\eta_\bg(\rho)}\rho d\rho\right)(\tilde\eta_A(r)+r^{\be'})+\tilde\eta_\bg(r),\\
    &\hat\eta(r):=\left(\|\uu\|_{1,\mu_1}+\|\bg\|_{\infty}+\int_0^1\frac{\eta_\bg(\rho)}\rho d\rho\right)(\hat\eta_A(r)+r^{\be'})+\hat\eta_\bg(r),\\
    &\bar\eta(r):=\sum_{k=0}^\infty\frac1{2^k}\hat\eta(r/2^k),
\end{split}\end{align}
where $\tilde\eta_\bullet$ and $\hat\eta_\bullet$ are as in \eqref{eq:tilde-eta} and \eqref{eq:hat-eta}, respectively. Note that $\eta\le\tilde\eta\le\hat\eta\le\bar\eta$. Finally, we let
\begin{align}\label{eq:Dini-omega-time}
\omega_t(r):=\bar\eta(r^{1/2}).
\end{align}
Due to Lemma~\ref{lem:appen}, it is easily seen that for $\sigma_0$ as in \eqref{eq:mod-conti-sigma},
$$
\omega_t(r)\lesssim \sigma_0(r^{1/2}).
$$

The main objective in this section is to prove the following regularity result for $\uu$ in time, which will play a crucial role the the higher-order Schauder type estimates in Section~\ref{sec:HO}.

\begin{theorem}\label{thm:reg-time}
    Let $u$, $A$ and $\bg$ be as in Theorem~\ref{thm:reg-Diri}, and $0<\be<1$. Then, for any $0<h<1/4$ and $(t,x)\in Q_{1/2}^+$,
    \begin{align}\label{eq:reg-time}
    \frac{|\delta_{t,h}\uu(t,x)|}{h^{1/2}}\lesssim \omega_t(h).
    \end{align}
\end{theorem}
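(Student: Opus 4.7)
The plan is to argue by approximation: first assume $A$ and $\bg$ are smooth so that all pointwise manipulations are justified, and then pass to the limit. Fix $(t_0, x_0) \in Q_{1/2}^+$ and $0 < h < 1/4$, and set $r := h^{1/2}$. I would split into an interior case $(x_0)_n \ge 4r$ and a boundary case $(x_0)_n < 4r$.

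For the interior case, I would pick a symmetric smooth bump $\phi \in C_c^\infty(D_r(x_0))$ with $\int \phi \, dx = 1$, set $\bar\uu(t) := \int \uu(t,\cdot)\phi\,dx$, and use the decomposition
\[
\delta_{t,h}\uu(t_0, x_0) = [\uu(t_0, x_0) - \bar\uu(t_0)] + [\bar\uu(t_0) - \bar\uu(t_0 - h)] + [\bar\uu(t_0 - h) - \uu(t_0 - h, x_0)].
\]
The middle term is controlled by testing \eqref{eq:pde} against $\phi$ in space and integrating on $[t_0 - h, t_0]$:
\[
\bar\uu(t_0) - \bar\uu(t_0-h) = -\int_{t_0-h}^{t_0}\int (x_n^\al A^{\g\de}D_\de u - g_\g)\,D_\g\phi \, dx \, dt.
\]
Since $\int D_\g\phi \, dx = 0$, I may subtract the value at $x = x_0$ inside the integrand at each time slice and then exploit the spatial moduli of $U = x_n^\al A^{n\de}D_\de u - g_n$ and $\D_{x'}\uu = x_n^\al \D_{x'}u$ provided by Theorem~\ref{thm:reg-Diri}, together with the algebraic identity $x_n^\al A^{nn}D_n u = U + g_n - \sum_{\de<n}A^{n\de}D_\de\uu$, which recovers a spatial modulus for $x_n^\al D_n u$ in the interior. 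Combined with $\|D\phi\|_{L^1} \lesssim r^{-1}$ and the identification $r=h^{1/2}$, this yields a bound of the desired form. The spatial endpoint terms are handled similarly: the symmetry of $\phi$ removes the first-order Taylor term, and the spatial moduli of $\D\uu$ give a bound in terms of $r$ times the relevant modulus.

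For the boundary case $(x_0)_n < 4r$, the naive Lipschitz-in-space bound $|\delta_{t,h}\uu(t_0, x_0)| \le 2\|\D\uu\|_\infty (x_0)_n \lesssim r$ only gives $|\delta_{t,h}\uu|/h^{1/2} \lesssim 1$, which is too weak since the required modulus $\omega_t(h)$ vanishes as $h \to 0^+$. To sharpen this, I would use that $\uu(t, x_0', 0) = 0$ by Remark~\ref{rem:u-alpha-bound}, so $\uu(t_0, x_0) = \int_0^{(x_0)_n}D_n\uu(t_0, x_0', s)\,ds$, and apply a dyadic version of the interior estimate to $D_n\uu$ at depths $2^{-k}r$. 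Summing these contributions weighted by $2^{-k}$ reproduces exactly the dyadic Dini sum $\bar\eta(r) = \sum_k 2^{-k}\hat\eta(r/2^k)$ appearing in $\omega_t$. Finally, to match the spatial modulus $\omega_x$ from the interior argument to the target $\omega_t(h) = \bar\eta(h^{1/2})$, I would use Lemma~\ref{lem:appen} to convert between Dini integrals and dyadic Dini sums, together with Lemma~\ref{lem:partial-DMO-weight-rela} to convert between the $x_n^{-\al}$ weight used in Section~\ref{subsec:Schauder-space} and the $x_n^{-\al^+}$ weight relevant here (a nontrivial switch when $\al < 0$).

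I expect the main obstacle to be producing exactly the Dini-sharp modulus $\omega_t$ rather than a coarser Dini-integrated modulus, and doing so uniformly up to $\{x_n=0\}$. The testing argument naturally yields bounds involving the integrated spatial modulus $\omega_x$ of the coefficients' induced quantities, while the target $\omega_t$ is a dyadic sum of $\hat\eta$, potentially finer. Reconciling these two through Lemma~\ref{lem:appen}, keeping track of the correct constants $C_1 = \|\uu\|_{L^1(Q_1^+,d\mu_1)} + \|\bg\|_\infty + \int_0^1 \eta_\bg(\rho)/\rho\,d\rho$ in \eqref{eq:Dini-tilde}, and performing the boundary dyadic summation while controlling the weight $x_n^\al$ near $\{x_n = 0\}$, are the technical crux of the proof and parallel the layered analysis carried out in Section~\ref{subsec:Schauder-space}.
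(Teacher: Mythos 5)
Your plan founders on a point that the partial-DMO hypothesis is specifically designed to expose: under the assumption $A,\bg\in\cH_{-\al}$, the coefficients are only controlled in mean oscillation with respect to $X'=(t,x')$ and may be arbitrarily rough (merely bounded measurable) in $x_n$. Consequently the identity $x_n^\al A^{nn}D_n u = U + g_n - \sum_{\de<n}A^{n\de}D_\de\uu$ does \emph{not} ``recover a spatial modulus for $x_n^\al D_n u$'': the right-hand side contains $ (A^{nn})^{-1}$ and $g_n$, which have no modulus of continuity in the $x_n$-direction, and indeed the paper explicitly notes that $D_n\uu$ need not be continuous in $x_n$ (only $x_n^\al\D_{x'}u$ and $U$ are continuous). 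This kills your interior endpoint terms: $|\uu(t_0,x_0)-\bar\uu(t_0)|$ is governed by the oscillation of $\D\uu$ over $D_r(x_0)$, whose normal component is $O(1)$, so you only get $O(r)=O(h^{1/2})$ — precisely the ``too weak'' bound you yourself flag in the boundary case. The boundary step inherits the same defect, since it again rests on regularity of $D_n\uu$. (Your treatment of the middle term, testing the equation against $\phi$ and using that only $L^1$-in-$x'$ oscillation of the tangential flux and the pointwise continuity of $U$ are needed, is sound; the failure is confined to the comparison of the pointwise value with the spatial average.)

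The paper's proof is structured exactly to avoid this obstruction, and is genuinely different from your route: instead of comparing $\uu$ with averages or Taylor polynomials, it compares $x_n^\al(u-u_0^{X_0,r})$ with adapted approximants from the families $\bA_1^{X_0,r}$, $\bA_2^{X_0,r}$ in \eqref{eq:phi-def}, whose normal profiles $\int^{x_n} s^{-\al}((\bar A^{nn})^{X_0,r}(s))^{-1}(\cdots)\,ds$ carry the rough $x_n$-dependence of the averaged coefficients. A Campanato iteration (Lemmas~\ref{lem:vp-bdry-est-Diri}--\ref{lem:vp-est-Diri}) gives $\vp(X_0,r)\lesssim r\hat\eta(r)$, whence $|\uu(X_0)-\ell_0^{X_0,r}|\lesssim r\bar\eta(r)$ (Lemma~\ref{lem:diff-est}); the time increment is then controlled by comparing the zeroth-order coefficients $\ell_0^{X_0,2r}$ and $\ell_0^{X_1,2r}$ at the two centers $X_0=(t_0,x_0)$, $X_1=(t_0-r^2,x_0)$ with $r=h^{1/2}$. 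If you want to salvage a testing-type argument, you would have to replace your spatial average $\bar\uu(t)$ by a quantity built from these adapted approximants, at which point you have essentially reconstructed the paper's proof.
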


For $X_0\in \overline{Q_{1/2}^+}$, $0<r<1/8$, and $1\le \g,\de\le n$, we write
$$
\bar g_\g^{X_0,r}(x_n):=\dashint_{Q'_{2r}(X'_0)}g_\g(Y',x_n)dY',\quad (\bar A^{\g\de})^{X_0,r}(x_n):=\dashint_{Q'_{2r}(X'_0)}A^{\g\de}(Y',x_n)dY'.
$$
As before, when there is no confusion, we write for simplicity
$$
d=(x_0)_n.
$$
We define
\begin{align}\label{eq:u_0}
u_0^{X_0,r}(x_n):=\int_d^{x_n}s^{-\al}((\bar A^{nn})^{X_0,r}(s))^{-1}\bar g_n^{X_0,r}(s)\,ds.
\end{align}
When $r>d/4$, we denote
$$
\bA_1^{X_0,r}:=\left\{q(x_n)=\int_0^ds^{-\al}((\bar A^{nn})^{X_0,r}(s))^{-1}\bar g_n^{X_0,r}(s)ds+\int_0^{x_n}s^{-\al}((\bar A^{nn})^{X_0,r}(s))^{-1}\ell \,ds\right\},
$$
where $\ell$ is a constant in $\R^m$. On the other hand, when $r\le d/4$, we let
\begin{align*}
    \mathbb{A}_2^{X_0,r}:=\Bigg\{q(x)&=d^{-\al}\ell_0+d^{-\al}\sum_{\de=1}^{n-1}\ell_\de(x_\de-(x_0)_\de)\\
    &\qquad+\int_{d}^{x_n}\left((\bar A^{nn})^{X_0,r}(s)\right)^{-1}\left(s^{-\al} \ell_n-d^{-\al}\sum_{\de=1}^{n-1}(\bar A^{n\de})^{X_0,r}(s)\ell_\de\right)ds \Bigg\},
\end{align*}
where $\ell_0,\ell_1,\ldots,\ell_n$ are constants in $\R^m$. Now, we fix $0<p<1$ and set
\begin{align}\label{eq:phi-def}
\vp(X_0,r):=\begin{cases}\inf_{q\in \bA_1^{X_0,r}}\left(\dashint_{Q_r^+(X_0)}|x_n^\al(u-u_0^{X_0,r}-q)|^pd\mu_1\right)^{1/p},&r>d/4,\\
\inf_{q\in \bA_2^{X_0,r}}\left(\dashint_{Q_r(X_0)}|x_n^\al(u-u_0^{X_0,r}-q)|^pd\mu_1\right)^{1/p},&r\le d/4.
\end{cases}
\end{align}
Then we consider a function
\begin{align*}
    q^{X_0,r}(x)&=d^{-\al}\ell_0^{X_0,r}+d^{-\al}\sum_{\de=1}^{n-1}\ell_\de^{X_0,r}(x_\de-(x_0)_\de)\\
    &\qquad+\int_d^{x_n}\left((\bar A^{nn})^{X_0,r}(s)\right)^{-1}\left(s^{-\al}\ell_n^{X_0,r}-d^{-\al}\sum_{\de=1}^{n-1}(\bar A^{n\de})^{X_0,r}(s)\ell_\de^{X_0,r}\right)ds,    
\end{align*}
which belongs to $\bA_1^{X_0,r}$ when $r>d/4$ while $\bA_2^{X_0,r}$ when $r\le d/4$ and satisfies
$$
\left(\dashint_{Q_r^+(X_0)}|x_n^\al(u-u_0^{X_0,r}-q^{X_0,r})|^pd\mu_1\right)^{1/p}=\vp(X_0,r).
$$
Here, when $r>d/4$, we take $\ell_\de^{X_0,r}=0$ for $1\le \de\le n-1$ and $\ell_0^{X_0,r}=\int_0^d(d/s)^\al((\bar A^{nn})^{X_0,r}(s))^{-1}(\ell_n^{X_0,r}+\bar g_n^{X_0,r}(s))ds$. When $d=0$, we understand $d^{-\al}\ell_\de^{X_0,r}=0$ for $0\le \de\le n-1$.

As in Section~\ref{subsec:Schauder-space}, we first obtain the estimate of $\vp$ (Lemma~\ref{lem:vp-est-Diri}) by considering the boundary and interior cases. We start with a technical lemma.

\begin{lemma}
    \label{lem:g-A-diff-est}
    Suppose $X_0\in\overline{Q_{1/2}^+}$, $r\in(0,1/2)$ and $\ka\in(0,1)$. Then 
    \begin{align*}
        &\dashint_{Q_{2\ka r}^+(X_0)}x_n^\al\int_d^{x_n}|\bar g_n^{X_0,\ka r}(s)-\bar g_n^{X_0,r}(s)|s^{-\al}dsd\mu_1\lesssim \frac{r\eta_\bg(r)}{\ka^{n+1-\al^-}},\\
        &\dashint_{Q_{2\ka r}^+(X_0)}x_n^\al\int_d^{x_n}|(\bar A^{\g\delta})^{X_0,\ka r}(s)-(\bar A^{\g\delta})^{X_0,r}(s)|s^{-\al}dsd\mu_1\lesssim \frac{r\eta_A(r)}{\ka^{n+1-\al^-}}.
    \end{align*}
\end{lemma}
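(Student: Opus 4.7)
The plan is to prove both estimates in parallel, since the argument for $A^{\gamma\delta}$ is formally identical to that for $g_n$. I focus on the $\bg$-estimate. The starting point is the identity
\[
\bar g_n^{X_0,\kappa r}(s)-\bar g_n^{X_0,r}(s)=\dashint_{Q'_{2\kappa r}(X'_0)}\bigl[g_n(Y',s)-\bar g_n^{X_0,r}(s)\bigr]\,dY',
\]
which, after taking absolute values and enlarging the averaging domain from $Q'_{2\kappa r}$ to $Q'_{2r}$ at the cost of a factor $\kappa^{-(n+1)}$, yields
\[
|\bar g_n^{X_0,\kappa r}(s)-\bar g_n^{X_0,r}(s)|\le \kappa^{-(n+1)}H(s),\quad H(s):=\dashint_{Q'_{2r}(X'_0)}\bigl|g_n(Y',s)-\bar g_n^{X_0,r}(s)\bigr|\,dY'.
\]

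Substituting into the left-hand side, using that $H$ depends only on $s$ and that $x_n^\alpha\cdot x_n^{-\alpha^+}=x_n^{\alpha^-}$, and integrating out $(t,x')$ reduces the estimate to controlling
\[
\frac{\kappa^{-(n+1)}}{w_\kappa}\int_{(d-2\kappa r)^+}^{d+2\kappa r}x_n^{\alpha^-}\int_d^{x_n}H(s)\,s^{-\alpha}\,ds\,dx_n,\qquad w_\rho:=\int_{(d-2\rho)^+}^{d+2\rho}x_n^{-\alpha^+}\,dx_n.
\]
I then apply Fubini in the $(x_n,s)$-plane, split according to $x_n>d$ and $x_n<d$, and carry out the $x_n$-integration first. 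A short case analysis on the sign of $\alpha$ and on whether $d\ge 2\kappa r$ gives the pointwise bound $s^{-\alpha}\int x_n^{\alpha^-}\,dx_n\lesssim \kappa r\,s^{-\alpha^+}$, so that the double integral is controlled by $\kappa r\int_{(d-2r)^+}^{d+2r}H(s)\,s^{-\alpha^+}\,ds$. Writing the defining average of $H$ explicitly and using Fubini once more, this last integral is recognized as $|Q'_{2r}|^{-1}\int_{Q_{2r}^+(X_0)}|g_n-\bar g_n^{X_0,r}|\,d\mu_1$, which by the $L^1(d\mu^{-\alpha^+})$-DMO definition is bounded by $w_1\,\eta_\bg(2r)$.

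Collecting constants, invoking the almost-monotonicity $\eta_\bg(2r)\lesssim\eta_\bg(r)$ from \eqref{eq:alm-mon}, and combining with the elementary ratio bound $w_1/w_\kappa\lesssim\kappa^{\alpha^--1}$ (checked case by case according to the relative sizes of $d$, $\kappa r$, and $r$) gives
\[
\mathrm{LHS}\lesssim \kappa^{-(n+1)}\cdot\kappa r\cdot\frac{w_1}{w_\kappa}\cdot\eta_\bg(r)\lesssim \frac{r\,\eta_\bg(r)}{\kappa^{n+1-\alpha^-}}.
\]
The estimate for $A^{\gamma\delta}$ follows by the identical argument with $(\bar A^{\gamma\delta})^{X_0,\bullet}$ in place of $\bar g_n^{X_0,\bullet}$ and $\eta_A$ in place of $\eta_\bg$. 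The main technical obstacle is matching the powers of $\kappa$: the pointwise bound $s^{-\alpha}\int x_n^{\alpha^-}\,dx_n\lesssim\kappa r\,s^{-\alpha^+}$ bifurcates according to the sign of $\alpha$ (so that $\alpha^-$ is either $0$ or $\alpha$) and whether $d$ is small or large relative to $\kappa r$, and combining it with $w_1/w_\kappa\lesssim\kappa^{\alpha^--1}$ must produce the sharp final exponent $\kappa^{-(n+1-\alpha^-)}$ rather than a cruder one.
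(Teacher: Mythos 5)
Your overall strategy is the same as the paper's: bound $|\bar g_n^{X_0,\ka r}-\bar g_n^{X_0,r}|$ by an $x'$-average of $|g_n-\bar g_n^{X_0,r}|$, apply Fubini in the $(x_n,s)$-plane to extract a factor $\ka r$, recognize the result as the $\DMO$ quantity over $Q^+_{2r}(X_0)$, and finish with a ratio bound on the one-dimensional weights; the only structural difference (you enlarge the $x'$-average from $Q'_{2\ka r}$ to $Q'_{2r}$ at the start, paying $\ka^{-(n+1)}$ pointwise in $s$, whereas the paper keeps the $Q'_{2\ka r}$-average and enlarges $Q^+_{2\ka r}\subset Q^+_{2r}$ only at the end) is cosmetic, and your $\ka$-bookkeeping and the bound $w_1/w_\ka\lesssim\ka^{\al^--1}$ are correct.

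There is, however, a genuine gap in exactly the step you flag as the main obstacle: the pointwise bound $s^{-\al}\int x_n^{\al^-}\,dx_n\lesssim\ka r\,s^{-\al^+}$ fails when $\al\le-1$. On the branch $s<d$, the $x_n$-integration after Fubini runs over $((d-2\ka r)^+,s)$; when $0<d<2\ka r$ the lower endpoint is $0$ and, for $\al<0$, one faces $\int_0^s x_n^{\al}\,dx_n$, which is $+\infty$ for $\al\le-1$ (and for $d$ slightly above $2\ka r$ the integral is finite but of order $(d-2\ka r)^{1+\al}$, which is not controlled by $\ka r\,s^{\al}$). The trouble originates in the splitting $s^{-\al}=s^{-\al^+}s^{-\al^-}$ forced by taking $d\mu_1=x_n^{-\al^+}dX$ literally: the leftover factor $s^{-\al^-}$ must be absorbed by $\int x_n^{\al^-}\,dx_n$, which is only possible for $\al^->-1$. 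The paper's computation avoids this entirely by working with the weight $x_n^{-\al}$ as in \eqref{eq:not}, so that $x_n^{\al}\,d\mu_1=dX$, the double integral carries no $x_n$-weight, and the factor $s^{-\al}$ is absorbed verbatim into the $L^1(d\mu^{-\al})$-$\DMO$ seminorm, passing to $\eta^{-\al^+}$ at the very end via Lemma~\ref{lem:partial-DMO-weight-rela}. As written, your argument covers $-1<\al<1$; to reach the full range $\al\in(-\infty,1)$ you should redo the computation with the $x_n^{-\al}$ weight and never separate $s^{-\al^-}$ from $s^{-\al^+}$.
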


\begin{proof}
We only prove the first inequality since the second one follows in a similar way. By using the Fubini theorem, we get
\begin{align*}
    &\dashint_{Q_{2\ka r}^+(X_0)}x_n^\al\int_d^{x_n}|\bar g_n^{X_0,\ka r}(s)-\bar g_n^{X_0,r}(s)|s^{-\al}dsd\mu_1\\
    &\le\frac1{\int_{(d-2\ka r)^+}^{d+2\ka r}x_n^{-\al} dx_n}\int_{(d-2\ka r)^+}^{d+2\ka r}\int_{(d-2\ka r)^+}^{x_n}|\bar g_n^{X_0,\ka r}(s)-\bar g_n^{X_0,r}(s)|s^{-\al} dsdx_n\\
    &= \frac1{\int_{(d-2\ka r)^+}^{d+2\ka r}x_n^{-\al} dx_n}\int_{(d-2\ka r)^+}^{d+2\ka r}\int_s^{d+2\ka r}|\bar g_n^{X_0,\ka r}(s)-\bar g_n^{X_0,r}(s)|s^{-\al} dx_nds\\
    &\lesssim \frac{\ka r}{\int_{(d-2\ka r)^+}^{d+2\ka r}x_n^{-\al}dx_n}\int_{(d-2\ka r)^+}^{d+2\ka r}|\bar g_n^{X_0,\ka r}(s)-\bar g_n^{X_0,r}(s)|s^{-\al}ds\\
    &\lesssim \frac1{(\ka r)^n\int_{(d-2\ka r)^+}^{d+2\ka r}x_n^{-\al}dx_n}\int_{Q_{2\ka r}^+(X_0)}|g_n-\bar g_n^{X_0,r}|d\mu_1\\
    &\lesssim \frac{r^{n+1}\int_{(d-2r)^+}^{d+2r}x_n^{-\al}dx_n}{(\ka r)^n\int_{(d-2\ka r)^+}^{d+2\ka r}x_n^{-\al}dx_n}\dashint_{Q_{2r}^+(X_0)}|g_n-\bar g_n^{X_0,r}|d\mu_1.
\end{align*}
Recall $\al=\al^++\al^-$. We claim that $\mathcal{A}:=\frac{\int_{(d-2r)^+}^{d+2r}x_n^{-\al}dx_n}{\int_{(d-2\ka r)^+}^{d+2\ka r}x_n^{-\al}dx_n}\lesssim (1/\ka)^{1-\al^-}$, which combined with the preceding estimate concludes Lemma~\ref{lem:g-A-diff-est}. Indeed, if $d\le\ka r$, then $\mathcal{A}\lesssim\frac{r^{1-\al}}{(\ka r)^{1-\al}}=(1/\ka)^{1-\al}$. On the other hand, if $d\ge r$, then $\mathcal{A}\lesssim\frac{rd^{-\al}}{\ka rd^{-\al}}=1/\ka$. Finally, if $\ka r<d<r$, then $\mathcal{A}\lesssim \frac{r^{1-\al}}{\ka rd^{-\al}}=(r/d)^{-\al}1/\ka\lesssim\begin{cases}
    1/\ka&\text{if }\al\ge0,\\
    (1/\ka)^{1-\al}&\text{if }\al<0.
\end{cases}$
\end{proof}

\begin{lemma}
    \label{lem:vp-bdry-est-Diri}
    Let $0<p<1$, $0<\be<1$, and $\bar X_0\in Q'_{1/2}$. Then there is a constant $\ka\in(0,1/4)$, depending only on $n,m,\al,p,\la$ and $\be$, such that
    \begin{align}
        \label{eq:vp-bdry-est-Diri}
         \vp(\bar X_0,\ka r)\le \ka^{1+\be'}\vp(\bar X_0,r)+Cr\eta(r)
    \end{align}
    for any $0<r<1/4$, where $C=C(n,m,\al,p,\la)>0$.
\end{lemma}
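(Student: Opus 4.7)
The plan is the standard Campanato-type freezing argument driving the iteration for $\vp$ on dyadic scales. Without loss of generality $\bar X_0 = 0$, so $d = 0$ and the first branch of $\vp$ is in force, and both $u_0 := u_0^{0,r}$ and $q^* := q^{0,r}$ vanish at $x_n = 0$. Setting $\bar A = \bar A^{0,r}$, $\bar \bg = \bar \bg^{0,r}$, and $\tilde u := u - u_0 - q^*$, one has $\tilde u = 0$ on $Q_r'$. Since $u_0 + q^* \in \bA_1^{0,r}$ depends only on $x_n$ and $\bar A^{\g\de}(x_n)$ is independent of $X'$, a direct computation shows $D_\g(x_n^\al \bar A^{\g\de} D_\de(u_0+q^*)) = \bar g_n'(x_n) = \div \bar \bg$, hence
\begin{equation*}
D_\g(x_n^\al \bar A^{\g\de} D_\de \tilde u) - x_n^\al \pa_t \tilde u = \div \bff, \qquad \bff := (\bg - \bar \bg) + x_n^\al(\bar A - A)Du, \text{ in } Q_r^+.
\end{equation*}

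I would then pick cylinders $\cQ_r \subset \tilde \cQ_r$ as in Lemma~\ref{lem:weak-type-(1,1)-Diri} and decompose $\tilde u = v + w$, where $w$ solves the same equation with source $\div(\bff \chi_{\cQ_r})$ and zero parabolic data on $\tilde \cQ_r$, while $v$ solves the homogeneous frozen equation on $Q_r^+$ with $v = 0$ on $Q_r'$. The weak-type bound \eqref{eq:weak-type-2}, combined with Jensen's inequality $(\dashint |\cdot|^p)^{1/p} \le \dashint |\cdot|$ for $p < 1$, gives $(\dashint_{Q_{\ka r}^+}|x_n^\al w|^p d\mu_1)^{1/p} \le C_\ka r \dashint_{Q_r^+}|\bff|\, d\mu_1 \lesssim C_\ka\, r\, \eta(r)$ by the definitions of $\eta_A, \eta_\bg, \eta$. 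For $v$, the coefficients depend only on $x_n$, so by \cite{DonPha21} the function $\tilde v_\al := x_n^\al v$ is $C^{1,2}$ up to the lateral boundary with the Campanato-type bound $\|\tilde v_\al\|_{C^{1,2}(Q^+_{r/2})} \lesssim r^{-2} (\dashint_{Q_r^+}|\tilde v_\al|^p d\mu_1)^{1/p}$. Since $\tilde v_\al$ vanishes on $\{x_n=0\}$, all its tangential and temporal derivatives at $0$ vanish, so the first-order parabolic Taylor polynomial at $0$ reduces to $\ell_0 x_n$ with $|\ell_0| \lesssim r^{-1}(\dashint_{Q_r^+}|\tilde v_\al|^p d\mu_1)^{1/p}$.

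The competitor is $q' \in \bA_1^{0,\ka r}$ with parameter $\ell = \ell^* + \tilde\ell$, where $\tilde\ell$ is chosen so that the corresponding element $q'' \in \bA_1^{0,\ka r}$ (with parameter $\tilde\ell$) approximates $\ell_0 x_n$ in the $L^1(d\mu_1)$ sense; then
\begin{equation*}
u - u_0^{0,\ka r} - q' = (v - q'') + w + R,
\end{equation*}
where $R$ depends only on $x_n$ and collects $(u_0^{0,r} - u_0^{0,\ka r})$ together with $\int_0^{x_n} s^{-\al}[((\bar A^{nn})^{0,r}(s))^{-1} - ((\bar A^{nn})^{0,\ka r}(s))^{-1}]\ell^*\, ds$. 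By Lemma~\ref{lem:g-A-diff-est} and its $\bar\bg$-analogue, $\dashint_{Q_{\ka r}^+} x_n^\al |R|\, d\mu_1 \le C_\ka r(\eta_\bg(r) + |\ell^*|\eta_A(r))$; the bound $|\ell^*| \lesssim \|x_n^\al \D u\|_\infty + \|\bg\|_\infty$, deduced from the minimality defining $q^*$ together with Lemma~\ref{lem:gradient-unif-est-Diri}, makes this $\le C_\ka r \eta(r)$. The Taylor remainder yields $\sup_{Q_{\ka r}^+}|\tilde v_\al - \ell_0 x_n| \lesssim \ka^2(\dashint_{Q_r^+}|\tilde v_\al|^p d\mu_1)^{1/p} \lesssim \ka^2(\vp(0,r) + r\eta(r))$ (using $\tilde v_\al = x_n^\al \tilde u - x_n^\al w$, the definition of $\vp$, and the weak-type estimate on $Q_r^+$), while the mismatch $\ell_0 x_n - x_n^\al q''$ is again absorbed via Lemma~\ref{lem:g-A-diff-est}. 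Combining yields $\vp(0,\ka r) \le C_0 \ka^2 \vp(0,r) + C_\ka r \eta(r)$, and choosing $\ka \in (0, 1/4)$ so small that $C_0 \ka^{1-\be'} \le 1$ produces \eqref{eq:vp-bdry-est-Diri}. The main technical obstacle is the compatibility of the Taylor approximation with the $\bA_1$ class: elements of $\bA_1^{0,\ka r}$ are not affine in $x_n$ — their profile is dictated by $(\bar A^{nn})^{0,\ka r}(s)$ — so matching $\ell_0 x_n$ to $x_n^\al q''$ introduces an error intrinsically tied to the DMO behavior of $\bar A^{nn}$, which is precisely the type of error quantified by Lemma~\ref{lem:g-A-diff-est}.
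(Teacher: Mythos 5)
Your overall architecture --- freeze the coefficients in $X'$, split off the explicit $x_n$-profile $u_0+q$, decompose the remainder as $v+w$, control $w$ by the weak-type $(1,1)$ estimate, extract a $\ka^{2}$-decay from the homogeneous part $v$, and pay for the change of scale in the profiles via Lemma~\ref{lem:g-A-diff-est} --- is the same as the paper's, and your treatment of $w$, of $u_0^{0,r}-u_0^{0,\ka r}$, and of the bound on the parameter $\ell^*$ is correct.

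The gap is in the treatment of $v$. You assert that, because $\bar A$ depends only on $x_n$, the function $x_n^\al v$ is $C^{1,2}$ up to $\{x_n=0\}$ and admits a first-order Taylor expansion $\ell_0x_n$ with remainder $O(\ka^2)$. This is false in the present setting: $A$ is only of partial DMO in $X'$, with no regularity assumed in $x_n$, so $\bar A^{nn}(x_n)$ is merely bounded measurable, and $x_n^\al D_nv=(\bar A^{nn})^{-1}\bigl(V-\sum_{\de<n}\bar A^{n\de}\,x_n^\al D_\de v\bigr)$ is not even continuous in $x_n$. What \cite{DonPha21} actually controls is $x_n^\al\D_{x'}v$ (vanishing linearly at the boundary) and $V=x_n^\al\bar A^{n\de}D_\de v$ (which is $C^{1/2,1}$); consequently the correct first-order approximation of $v$ at the boundary point is not affine but the profile $\hat q_r(x_n)=\int_0^{x_n}s^{-\al}((\bar A^{nn})^{0,r}(s))^{-1}V(0)\,ds\in\bA_1^{0,r}$. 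Your attempt to absorb the mismatch between $\ell_0x_n$ and the $\bA_1$-profile ``via Lemma~\ref{lem:g-A-diff-est}'' does not work: that lemma compares the $X'$-averages of the coefficient at two scales $\ka r$ and $r$, exploiting the DMO$_{X'}$ hypothesis; it says nothing about the deviation of $(\bar A^{nn})^{0,\ka r}(s)$ from a constant in $s$, which is of size $O(1)$ and would contribute an error of order $\ka r$ rather than $\ka^{1+\be'}\vp(0,r)+C_\ka r\eta(r)$, destroying the iteration. The repair is to never introduce the affine function: compare $v$ with $\hat q_r$ directly, so that $x_n^\al\bar A^{n\de}D_\de(v-\hat q_r)=V-V(0)$ and hence $\|x_n^\al\D(v-\hat q_r)\|_{L^\infty(Q^+_{\ka r})}\le C\ka r^{-1}\bigl(\dashint_{Q_r^+}|x_n^\al v|^p\,d\mu_1\bigr)^{1/p}$; integrate this from the boundary (using $v-\hat q_r=0$ on $Q'_r$) to gain the second factor of $\ka$; and only afterwards switch from $\hat q_r$ to $\hat q_{\ka r}$, with the same parameter $V(0)$ but the profile at scale $\ka r$ --- this last step is where Lemma~\ref{lem:g-A-diff-est} legitimately enters.
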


\begin{proof}
Without loss of generality, we may assume $\bar X_0=0$. For simplicity, we write $\bar g_n^{0,r}=\bar g_n^{r}$, $u_0^{0,r}=u_0^{r}$ and $\bA_1^{0,r}=\bA_1^r$, etc., where $u_0^{0.r}$ is as in \eqref{eq:u_0}. We decompose $u=u_0^{r}+v+w$, where $v$ and $w$ are as in the proof of Lemma~\ref{lem:bdry-est-Diri}. We recall that $v$ is a solution of \eqref{eq:v-Diri} and let $V=x_n^\al\bar A^{n\de}D_\de v$ as before. We then define the operator $\hat T^{\rho}$, $0<\rho<1/4$, as follows: given $f\in W^{1,p}(Q_{2\rho}^+;\R^m, x_n^{\al p}d\mu_1)$,
$$
\hat T^{\rho}f(y_n):=\int_0^{y_n}s^{-\al}((\bar A^{nn})^{r}(s))^{-1}(x_n^\al\bar A^{n\de}D_\de f)(0)\,ds.
$$
We write $\hat q_{\rho}:=\hat T^{\rho}v\in \bA_1^r$. We use \cite{DonPha21}*{(4.9) and (5.15)} to deduce
\begin{align*}
    &\|x_n^\al\D_{x'}(v-\hat q_{r})\|_{L^\infty(Q^+_{\ka r})}=\|x_n^\al\D_{x'}v\|_{L^\infty(Q^+_{\ka r})}\le C\ka\left(\dashint_{Q^+_{r/2}}|x_n^\al\D_{x'}v|^pd\mu_1\right)^{1/p},\\
    &\|x_n^\al\sum_{\de=1}^n(\bar A^{n\de})^{r}D_\de(v-\hat q_{r})\|_{L^\infty(Q^+_{\ka r})}=\|V-V(0)\|_{L^\infty(Q^+_{\ka r})}\le \ka r[V]_{C^{1/2,1}(Q^+_{\ka r})}\\
    &\,\,\,\,\qquad\qquad\qquad\qquad\qquad\qquad\qquad\le C\ka\left(\dashint_{Q^+_{r/2}}|x_n^\al\D v|^pd\mu_1\right)^{1/p}.
\end{align*}
Then, it follows by \cite{DonPha21}*{(4.11)},
\begin{align*}
    \|x_n^\al\D(v-\hat q_{r})\|_{L^\infty(Q^+_{\ka r})}\le C\ka\left(\dashint_{Q_{r/2}^+}|x_n^\al \D v|^pd\mu_1\right)^{1/p}\le \frac{C\ka}r\left(\dashint_{Q_r^+}|x_n^\al v|^pd\mu_1\right)^{1/p}.
\end{align*}
Since $v-\hat q_r=0$ on $Q'_r$, by arguing as in \eqref{eq:u-L^infty-bound}, we get
$$
|(v-\hat q_r)(X)|\le C\|x_n^\al\D(v-\hat q_r)\|_{L^\infty(Q^+_{\ka r})}x_n^{1-\al},\quad X\in Q^+_{\ka r}.
$$
Combining the previous two estimates yields that for $X\in Q_{\ka r}^+$,
$$
|x_n^\al(v-\hat q_r)(X)|\le C\|x_n^\al \D(v-\hat q_r)\|_{L^\infty(Q^+_{\ka r})}x_n\le C\ka^2\left(\dashint_{Q_r^+}|x_n^\al v|^pd\mu_1\right)^{1/p}.
$$
For any $q\in\bA_1^r$, as $v-q$ satisfies \eqref{eq:v-Diri} and $\hat T^{r}q=q$, we can replace $v$ with $v-q$ in the above estimate to obtain
\begin{align}
    \label{eq:hom-poly-diff-est-Diri}
    \|x_n^\al(v-\hat q_r)\|_{L^\infty(Q^+_{\ka r})} \le C\ka^2\left(\dashint_{Q_r^+}|x_n^\al(v-q)|^pd\mu_1\right)^{1/p}.
\end{align}
Moreover, it is easily seen that 
\begin{align*}
    \left|((\bar A^{nn})^{\ka r})^{-1}\bar g_n^{\ka r}-((\bar A^{nn})^{r})^{-1}\bar g_n^{r}\right|\lesssim |\bar g_n^{\ka r}-\bar g_n^{r}|+\|g_n\|_\infty|(\bar A^{nn})^{\ka r}-(\bar A^{nn})^{r}|.
\end{align*}
This, along with $\eta=(\|x_n^\al\D u\|_{L^\infty(Q_{1/2}^+)}+\|\bg\|_{L^\infty(Q_1^+)})\eta_A+\eta_\bg$ and Lemma~\ref{lem:g-A-diff-est}, gives
\begin{align}
    \label{eq:u_0-diff-est}
    \left(\dashint_{Q_{\ka r}^+}|x_n^\al(u_0^{\ka r}-u_0^r)|^pd\mu_1\right)^{1/p}\lesssim\frac{r\eta(r)}{\ka^{n+1-\al^-}}.
\end{align}
Next, we recall $\hat q_{r}(x_n)=\int_0^{x_n}s^{-\al}((\bar A^{nn})^{r}(s))^{-1}V(0)\,ds\in \bA_1^r$ and 

\noindent$\hat q_{\ka r}(x_n)=\int_0^{x_n}s^{-\al}((\bar A^{nn})^{\ka r}(s))^{-1}V(0)\,ds\in \bA_1^{\ka r}$.
Then we have by applying Lemma~\ref{lem:g-A-diff-est},
\begin{align*}
    \left(\dashint_{Q_{\ka r}^+}|x_n^\al(\hat q_{\ka r}-\hat q_{r})|^pd\mu_1\right)^{1/p}\le \frac{Cr\eta_A(r)}{\ka^{n+1-\al^-}}|V(0)|.
\end{align*}
To estimate $|V(0)|$, we use the equalities $v=u-u_0^{r}-w$ and $x_n^\al\D u_0^{r}=((\bar A^{nn})^{r})^{-1}\bar g_n^{r}\vec{e}_n$ and apply \cite{DonPha21}*{Proposition~4.2} and \eqref{eq:w-est} to have
\begin{align*}
    |V(0)|&\le \|V\|_{L^\infty(Q_{r/2}^+)}\lesssim \|x_n^\al\D v\|_{L^\infty(Q^+_{r/2})}\lesssim \left(\dashint_{Q_r^+}|x_n^\al\D v|^pd\mu_1\right)^{1/p}\\
    &\lesssim \|x_n^\al\D u\|_\infty+\|x_n^\al\D u_0^{r}\|_\infty+\left(\dashint_{Q_r^+}|x_n^\al\D w|^pd\mu_1\right)^{1/p}\lesssim \|x_n^\al\D u\|_\infty+\|\bg\|_\infty.
\end{align*}
Thus,
\begin{align}
    \label{eq:q-diff-est}
    \left(\dashint_{Q_{\ka r}^+}|x_n^\al(\hat q_{\ka r}-\hat q_r)|^pd\mu_1\right)^{1/p}\lesssim \frac{r\eta(r)}{\ka^{n+1-\al^-}}.
\end{align}
On the other hand, by applying the weak type-(1,1) estimate \eqref{eq:weak-type-2} and using the argument that led to \eqref{eq:w-est}, we get
\begin{align}\label{eq:w-est-2}
\left(\dashint_{Q_r^+}|x_n^\al w|^pd\mu_1\right)^{1/p}\le Cr\left(\|x_n^\al\D u\|_{L^\infty(Q_1^+)}\eta_A(2r)+\eta_\bg(2r)\right).
\end{align}
Now, we denote by $C_\ka$ a constant which may vary from line to line but depends only on $\ka,n,m,\al,p$, and $\la$. From \eqref{eq:hom-poly-diff-est-Diri} - \eqref{eq:w-est-2}, we infer that for any $q\in \bA_1^r$,
\begin{align*}
    &\dashint_{Q_{\ka r}^+}|x_n^\al(u-u_0^{\ka r}-\hat q_{\ka r})|^pd\mu_1\le \dashint_{Q^+_{\ka r}}|x_n^\al(u-u_0^{r}-\hat q_{r})|^pd\mu_1+C_\ka(r\eta(r))^p\\
    &\le \dashint_{Q_{\ka r}^+}|x_n^\al(v-\hat q_{r})|^pd\mu_1+\dashint_{Q_{\ka r}^+}|x_n^\al w|^pd\mu_1+C_\ka(r\eta(r))^p\\
    &\le C\ka^{2p}\dashint_{Q_{r}^+}|x_n^\al(v-q)|^pd\mu_1+C_\ka\dashint_{Q_r^+}|x_n^\al w|^pd\mu_1+C_\ka(r\eta(r))^p\\
    &\le C\ka^{2p}\dashint_{Q_r^+}|x_n^\al(u-u_0^{r}-q)|^pd\mu_1+C_\ka(r\eta(r))^p.
\end{align*}
As before, this implies \eqref{eq:vp-bdry-est-Diri}.
\end{proof}

\begin{lemma}
    \label{lem:vp-int-est-Diri}
    Let $p,\be,\be'$, and $\ka$ be as in Lemma~\ref{lem:vp-bdry-est-Diri}. Then, for any $X_0\in Q^+_{1/2}$ and $0<r<d/4$,
    \begin{align}\label{eq:vp-int-est-Diri}
    \vp(X_0,\ka r)\le \ka^{1+\be'}\vp(X_0,r)+Cr\eta(r).
    \end{align}
\end{lemma}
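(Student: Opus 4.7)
The idea is to parallel the boundary argument of Lemma~\ref{lem:vp-bdry-est-Diri}, exploiting the fact that in the interior regime $r < d/4$ the weight $x_n^\al$ is comparable to $d^\al$ throughout $Q_r(X_0)$, so that the equation, after multiplying by $d^{-\al}$, behaves as a uniformly parabolic system with coefficients depending only on $x_n$. Accordingly, I would decompose $u=u_0^{X_0,r}+v+w$ in $Q_r(X_0)$, where $u_0^{X_0,r}$ is as in \eqref{eq:u_0}, $w$ solves a Dirichlet problem on a smooth subdomain of $Q_r(X_0)$ with source $\div\big((\bg-\bar\bg^{X_0,r}+x_n^\al(\bar A^{X_0,r}-A)\D u)\chi_{Q_r(X_0)}\big)$, and $v=u-u_0^{X_0,r}-w$ solves the homogeneous equation $D_\g(x_n^\al(\bar A^{\g\de})^{X_0,r}D_\de v)-x_n^\al\partial_tv=0$ in $Q_r(X_0)$ with $x_n$-dependent coefficients only.

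For $w$, applying the interior analogue of the weak type-(1,1) estimate from Lemma~\ref{lem:weak-type-(1,1)-Diri} to the uniformly parabolic equation obtained after dividing by $d^\al$, together with the argument leading to \eqref{eq:w-est-2}, gives
\[
\Big(\dashint_{Q_{\ka r}(X_0)}|x_n^\al w|^pd\mu_1\Big)^{1/p}\le C_\ka\, r\,\eta(r).
\]
For $v$, since the coefficients depend only on $x_n$, the quantities $D_{x'}v$ and $V:=x_n^\al(\bar A^{n\de})^{X_0,r}D_\de v$ enjoy interior $C^{1/2,1}$ bounds (cf.\ \cite{DonPha21}*{(4.9) and (5.15)} applied after rescaling). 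I would define $\hat q_r\in\bA_2^{X_0,r}$ by choosing $\ell_0^r=d^\al v(X_0)$, $\ell_\de^r=d^\al D_\de v(X_0)$ for $1\le\de\le n-1$, and $\ell_n^r=V(X_0)$, so that $\hat q_r(X_0)=v(X_0)$, $D_\de \hat q_r(X_0)=D_\de v(X_0)$, and $V(X_0)=x_n^\al\sum_\de (\bar A^{n\de})^{X_0,r}(d)D_\de\hat q_r(X_0)$. Then $v-\hat q_r$ vanishes to first order in space at $X_0$ and still solves the homogeneous equation, so a standard first-order Taylor expansion together with the $C^{1/2,1}$ bound on $(D_{x'}v,V)$ yields
\[
\|x_n^\al(v-\hat q_r)\|_{L^\infty(Q_{\ka r}(X_0))}\le C\ka^2\Big(\dashint_{Q_{r}(X_0)}|x_n^\al(v-q)|^pd\mu_1\Big)^{1/p}
\]
for every $q\in\bA_2^{X_0,r}$, using that the class $\bA_2^{X_0,r}$ is closed under subtraction and that any member satisfies the homogeneous equation up to a lower-order error that rescales correctly.

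To pass from $\bA_2^{X_0,r}$ to $\bA_2^{X_0,\ka r}$ and from $u_0^{X_0,r}$ to $u_0^{X_0,\ka r}$, I would apply Lemma~\ref{lem:g-A-diff-est} exactly as in \eqref{eq:u_0-diff-est}--\eqref{eq:q-diff-est}, picking up an error of order $C_\ka r\eta(r)$ (where the $\ka$-dependence is absorbed into the additive term). Assembling the three estimates,
\begin{align*}
\dashint_{Q_{\ka r}(X_0)}|x_n^\al(u-u_0^{X_0,\ka r}-\hat q_{\ka r})|^pd\mu_1
&\le C\ka^{2p}\dashint_{Q_r(X_0)}|x_n^\al(u-u_0^{X_0,r}-q)|^pd\mu_1\\
&\quad+C_\ka(r\eta(r))^p
\end{align*}
for arbitrary $q\in\bA_2^{X_0,r}$, and choosing $\ka$ small enough so that $C\ka^2\le\ka^{1+\be'}$ yields \eqref{eq:vp-int-est-Diri}.

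\textbf{Main obstacle.} The most delicate point is the approximation step: one must verify that the \emph{specific} parametrization defining $\bA_2^{X_0,r}$, which mixes pointwise values of $D_\de v(X_0)$ with the integral against $(\bar A^{nn})^{X_0,r}(s)^{-1}$, really produces an element whose gap to $v$ in $Q_{\ka r}(X_0)$ is controlled by $\ka^2$ times $\vp(X_0,r)$ rather than a larger power of $\ka$. This requires carefully matching the first-order Taylor expansion of $v$ in the conormal-adapted coordinates against the definition of $\bA_2^{X_0,r}$, and checking that the extra terms in the integrand that are not matched exactly at $X_0$ (namely the variation of $(\bar A^{n\de})^{X_0,r}(s)$ away from $s=d$) only contribute $O(r\eta_A(r))$ after integration — which is exactly the structure absorbed by $C_\ka r\eta(r)$.
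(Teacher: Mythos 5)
Your proposal is correct in substance, but it takes a different route from the paper. The paper's proof of Lemma~\ref{lem:vp-int-est-Diri} is essentially a reduction: since $3/4<x_n/d<5/4$ on $Q_r(X_0)$, it rewrites the equation as $D_\g(x_n^\al A^{\g\de}D_\de u)-x_n^\al\partial_tu=\div(x_n^\al\tilde\bg)$ with $\tilde\bg=x_n^{-\al}\bg$, checks that the partial mean oscillation of $\tilde\bg$ is $\lesssim d^{-\al}\eta_\bg(r)$, re-expresses $u_0^{X_0,r}$ and $\bA_2^{X_0,r}$ in terms of $\tilde g_n$, and then invokes the interior lemma of the companion paper (\cite{DonJeo24}*{Lemma~3.10}) for the unweighted quantity $\tilde\vp$, finally multiplying back by $d^\al$. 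You instead carry out the underlying Campanato argument directly in the weighted setting: decomposition $u=u_0^{X_0,r}+v+w$, weak type-$(1,1)$ for $w$, and a first-order approximation of $v$ by the element of $\bA_2^{X_0,r}$ matching $(v,\D_{x'}v,V)$ at $X_0$, which indeed yields the $\ka^2$ gain because $\bA_2^{X_0,r}$ consists of exact solutions of the frozen equation, is reproduced by your construction (so $\hat q_r[v-q]=\hat q_r[v]-q$), and the Taylor remainder is controlled through the Lipschitz quantities $(\D_{x'}v,V)$ together with the bound on $\partial_tv$ coming from the equation. The obstacle you flag — matching the $x_n$-dependence of the parametrization against the conormal-adapted expansion — is precisely the content of the cited lemma, and your resolution is the right one. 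What each approach buys: the paper's reduction is short because the delicate approximation step is outsourced to \cite{DonJeo24}, while yours is self-contained and makes transparent why one obtains decay $\ka^2$, strictly better than the required $\ka^{1+\be'}$. Two cosmetic corrections: the auxiliary domain for $w$ should \emph{contain} $Q_r(X_0)$ and sit inside $Q_{2r}(X_0)$ (not be a subdomain of $Q_r(X_0)$), and the elements of $\bA_2^{X_0,r}$ satisfy the frozen homogeneous equation exactly, not merely up to a rescalable error.
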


\begin{proof}
To obtain Lemma~\ref{lem:vp-int-est-Diri}, we will apply \cite{DonJeo24}*{Lemma~3.10}. To this end, we observe that
\begin{align*}
    D_\g(x_n^\al A^{\g\de}D_\de u)-x_n^\al\partial_tu=\div(x_n^\al\tilde\bg) \quad\text{in }Q_r(X_0),\quad\text{where }\tilde\bg:=x_n^{-\al}\bg.
\end{align*}
Since $3/4<x_n/d<5/4$ in $Q_r(X_0)$, we have
\begin{align}\label{eq:tilde-g-est}
    \dashint_{Q_r(X_0)}\left|\tilde\bg(X)-\dashint_{Q'_r(X'_0)}\tilde\bg(Y',x_n)dY'\right|d\mu_1(X)\lesssim d^{-\al}\eta_\bg(r).
\end{align}
Note that $u_0^{X_0,r}$ and $\mathbb{A}_2^{X_0,r}$ can be rewritten as
\begin{align*}
    &u_0^{X_0,r}(x_n)=\int_d^{x_n}((\bar A^{nn})^{X_0,r}(s))^{-1}\dashint_{Q'_{2r}(X'_0)}\tilde g_n(Y',s)dY'ds,\\
    &\mathbb{A}_2^{X_0,r}=\bigg\{q\,:\,q(x)=\ell_0+\sum_{\de=1}^{n-1}\ell_\de(x_\de-(x_0)_\de)\\
    &\qquad\qquad\qquad\qquad\qquad+\int_{d}^{x_n}\left((\bar A^{nn})^{X_0,r}(s)\right)^{-1}\left((d/s)^\al \ell_n-\sum_{\de=1}^{n-1}(\bar A^{n\de})^{X_0,r}(s)\ell_\de\right)ds \bigg\}.
\end{align*}
We set
$$
\tilde\vp(X_0,r):=\inf_{q\in \mathbb{A}_{2}^{X_0,r}}\left(\dashint_{Q_r(X_0)}|u- u_0^{X_0,r}-q|^pd\mu_1\right)^{1/p}.
$$
By using \eqref{eq:tilde-g-est} and following the argument leading to \cite{DonJeo24}*{Lemma~3.10}, we obtain
\begin{align*}\begin{split}
    \tilde\vp(X_0,\ka r)&\lesssim \ka^{1+\be'}\tilde\vp(X_0,r)+rd^{-\al}\tilde\eta_\bg(r)\\
    &\qquad+\left(\|\D u\|_{L^\infty(Q_{2r}(X_0))}+\|\tilde\bg\|_{L^\infty(Q_{2r}(X_0))}\right)r\tilde\eta_A(r).
\end{split}\end{align*}
By multiplying $d^\al$ in both sides and using $1/2<x_n/d<3/2$ in $Q_{2r}(X_0)$, we obtain \eqref{eq:vp-int-est-Diri}.    
\end{proof}

\begin{lemma}
    \label{lem:l-i-bound}
Let $\sigma:(0,1)\to[0,\infty)$ be a Dini function satisfying \eqref{eq:alm-mon} and $\sigma\ge\eta$, where $\eta$ is as in \eqref{eq:Dini-tilde}. Assume that for some point $X_0\in \overline{Q_{1/2}^+}$,
\begin{align}\label{eq:vp-bound-assump}
\vp(X_0,r)\le r\sigma(r),\quad 0<r<1/8.
\end{align}
Then there is a constant $C>0$, depending only on $n,m,\la,\al,p,$ such that for every $0<r<1/16$,
\begin{align}
    \label{eq:u-poly-grad-est}
    &\left(\dashint_{Q_r^+(X_0)}|x_n^\al\D(u-u_0^{X_0,r}-q^{X_0,r})|^pd\mu_1\right)^{1/p}\le C\sigma(r),\\
    \label{eq:U-l-diff-est}
    & \left(\dashint_{Q_r^+(X_0)}|U-\ell_n^{X_0,r}|^pd\mu_1\right)^{1/p}\le C\sigma(r),
\end{align}
and  for every $1\le\delta\le n$,
\begin{align}\label{eq:l_i-est}
    |\ell_\de^{X_0,r}-\ell_\de^{X_0,2r}|\le C\sigma(r)\quad\text{and}\quad|\ell_\de^{X_0,r}|\le C\left(\|\uu\|_\infty+\|\bg\|_\infty+\int_0^1\frac{\sigma(\rho)}\rho d\rho\right).
\end{align}
\end{lemma}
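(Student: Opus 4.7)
The plan is to establish \eqref{eq:u-poly-grad-est}, \eqref{eq:U-l-diff-est}, and \eqref{eq:l_i-est} in sequence, following the template of the analogous lemma in \cite{DonJeo24} but with attention to the weighted and case-split structure of $u_0^{X_0,r}+q^{X_0,r}$. For \eqref{eq:u-poly-grad-est}, set $w:=u-u_0^{X_0,r}-q^{X_0,r}$. A direct computation from the explicit forms of $u_0^{X_0,r}$ and $q^{X_0,r}$ shows that
$$
x_n^\al(\bar A^{n\de})^{X_0,r}(x_n)\,D_\de(u_0^{X_0,r}+q^{X_0,r})=\bar g_n^{X_0,r}(x_n)+\ell_n^{X_0,r},
$$
and moreover that $w$ vanishes on $\{x_n=0\}$ in the boundary regime $r>d/4$ (this is precisely what forces the specific expression for $\ell_0^{X_0,r}$ recorded just after the definition of $q^{X_0,r}$). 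Consequently $w$ satisfies, on $Q_{2r}^+(X_0)$,
$$
D_\g(x_n^\al(\bar A^{\g\de})^{X_0,r}D_\de w)-x_n^\al\partial_tw=\div\!\bigl[(\bg-\bar\bg^{X_0,r})+x_n^\al((\bar A^{\g\de})^{X_0,r}-A^{\g\de})D_\de u\bigr],
$$
with zero Dirichlet trace when $r>d/4$ and purely interior data when $r\le d/4$. A weighted Caccioppoli-type inequality for $w$ on $Q_r^+\subset Q_{2r}^+$, combined with the weak type-$(1,1)$ estimate of Lemma~\ref{lem:weak-type-(1,1)-Diri} to reduce exponents to $p<1$, yields
$$
\Bigl(\dashint_{Q_r^+(X_0)}|x_n^\al\D w|^p\,d\mu_1\Bigr)^{1/p}\lesssim r^{-1}\vp(X_0,2r)+\eta(2r)\lesssim\sigma(r),
$$
using $\vp(X_0,2r)\le 2r\sigma(2r)$, the almost-monotonicity \eqref{eq:alm-mon} of $\sigma$, and $\sigma\ge\eta$.

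For \eqref{eq:U-l-diff-est}, the same algebraic identity rewrites as
$$
U-\ell_n^{X_0,r}=x_n^\al(\bar A^{n\de})^{X_0,r}D_\de w+x_n^\al(A^{n\de}-(\bar A^{n\de})^{X_0,r})D_\de u+(\bar g_n^{X_0,r}-g_n).
$$
The $L^p(d\mu_1)$-average of the first term is controlled by \eqref{eq:u-poly-grad-est}, and of the other two by $\|x_n^\al\D u\|_\infty\,\eta_A(r)+\eta_\bg(r)\le\eta(r)\le\sigma(r)$, with the $L^\infty$-bound on $x_n^\al\D u$ coming from Lemma~\ref{lem:gradient-unif-est-Diri}.

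For \eqref{eq:l_i-est}, the triangle inequality produces
$$
\Bigl(\dashint_{Q_r^+(X_0)}|x_n^\al[(u_0^{X_0,r}+q^{X_0,r})-(u_0^{X_0,2r}+q^{X_0,2r})]|^p\,d\mu_1\Bigr)^{1/p}\lesssim\vp(X_0,r)+\vp(X_0,2r)\lesssim r\sigma(r).
$$
The integrand is a specific linear combination of the basis functions $1$, $\{x_\de-(x_0)_\de\}_{1\le\de\le n-1}$, and the explicit $x_n$-profile, with coefficients given by $d^{-\al}(\ell_\de^{X_0,r}-\ell_\de^{X_0,2r})$ (and the analogous expression for $\ell_n$), perturbed by $O(r\eta(r))$ errors coming from $(\bar A)^{X_0,r}-(\bar A)^{X_0,2r}$ and $\bar\bg^{X_0,r}-\bar\bg^{X_0,2r}$, controlled via Lemma~\ref{lem:g-A-diff-est}. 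Since these basis functions are almost-orthogonal in the weighted $L^p$-norm on $Q_r^+(X_0)$, coefficient extraction yields $|\ell_\de^{X_0,r}-\ell_\de^{X_0,2r}|\lesssim\sigma(r)$, with normalization factors $rd^{-\al}$ or $r^{1-\al}$ absorbing into the $r$-prefactor on the right. Telescoping this dyadic increment from $r$ up to a fixed base scale $r_0\in(0,1/16)$ and summing produces the bound $\int_0^1\sigma(\rho)/\rho\,d\rho$, while $|\ell_\de^{X_0,r_0}|$ at the base scale follows directly from the $L^p$ estimate for $u-u_0^{X_0,r_0}-q^{X_0,r_0}$ combined with $\|\uu\|_\infty+\|\bg\|_\infty$.

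The main technical obstacle lies in \eqref{eq:l_i-est}: extracting each scalar $\ell_\de^{X_0,r}$ from the weighted $L^p$-norm of a function in a subspace with weight-dependent and $r$-dependent scale factors. This requires a careful case split between $r\le d/4$ (interior, all $n+1$ coefficients genuinely free) and $r>d/4$ (boundary, only $\ell_n^{X_0,r}$ free, $\ell_0^{X_0,r}$ rigidly determined by the Dirichlet condition, and $\ell_\de^{X_0,r}=0$ for $1\le\de\le n-1$), with the limit $d\to 0$ invoking the convention $d^{-\al}\ell_\de^{X_0,r}=0$. One must also verify that the error terms from Lemma~\ref{lem:g-A-diff-est} enter with the correct powers of $r$ and of the scale constant $\ka$ so as not to swamp the incremental $\sigma(r)$ bound; this is where the structure $\sigma\ge\eta=(\|x_n^\al\D u\|_\infty+\|\bg\|_\infty)\eta_A+\eta_\bg$ is used.
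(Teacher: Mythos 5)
Your proposal follows essentially the same route as the paper: the same algebraic identity relating $x_n^\al(\bar A^{n\de})^{X_0,r}D_\de(u-u_0^{X_0,r}-q^{X_0,r})$ to $U-\ell_n^{X_0,r}$ for \eqref{eq:U-l-diff-est}, the same triangle-inequality-plus-Lemma~\ref{lem:g-A-diff-est} control of $x_n^\al(q^{X_0,r}-q^{X_0,2r})$ followed by coefficient extraction and dyadic telescoping to a base scale where $\ell_\de=0$ ($\de\le n-1$) for \eqref{eq:l_i-est}, and the same $r^{-1}\vp(X_0,2r)+\eta(2r)$ bound for \eqref{eq:u-poly-grad-est}. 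The only execution point to be careful about is that a literal Caccioppoli inequality is $L^2$-based and does not apply at $p<1$; the paper instead splits $u-u_0^{X_0,r}-q^{X_0,r}=v^{X_0,r}+w^{X_0,r}$, handles $w^{X_0,r}$ by the weak type-$(1,1)$ estimates of Lemma~\ref{lem:weak-type-(1,1)-Diri}, and for the homogeneous part $v^{X_0,r}$ uses the gradient estimate \cite{DonPha21}*{(4.11)} with iteration, which is valid for all $p>0$ — your invocation of the weak type-$(1,1)$ lemma suggests you intend this splitting, but the "Caccioppoli" step should be replaced by it.
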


\begin{proof}
It is easily seen that $u^{X_0,r}:=u-u_0^{X_0,r}-q^{X_0,r}$ satisfies in $Q^+_{2r}(X_0)$
$$
D_\g(x_n^\al(\bar A^{\g\de})^{X_0,r}D_\de u^{X_0,r})-x_n^\al\partial_tu^{X_0,r}=D_\g(x_n^\al((\bar A^{\g\de})^{X_0,r}-A^{\g\de})D_\de u+g_\g-\bar g_\g^{X_0,r}))
$$
with $u^{X_0,r}=0$ on $Q_{2r}'(X_0)$ if $Q_{2r}(X_0)\cap Q'_4$ is nonempty. We decompose $u^{X_0,r}=w^{X_0,r}+v^{X_0,r}$ in a similar way as in the proof of Lemma~\ref{lem:bdry-est-Diri}. Then, by applying the weak type-(1,1) estimates in Lemma~\ref{lem:weak-type-(1,1)-Diri}, we obtain
\begin{align}
    \label{eq:w-X_0-est-1}\left(\dashint_{Q^+_{r}(X_0)}| x_n^\al w^{X_0,r}|^pd\mu_1\right)^{1/p}\lesssim r\eta(r),\quad \left(\dashint_{Q^+_{r}(X_0)}|x_n^\al\D w^{X_0,r}|^pd\mu_1\right)^{1/p}\lesssim \eta(r).
\end{align}
Regarding $v$, we have
\begin{align*}
    \left(\dashint_{Q^+_{r}(X_0)}|x_n^\al\D v^{X_0,r}|^pd\mu_1\right)^{1/p}&\lesssim \frac{1}{r}\left(\dashint_{Q^+_{\frac32r}(X_0)}|x_n^\al v^{X_0,r}|^pd\mu_1\right)^{1/p}\lesssim \sigma(r),
\end{align*}
where we applied \cite{DonPha21}*{(4.11)} with iteration in the first inequality and \eqref{eq:vp-bound-assump} and \eqref{eq:w-X_0-est-1} with the identity $v^{X_0,r}=u^{X_0,r}-w^{X_0,r}$ in the second inequality. Combining this with the second estimate in \eqref{eq:w-X_0-est-1} yields \eqref{eq:u-poly-grad-est}.

Next, we prove \eqref{eq:l_i-est} for $\de=1,\ldots,n-1$. To this end, we apply Lemma~\ref{lem:g-A-diff-est} to get
$$
\left(\dashint_{Q_r^+(X_0)}|x_n^\al(u_0^{X_0,r}-u_0^{X_0,2r})|^pd\mu_1\right)^{1/p}\lesssim r\eta(r).
$$
By using this estimate, together with \eqref{eq:vp-bound-assump} and $\sigma\ge\eta$, we obtain
\begin{align}\label{eq:q-l-est-1}\begin{split}
    &\left(\dashint_{Q_r^+(X_0)}|x_n^\al(q^{X_0,r}-q^{X_0,2r})|^pd\mu_1\right)^{1/p}\\
    &\lesssim \left(\dashint_{Q_r^+(X_0)}|x_n^\al(u-u_0^{X_0,r}-q^{X_0,r})|^pd\mu_1\right)^{1/p}+\left(\dashint_{Q_r^+(X_0)}|x_n^\al(u-u_0^{X_0,2r}-q^{X_0,2r})|^pd\mu_1\right)^{1/p}\\
    &\qquad+\left(\dashint_{Q_r^+(X_0)}|x_n^\al(u_0^{X_0,r}-u_0^{X_0,2r})|^pd\mu_1\right)^{1/p}\\
    &\lesssim \vp(X_0,r)+\vp(X_0,2r)+r\eta(r)\lesssim r\sigma(r).
\end{split}\end{align}
As $\ell_\de^{X_0,r}=\ell_\de^{X_0,2r}=0$ for any $1\le\de\le n-1$ when $r>d/4$, we may assume $r\le d/4$. We may also assume without loss of generality $X_0'=0$. We then observe
\begin{multline*}
    2(x_n/d)^\al(\ell_1^{X_0,r}-\ell_1^{X_0,2r})x_1\\
    =x_n^\al(q^{X_0,r}-q^{X_0,2r})(x)-x_n^\al(q^{X_0,r}-q^{X_0,2r})(-x_1,x_2,\ldots,x_n),
\end{multline*}
which combined with \eqref{eq:q-l-est-1} gives
\begin{align*}
    r|\ell_1^{X_0,r}-\ell_1^{X_0,2r}|\lesssim\left(\dashint_{Q_r^+(X_0)}|x_n^\al(q^{X_0,r}-q^{X_0,2r})|^pd\mu_1\right)^{1/p}\lesssim r\sigma(r).
\end{align*}
By repeating the above process for $2\le \de\le n-1$, we get the first inequality in \eqref{eq:l_i-est}. For the second one, we take $k\in\mathbb{N}$ such that $1/8< 2^kr\le1/4$. Since $d/4\le 1/8<2^kr$, we have $\ell_\de^{X_0,2^kr}=0$ for $1\le\de\le n-1$. We combine this and the first inequality in \eqref{eq:l_i-est} to conclude
\begin{align}\label{eq:l-bound-sum}
    |\ell_\delta^{X_0,r}|\le \sum_{i=0}^{k-1}|\ell_\delta^{X_0,2^ir}-\ell_\delta^{X_0,2^{i+1}r}|+|\ell_\delta^{X_0,2^kr}|\lesssim \int_0^1\frac{\sigma(\rho)}\rho d\rho.
\end{align}

It remains to prove \eqref{eq:U-l-diff-est} and \eqref{eq:l_i-est} for $\de=n$. From a direct computation, we have
\begin{align*}
&x_n^\al \sum_{\de=1}^n(\bar A^{n\de})^{X_0,r}D_\de(u-u_0^{X_0,r}-q^{X_0,r})\\
&=(U-\ell_n^{X_0,r})+(g_n-\bar g_n^{X_0,r})+x_n^\al((\bar A^{n\de})^{X_0,r}-A^{n\de})D_\de u.
\end{align*}
Combining this with \eqref{eq:u-poly-grad-est} implies \eqref{eq:U-l-diff-est}. This in turn gives 
\begin{align*}
    |\ell_n^{X_0,r}-\ell_n^{X_0,2r}|\lesssim\left(\dashint_{Q_r^+(X_0)}|U-\ell_n^{X_0,r}|^p+|U-\ell_n^{X_0,2r}|^pd\mu_1 \right)^{1/p}\lesssim \sigma(r).
\end{align*}
As above, let $k\in\mathbb{N}$ be such that $1/8<2^kr\le1/4$. Recall that $q^{X_0,2^kr}(x_n)=\int_0^ds^{-\al}\left((\bar A^{nn})^{X_0,2^kr}(s)\right)^{-1}\bar g_n^{X_0,2^kr}(s)\,ds+\int_0^{x_n}s^{-\al}\left((\bar A^{nn})^{X_0,2^kr}(s)\right)^{-1}\ell_n^{X_0,2^kr}ds$, and note that since $\al<1$, 
$$
\left(\dashint_{Q_{2^kr}^+(X_0)}\left|x_n^\al\int_0^ds^{-\al}\left((\bar A^{nn})^{X_0,r}(s)\right)^{-1}\bar g_n^{X_0,r}(s)
ds\right|^pd\mu_1\right)^{1/p}\lesssim \|\bg\|_\infty.
$$
Then we have
\begin{align*}
    |\ell_n^{X_0,2^kr}|&\lesssim \left(\dashint_{Q_{2^kr}^+(X_0)}|x_n^\al q^{X_0,2^kr}(x_n)|^pd\mu_1\right)^{1/p}+\|\bg\|_\infty\\
    &\lesssim \vp(X_0,2^kr)+\left(\dashint_{Q_{2^kr}^+(X_0)}|x_n^\al(u-u_0^{X_0,2^kr})|^pd\mu_1\right)^{1/p}+\|\bg\|_\infty\\
    &\lesssim \left(\dashint_{Q_{2^kr}^+(X_0)}|x_n^\al(u-u_0^{X_0,2^kr})|^pd\mu_1\right)^{1/p}+\|\bg\|_\infty\\
    &\lesssim \|\uu\|_\infty+\|\bg\|_\infty.
\end{align*}
Therefore, we conclude
\begin{align*}
    |\ell_n^{X_0,r}|\le \sum_{i=0}^{k-1}|\ell_n^{X_0,2^ir}-\ell_n^{X_0,2^{i+1}r}|+|\ell_n^{X_0,2^kr}|\lesssim\int_0^1\frac{\sigma(\rho)}\rho d\rho+\|\uu\|_\infty+\|\bg\|_\infty.
\end{align*}
This completes the proof.
\end{proof}


\begin{lemma}
    \label{lem:vp-est-Diri}
    If $X_0\in Q_{1/2}^+$ and $0<r<\ka/20$, then
    \begin{align}
        \label{eq:vp-est-Diri}
        \vp(X_0,r)\le Cr\hat\eta(r).
    \end{align}
\end{lemma}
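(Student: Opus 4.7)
The plan is to iterate the decay estimates of Lemmas~\ref{lem:vp-bdry-est-Diri} and \ref{lem:vp-int-est-Diri} starting from a reference scale $r_* := \ka/20$, with a case split according to whether $r$ lies above or below the transition $d/4$, where $d = (x_0)_n$. As a seed bound at $r_*$, taking $q = 0$ as a trivial admissible polynomial and using $|x_n^\al u_0^{X_0, r_*}| \lesssim x_n \|\bg\|_\infty$ (valid since $\al < 1$), together with the $L^\infty$-estimate from Lemmas~\ref{lem:gradient-unif-est-Diri}--\ref{lem:gradient-bound-Diri}, one obtains
$$
\vp(X_0, r_*) \le C r_* \Bigl( \|\uu\|_{1, \mu_1} + \|\bg\|_\infty + \int_0^1 \frac{\hat\eta_\bg(\rho)}{\rho}\,d\rho \Bigr),
$$
which matches the $r_*^{\be'}$-component of $\hat\eta(r_*)$ in \eqref{eq:Dini-tilde} up to constants.

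In the case $r \ge d/4$, set $\bar X_0 := (X_0', 0) \in Q'_{1/2}$ so that $Q_r^+(X_0) \subset Q_{Cr}^+(\bar X_0)$. Iterating Lemma~\ref{lem:vp-bdry-est-Diri} from $r_*$ down through $\rho_j := \ka^j r_*$ and using standard Campanato telescoping yields
$$
\vp(\bar X_0, \rho) \le C (\rho/r_*)^{1+\be'} \vp(\bar X_0, r_*) + C \rho \, \tilde\eta(\rho), \qquad 0 < \rho \le r_*,
$$
where the accumulated error is recognized as $\tilde\eta(\rho)$ via \eqref{eq:Dini-tilde} and \eqref{eq:tilde-eta}. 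To transfer this bound back to $X_0$ I would compare $\vp(X_0, r)$ with $\vp(\bar X_0, Cr)$; the admissible polynomial classes at the two base points are built from different averaged coefficients and right-hand sides, but their difference is controlled by Lemma~\ref{lem:g-A-diff-est} and contributes only an error of order $r \eta(r)$, which is absorbed into $r \hat\eta(r)$. The monotonicity of $\rho \mapsto \hat\eta(\rho)/\rho^{\be'}$ then delivers $\vp(X_0, r) \le C r \hat\eta(r)$.

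In the case $r < d/4$, iterate Lemma~\ref{lem:vp-int-est-Diri} from $r_0 := \min\{d/4, r_*\}$ down to $r$, obtaining
$$
\vp(X_0, r) \le C(r/r_0)^{1+\be'} \vp(X_0, r_0) + C r \tilde\eta(r).
$$
If $r_0 = r_*$, the seed bound closes the argument. Otherwise $r_0 = d/4 < r_*$, and the previous case applied at the scale $d/4$ gives $\vp(X_0, d/4) \le C (d/4) \hat\eta(d/4)$; using $r \le d/4$ together with the monotonicity of $\hat\eta(\rho)/\rho^{\be'}$ then yields the desired estimate.

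The main technical obstacle is handling the transition at $r \sim d/4$, where the definition of $\vp$ switches between $\bA_1^{X_0, r}$ (a single vector parameter) for $r > d/4$ and $\bA_2^{X_0, r}$ (with an additional $n-1$ linear parameters $\ell_1, \ldots, \ell_{n-1}$) for $r \le d/4$. Reconciling the two polynomial families without loss across this transition, together with bridging the averaging centers $X_0$ and $\bar X_0$ in the boundary step, relies on the coefficient bounds from Lemma~\ref{lem:l-i-bound} combined with the averaging estimates of Lemma~\ref{lem:g-A-diff-est}.
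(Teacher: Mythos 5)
Your proposal is correct and follows essentially the same route as the paper: iterate the boundary decay estimate to get $\vp(\bar X_0,R)\lesssim R\tilde\eta(R)$, transfer this to $X_0$ at scales $r\gtrsim d$ via the inclusion $Q_r^+(X_0)\subset Q_R^+(\bar X_0)$ together with Lemma~\ref{lem:g-A-diff-est} and the bounds on $\ell_\de^{X_0,R}$ from Lemma~\ref{lem:l-i-bound}, and then iterate the interior estimate below the transition scale using the monotonicity of $\rho\mapsto\hat\eta(\rho)/\rho^{\be'}$. The only cosmetic difference is that the paper places the case split at $r\gtrless\ka d/4$ rather than $d/4$ and defers the interior iteration to the corresponding lemma in \cite{DonJeo24}.
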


\begin{proof}
We assume without loss of generality $X'_0=0$, and write for simplicity $q^p=q^{0,p}$, $\bar g_n^\rho=\bar g_n^{0,\rho}$ and $(\bar A^{n\de})^\rho=(\bar A^{n\de})^{0,\rho}$. 

We first claim that for any $0<R<1/4$,
\begin{align}
    \label{eq:vp-bdry-est-2}
    \vp(0,R)\lesssim R\tilde\eta(R).
\end{align}
Indeed, we choose a nonnegative integer $j\ge0$ such that $\ka^{-j}R< 1/4\le \ka^{-(j+1)}R$. By applying \eqref{eq:vp-bdry-est-Diri}, we get
\begin{align*}
    \vp(0,R)&\le \ka^{(1+\be')j}\vp(0,\ka^{-j}R)+C\sum_{i=1}^j\ka^{(1+\be')(i-1)}\ka^{-i}R\eta(\ka^{-i}R)\\
    &\lesssim R^{1+\be'}\left(\dashint_{Q^+_{\ka^{-j}R}}|x_n^\al(u-u_0^{X_0,r})|^pd\mu_1\right)^{1/p}+R\sum_{i=1}^j\ka^{\be'i}\eta(\ka^{-i}R)\\
    &\lesssim (\|\uu\|_{1,\mu_1}+\|\bg\|_\infty)R^{1+\be'}+R\tilde\eta(R)\\
    &\lesssim R\tilde\eta(R).
\end{align*}

Next, we consider two cases either $r\ge \ka d/4$ or $0<r<\ka d/4$.

\medskip\noindent\emph{Case 1}. Suppose $r\ge\ka d/4$. For $R:=(1+4/\ka)r$, we have by \eqref{eq:vp-bdry-est-2},
\begin{align}\label{eq:vp-bdry-est-3}
\vp(0,R)\lesssim R\tilde\eta(R)\lesssim r\tilde\eta(r).
\end{align}
Then, by Lemmas~\ref{lem:gradient-unif-est-Diri} and \ref{lem:l-i-bound}, 
\begin{align}
    \label{eq:l-est}
    |\ell_\de^R|\lesssim \|\uu\|_{1,\mu_1}+\|\bg\|_\infty+\int_0^1\frac{\hat\eta(\rho)}\rho d\rho,\quad 1\le\de\le n.
\end{align}
Moreover, from $R=r+\frac{4r}\ka\ge r+d$, we see $Q_r^+(X_0)\subset Q_R^+$. Then, we use the minimality of $\vp$ and the equalities $\bar g_n^{X_0,r}=\bar g_n^r$ and $(\bar A^{nn})^{X_0,r}=(\bar A^{nn})^r$ to get
\begin{align*}
    &\vp(X_0,r)\\
    &\le\left(\dashint_{Q_r^+(X_0)}\left|x_n^\al\left(u-\int_0^{x_n}s^{-\al}\left((\bar A^{nn})^r(s)\right)^{-1}(\bar g_n^r(s)+\ell_n^R)ds\right)\right|^pd\mu_1\right)^{1/p}\\
    &\lesssim \left(\dashint_{Q_r^+(X_0)}\left|x_n^\al\left(u-\int_0^{x_n}s^{-\al}\left((\bar A^{nn})^R(s)\right)^{-1}(\bar g_n^R(s)+\ell_n^R)ds\right)\right|^pd\mu_1\right)^{1/p}\\
    &\qquad+\dashint_{Q_r^+(X_0)}\left|x_n^\al\int_0^{x_n}s^{-\al}\left((\bar A^{nn})^R(s)\right)^{-1}(\bar g_n^R(s)-\bar g_n^r(s))ds\right|d\mu_1\\
    &\qquad+\dashint_{Q_r^+(X_0)}\left|x_n^\al\int_0^{x_n}s^{-\al}\left(\left((\bar A^{nn})^r(s)\right)^{-1}-\left((\bar A^{nn})^R(s)\right)^{-1}\right)(\bar g_n^r(s)+\ell_n^R)ds\right|d\mu_1\\
    &\lesssim \vp(0,R)+r\hat\eta(r)\lesssim r\hat\eta(r),
\end{align*}
where we used Lemma~\ref{lem:g-A-diff-est}, \eqref{eq:vp-bdry-est-3}, \eqref{eq:l-est}, and $Q_r^+(X_0)\subset Q_R^+$.

\medskip\noindent\emph{Case 2}. If $0<r<\ka d/4$, then we can argue as in Case 2 in the proof of \cite{DonJeo24}*{Lemma~3.12} to obtain \eqref{eq:vp-est-Diri}.
\end{proof}

By combining the previous two lemmas, we obtain the following result, which provides crucial estimates for the proof of the main result, Theorem~\ref{thm:reg-time}, in this section. 

\begin{lemma}
    \label{lem:diff-est}
    For $X_0\in Q_{1/2}^+$ and $0<r<1/12$,
    \begin{align}
        \label{eq:diff-est}\begin{split}
        &|\ell_\de^{X_0,r}-\ell_\de^{X_0,2r}|\lesssim \hat\eta(r),\quad 1\le\de\le n,\\
        &|\ell_\de^{X_0,r}|\lesssim \|\uu\|_\infty+\|\bg\|_\infty+\int_0^1\frac{\eta(\rho)}\rho d\rho,\quad 1\le\de\le n,\\
        &\left(\dashint_{Q_r^+(X_0)}|x_n^\al D_\de u-(x_n/d)^\al\ell_\de^{X_0,r}|d\mu_1\right)^{1/p}\lesssim \hat\eta(r),\quad 1\le\de\le n-1,\\
        &\left(\dashint_{Q_r^+(X_0)}|U-\ell_n^{X_0,r}|^pd\mu_1\right)^{1/p}\lesssim\hat\eta(r),\\
        &|\ell_0^{X_0,r}-\ell_0^{X_0,2r}|\lesssim r\hat\eta(r),\\
        &|\uu(X_0)-\ell_0^{X_0,r}|\lesssim r\bar\eta(r).
    \end{split}\end{align}
\end{lemma}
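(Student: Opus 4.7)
The plan is to read estimates (1)--(4) as immediate corollaries of Lemmas~\ref{lem:vp-est-Diri} and \ref{lem:l-i-bound}, and then peel off the two $\ell_0$ estimates by exploiting the structural form of $q^{X_0,r}$ at the base point $x_0$.

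\emph{Step 1 (setup).} Lemma~\ref{lem:vp-est-Diri} gives $\vp(X_0,r)\le Cr\hat\eta(r)$ on $0<r<\ka/20$. Since $\hat\eta\ge\eta$ and $\hat\eta$ satisfies \eqref{eq:alm-mon}, the choice $\sigma:=C\hat\eta$ verifies the hypothesis \eqref{eq:vp-bound-assump} of Lemma~\ref{lem:l-i-bound}. The first two estimates of \eqref{eq:diff-est} then follow immediately from \eqref{eq:l_i-est}; the fourth is \eqref{eq:U-l-diff-est}. For the third, observe that for $1\le\de\le n-1$ we have $D_\de u_0^{X_0,r}=0$ and $D_\de q^{X_0,r}=d^{-\al}\ell_\de^{X_0,r}$, so $x_n^\al D_\de(u-u_0^{X_0,r}-q^{X_0,r})=x_n^\al D_\de u-(x_n/d)^\al\ell_\de^{X_0,r}$, and \eqref{eq:u-poly-grad-est} closes the estimate.

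\emph{Step 2 (estimate 5).} At $x=x_0$ the linear term in $x_\de-(x_0)_\de$ and the integral $\int_d^{x_n}$ both vanish, so $q^{X_0,r}(x_0)=d^{-\al}\ell_0^{X_0,r}$, and similarly at scale $2r$. Writing $q^{X_0,r}-q^{X_0,2r}$ as the sum of the constant $d^{-\al}(\ell_0^{X_0,r}-\ell_0^{X_0,2r})$ plus a linear piece $d^{-\al}\sum_{\de<n}(\ell_\de^{X_0,r}-\ell_\de^{X_0,2r})(x_\de-(x_0)_\de)$ plus the $x_n$-integral piece, and using the already-proved (1) together with $|x-x_0|\lesssim r$ and Lemma~\ref{lem:g-A-diff-est} to bound the last two contributions by $Cr\hat\eta(r)$ pointwise (after multiplying by $x_n^\al$ and using $x_n/d\approx 1$ on $Q_r^+(X_0)$), I would invoke \eqref{eq:q-l-est-1} (now read with $\sigma=C\hat\eta$) to reverse-triangle inequality out the constant and conclude $|\ell_0^{X_0,r}-\ell_0^{X_0,2r}|\lesssim r\hat\eta(r)$. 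The regime $r>d/4$, where $\ell_\de=0$ for $1\le\de\le n-1$ and $\ell_0^{X_0,r}$ is given by a single explicit integral over $[0,d]$ of $(\bar A^{nn})^{X_0,r}$, $\bar g_n^{X_0,r}$ and $\ell_n^{X_0,r}$, is handled directly by Lemma~\ref{lem:g-A-diff-est} combined with (1) for $\ell_n$.

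\emph{Step 3 (estimate 6).} Under the standing smoothness assumption on $A,\bg$, Lebesgue differentiation plus \eqref{eq:vp-est-Diri} implies $\ell_0^{X_0,2^{-k}r}\to \uu(X_0)$ as $k\to\infty$ for a.e.\ $X_0$ (in fact pointwise by the continuity from Theorem~\ref{thm:reg-Diri}). Telescoping with (5),
\begin{align*}
 |\uu(X_0)-\ell_0^{X_0,r}|\le \sum_{k=0}^\infty|\ell_0^{X_0,2^{-k}r}-\ell_0^{X_0,2^{-(k+1)}r}|\lesssim \sum_{k=0}^\infty 2^{-k-1}r\,\hat\eta(2^{-k-1}r)\lesssim r\bar\eta(r)
\end{align*}
by the very definition of $\bar\eta$ in \eqref{eq:Dini-tilde}. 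The main technical obstacle I anticipate is the boundary-to-interior transition in Step 2 at the critical scale $r\approx d/4$, since the form of $\mathbb{A}^{X_0,r}$ switches across this threshold and $\ell_\de$ for $1\le\de\le n-1$ jumps between $0$ and a nontrivial value; as in Case~2 of Lemma~\ref{lem:gradient-unif-est-Diri} (cf.\ \eqref{eq:q-est-2}), this single ``bad'' step in the telescope has to be absorbed by an extra mean oscillation term, which is what forces the appearance of $\bar\eta$ rather than $\hat\eta$ in estimate (6).
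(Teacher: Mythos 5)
Your proposal follows essentially the same route as the paper: estimates (1)--(4) are read off from Lemma~\ref{lem:l-i-bound} with $\sigma\approx\hat\eta$ (justified by Lemma~\ref{lem:vp-est-Diri}), estimate (5) is obtained by isolating the constant part of $q^{X_0,r}-q^{X_0,2r}$ via \eqref{eq:q-l-est-1}, and (6) is the telescope driven by $\ell_0^{X_0,2^{-k}r}\to\uu(X_0)$. Two points deserve correction. First, your closing diagnosis is off: $\bar\eta$ in (6) does not come from absorbing a ``bad'' transition step at $r\approx d/4$ — estimate (5) holds with the uniform bound $r\hat\eta(r)$ in both regimes, and $\bar\eta$ appears purely because the telescoping sum $\sum_k 2^{-k}r\,\hat\eta(2^{-k}r)$ equals $r\bar\eta(r)$ by definition \eqref{eq:Dini-tilde}, exactly as your own Step 3 computes; there is no analogue here of the extra oscillation term in \eqref{eq:q-est-2}. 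Second, the regime $r\gtrsim d$ of estimate (5) is where the real work sits and ``handled directly by Lemma~\ref{lem:g-A-diff-est}'' undersells it: the difference of the explicit formulas produces integrals of the form $\int_0^d (d/s)^\al\,|\bar g_n^{X_0,r}(s)-\bar g_n^{X_0,2r}(s)|\,ds$, which are not in the shape controlled by Lemma~\ref{lem:g-A-diff-est} (weighted cylinder averages of $x_n^\al\int_d^{x_n}(\cdots)s^{-\al}ds$). The paper converts them by a case analysis on the sign of $\al$, the inclusion $Q_{r/3}(0,d+r/3)\subset Q_{2r}^+$, and, for $\al<0$, Lemma~\ref{lem:partial-DMO-weight-rela}; this step needs to be carried out, not just cited.
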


\begin{proof}
The first four estimates in \eqref{eq:diff-est} follow from Lemma~\ref{lem:l-i-bound}, which is applicable thanks to Lemma~\ref{lem:vp-est-Diri}, and Lemma~\ref{lem:appen}. To prove the last two estimates, we assume without loss of generality $X'_0=0$, and consider two cases either $r\le \frac34d$ or $r>\frac34d$. 

\medskip\noindent\emph{Case 1.} If $r\le \frac34d$, we observe that
\begin{align*}
    &(x_n/d)^\al(\ell_0^{X_0,r}-\ell_0^{X_0,2r})\\
    &=x_n^\al(q^{X_0,r}-q^{X_0,2r})-(x_n/d)^\al\sum_{\de=1}^{n-1}(\ell_\de^{X_0,r}-\ell_\de^{X_0,2r})x_\de\\
    &\qquad-x_n^\al \int_d^{x_n}\Bigg[\left((\bar A^{nn})^{X_0,r}(s)\right)^{-1}\left(s^{-\al}\ell_n^{X_0,r}-d^{-\al}\sum_{\de=1}^{n-1}(\bar A^{n\de})^{X_0,r}(s)\ell_\de^{X_0,r}\right)\\
    &\qquad\qquad\qquad-\left((\bar A^{nn})^{X_0,2r}(s)\right)^{-1}\left(s^{-\al}\ell_n^{X_0,2r}-d^{-\al}\sum_{\de=1}^{n-1}(\bar A^{n\de})^{X_0,2r}(s)\ell_\de^{X_0,2r}\right)\Bigg]ds.
\end{align*}
We then use \eqref{eq:q-l-est-1}, Lemma~\ref{lem:g-A-diff-est}, and the first two estimates in \eqref{eq:diff-est}, along with $1/4\le x_n/d\le 7/4$, to obtain $|\ell_0^{X_0,r}-\ell_0^{X_0,2r}|\lesssim r\hat\eta(r)$.

\medskip\noindent\emph{Case 2.} Suppose $r>\frac34d$. In this case, we have
\begin{align*}
    &\ell_0^{X_0,r}=\int_0^d(d/s)^\al\left((\bar A^{nn})^{X_0,r}(s)\right)^{-1}(\ell_n^{X_0,r}+\bar g_n^{X_0,r}(s))ds,\\
    &\ell_0^{X_0,2r}=\int_0^d(d/s)^\al\left((\bar A^{nn})^{X_0,2r}(s)\right)^{-1}(\ell_n^{X_0,2r}+\bar g_n^{X_0,2r}(s))ds.
\end{align*}
Thus
\begin{align*}
    &\ell_0^{X_0,r}-\ell_0^{X_0,2r}\\
    &=\int_0^d(d/s)^\al\left((\bar A^{nn})^{X_0,r}(s)\right)^{-1}(\ell_n^{X_0,r}-\ell_n^{X_0,2r})ds\\
    &+\int_0^d(d/s)^\al\left((A^{nn})^{X_0,r}(s)\right)^{-1}(\bar g_n^{X_0,r}(s)-\bar g_n^{X_0,2r}(s))ds\\
    &+\int_0^d(d/s)^\al\left[\left((\bar A^{nn})^{X_0,r}(s)\right)^{-1}-\left((\bar A^{nn})^{X_0,2r}(s)\right)^{-1}\right](\ell_n^{X_0,2r}+\bar g_n^{X_0,2r}(s))ds\\
    &=:I+II+III.
\end{align*}
We have 
$$
|I|\lesssim \hat\eta(r)\int_0^d(d/s)^\al ds\lesssim d\hat\eta(r)\lesssim r\hat\eta(r).
$$
To bound $|II|$, we consider two cases either $\al\ge0$ or $\al<0$. 

If $\al\ge0$, then for any $x_n\in (d,1/2)$, we have $(d/x_n)^\al\le1$, thus
\begin{align*}
    |II|&\lesssim \int_0^{x_n}(x_n/s)^\al(d/x_n)^\al|\bar g_n^{X_0,r}(s)-\bar g_n^{X_0,2r}(s)|ds\\
    &\lesssim \int_0^{x_n}(x_n/s)^\al|\bar g_n^{X_0,r}(s)-\bar g_n^{X_0,2r}(s)|ds.
\end{align*}
Therefore, 
$$|II|\lesssim \dashint_{Q_{r/3}(0,d+r/3)}x_n^\al\int_0^{x_n}|\bar g_n^{X_0,r}(s)-\bar g_n^{X_0,2r}(s)|s^{-\al}dsd\mu_1(X).
$$
From $r>\frac34d$, we have $Q_{r/3}(0,d+r/3)\subset Q^+_{d+\frac23r}\subset Q^+_{2r}$. Thus, we have by Lemma~\ref{lem:g-A-diff-est},
$$
|II|\lesssim\dashint_{Q_{2r}^+}x_n^\al\int_0^{x_n}|\bar g_n^{X_0,r}(s)-\bar g_n^{X_0,2r}(s)|s^{-\al}dsd\mu_1\lesssim r\eta(r).
$$

Now, suppose $\al<0$. Then $(d/s)^\al<1$ for $s\in(0,d)$. Thus we have
$$
|II|\lesssim\int_0^d|\bar g_n^{X_0,r}(s)-\bar g_n^{X_0,2r}(s)|ds.
$$
We then argue as in the case when $\al\ge0$ and use Lemma~\ref{lem:partial-DMO-weight-rela} to deduce 
$$
|II|\lesssim\dashint_{Q_{2r}^+}\int_0^{x_n}|\bar g_n^{X_0,r}(s)-\bar g_n^{X_0,2r}(s)|dsdX\lesssim r\eta(r).
$$
Similarly, we can show that $|III|\lesssim r\hat\eta(r)$.

Finally, note that $(x_n^\al(u-u_0^{X_0,r}-q^{X_0,r}))(X_0)=(\uu-(x_n/d)^\al\ell_0^{X_0,r})(X_0)$, which combined with Lemma~\ref{lem:vp-est-Diri} implies that $\ell_0^{X_0,r}\to \uu(X_0)$ as $r\to0$. This and the fifth estimate in \eqref{eq:diff-est} yield the last estimate in \eqref{eq:diff-est}.    
\end{proof}

Now we are ready to prove Theorem~\ref{thm:reg-time}.

\begin{proof}[Proof of Theorem~\ref{thm:reg-time}]
It suffices to show that for any $(t_0,x_0)\in Q_{1/2}^+$ and $0<r<\ka/32$,
$$
|\uu(t_0,x_0)-\uu(t_0-r^2,x_0)|\lesssim r\bar\eta(r),
$$
where $\bar\eta$ is as in \eqref{eq:Dini-tilde}. To prove it, we write $X_0=(t_0,x_0)$ and $X_1=(t_0-r^2,x_0)$ and assume without loss of generality $x_0'=0$. From \eqref{eq:diff-est}, we have
\begin{align*}
    |\uu(X_0)-\uu(X_1)|&\le |\uu(X_0)-\ell_0^{X_0,2r}|+|\uu(X_1)-\ell_0^{X_1,2r}|+|\ell_0^{X_0,2r}-\ell_0^{X_1,2r}|\\
    &\le Cr\bar\eta(r)+|\ell_0^{X_0,2r}-\ell_0^{X_1,2r}|.
\end{align*}
Then, it is sufficient to show that
\begin{align}
    \label{eq:l_0-diff-center-est}
    |\ell_0^{X_0,2r}-\ell_0^{X_1,2r}|\le Cr\bar\eta(r).
\end{align}
To this end, we claim that for any $0<s<1/4$ and $1\le\de\le n$,
\begin{align}
    \label{eq:diff-center-est}
    \begin{split}
        &\dashint_{Q_r^+(X_1)}x_n^\al\int_d^{x_n}|\bar g_n^{X_0,2r}(s)-\bar g_n^{X_1,2r}(s)|s^{-\al}dsd\mu_1\lesssim r\bar\eta(r),\\
        &\dashint_{Q_r^+(X_1)}x_n^\al\int_d^{x_n}|(\bar A^{n\de})^{X_0,2r}(s)-(\bar A^{n\de})^{X_1,2r}(s)|s^{-\al}dsd\mu_1\lesssim r\bar\eta(r),\\
        &|\ell_\de^{X_0,2r}|+|\ell_\de^{X_1,2r}|\lesssim \|x_n^\al\D u\|_{\infty}+\|\bg\|_\infty+\int_0^1\frac{\eta(\rho)}\rho d\rho,\\
        &|\ell_\de^{X_0,2r}-\ell_\de^{X_1,2r}|\lesssim\bar\eta(r).
    \end{split}
\end{align}
Indeed, by applying the triangle inequality $|\bar g_n^{X_0,2r}-\bar g_n^{X_1,2r}|\le |\bar g_n^{X_0,2r}-\bar g_n^{X_0,4r}|+|\bar g_n^{X_0,4r}-\bar g_n^{X_1,2r}|$ and the inclusion $Q^+_{2r}(X_1)\subset Q^+_{4r}(X_0)$ and arguing as in the proof of Lemma~\ref{lem:g-A-diff-est}, we can obtain the first estimate in \eqref{eq:diff-center-est}. The second estimate follows in a similar way. The third one in \eqref{eq:diff-center-est} can be found in \eqref{eq:diff-est}. Regarding the last estimate, when $\de=n$, we use \eqref{eq:diff-est} to have
\begin{align*}
    |\ell_n^{X_0,2r}-\ell_n^{X_1,2r}|&\le \left(\dashint_{Q_r^+(X_1)}|\ell_n^{X_0,2r}-U|^p+|U-\ell_n^{X_1,2r}|^pd\mu_1\right)^{1/p}\\
    &\lesssim \left(\dashint_{Q_{2r}^+(X_0)}|\ell_n^{X_0,2r}-U|^pd\mu_1\right)^{1/p}+\left(\dashint_{Q_{2r}^+(X_1)}|U-\ell_n^{X_1,2r}|^pd\mu_1\right)^{1/p}\\
    &\lesssim \hat\eta(r).
\end{align*}
On the other hand, when $1\le\de\le n-1$, we use the triangle inequality $|x_n^\al(q^{X_0,2r}-q^{X_1,2r})|\le |x_n^\al(u-u_0^{X_0,2r}-q^{X_0,2r})|+|x_n^\al(u-u_0^{X_1,2r}-q^{X_1,2r})|+|x_n^\al(u_0^{X_0,2r}-u_0^{X_1,2r})|$ and apply \eqref{eq:vp-est-Diri} and the first two estimates in \eqref{eq:diff-center-est} to deduce
\begin{align}
    \label{eq:q-diff-center-est}
    \left(\dashint_{Q_r^+(X_1)}|x_n^\al(q^{X_0,2r}-q^{X_1,2r})|^pd\mu_1\right)^{1/p}\lesssim r\bar\eta(r).
\end{align}
By using this estimate and the fact that $X_0$ and $X_1$ have the same spatial components, i.e., $x_0=x_1$, we can follow the argument obtaining \eqref{eq:l_i-est} to obtain $|\ell_\de^{X_0,2r}-\ell_\de^{X_1,2r}|\lesssim\bar\eta(r)$, $1\le\de\le n-1$.

Now, by using \eqref{eq:diff-center-est}, \eqref{eq:q-diff-center-est}, and $x_0=x_1$, we can follow the argument deriving the fifth estimate in \eqref{eq:diff-est} to conclude \eqref{eq:l_0-diff-center-est}.    
\end{proof}

Before closing this section, we establish the continuity of $\frac{\delta_{t,h}\uu}{h^{1/2}}$ by using Theorem~\ref{thm:reg-time}.

\begin{theorem}
    \label{thm:diff-quo-conti}
    Let $u$, $A$, and $\bg$ be as in Theorem~\ref{thm:reg-Diri}, and $0<\be<1$. Then $\frac{\delta_{t,h}\uu}{h^{1/2}}\in C_X(\overline{Q_{1/2}^+})$ uniformly in $h\in(0,1/4)$.
\end{theorem}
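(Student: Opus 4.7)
The plan is to deduce the continuity of $\frac{\delta_{t,h}\uu}{h^{1/2}}$ in $X$, uniform in $h\in(0,1/4)$, by combining the two preceding regularity results via a standard ``split on $h$'' argument. The key inputs are: (i) Theorem~\ref{thm:reg-Diri}, which asserts that $\uu$ is Lipschitz in $\overline{Q_{1/2}^+}$ with respect to $x$; and (ii) Theorem~\ref{thm:reg-time}, which gives the pointwise bound $|\delta_{t,h}\uu(t,x)|/h^{1/2}\lesssim\omega_t(h)$, where $\omega_t(h)\to 0$ as $h\to 0$. Together these imply that $\uu$ is jointly continuous on the compact set $\overline{Q_{1/2}^+}$, hence uniformly continuous with some modulus $\omega_{\uu}$.

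Given $\varepsilon>0$, I would first fix a threshold $h_\ast\in(0,1/4)$ such that $2C\omega_t(h)<\varepsilon$ for all $h\le h_\ast$, using that $\omega_t$ vanishes at the origin. For $h\in(0,h_\ast]$ and any $X,Y\in\overline{Q_{1/2}^+}$, the triangle inequality applied to Theorem~\ref{thm:reg-time} at both $X$ and $Y$ gives
$$
\Bigl|\tfrac{\delta_{t,h}\uu(X)}{h^{1/2}}-\tfrac{\delta_{t,h}\uu(Y)}{h^{1/2}}\Bigr|\le\tfrac{|\delta_{t,h}\uu(X)|}{h^{1/2}}+\tfrac{|\delta_{t,h}\uu(Y)|}{h^{1/2}}\le 2C\omega_t(h)<\varepsilon,
$$
with no constraint on the distance between $X$ and $Y$. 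For the complementary range $h\in[h_\ast,1/4]$, I would bound the difference directly using the joint modulus of $\uu$: writing $X=(t,x)$ and $Y=(s,y)$, and noting that the parabolic distance between $(t-h,x)$ and $(s-h,y)$ equals that between $(t,x)$ and $(s,y)$,
$$
\Bigl|\tfrac{\delta_{t,h}\uu(X)-\delta_{t,h}\uu(Y)}{h^{1/2}}\Bigr|\le\tfrac{|\uu(t,x)-\uu(s,y)|+|\uu(t-h,x)-\uu(s-h,y)|}{h_\ast^{1/2}}\le\tfrac{2\omega_{\uu}(|x-y|+|t-s|^{1/2})}{h_\ast^{1/2}}.
$$
Choosing the spatial-temporal distance small enough renders this bound less than $\varepsilon$, uniformly in $h\ge h_\ast$, and combining the two regimes yields a single modulus of continuity independent of $h\in(0,1/4)$.

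The only technical point that deserves a careful check is the claim that the modulus $\omega_t$ in Theorem~\ref{thm:reg-time} actually tends to zero at $0$. This is built into the definition \eqref{eq:Dini-omega-time} via the constructions in \eqref{eq:Dini-tilde}: since $r\mapsto \hat\eta(r)/r^{\be'}$ is nonincreasing and $\hat\eta$ is Dini, a short argument (if $\hat\eta(r_k)\ge c>0$ along some $r_k\downarrow 0$, then $\int_0^{r_k}\hat\eta(\rho)/\rho\,d\rho\ge c/\be'$, contradicting Dini integrability) gives $\hat\eta(r)\to0$, and hence $\bar\eta(r)=\sum_k 2^{-k}\hat\eta(r/2^k)\to 0$ by dominated convergence. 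Every remaining step in the proof is a direct and purely elementary consequence of the bounds already established in this section, so no new regularity analysis is required.
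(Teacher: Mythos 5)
Your proof is correct, but it follows a genuinely different and more elementary route than the paper's. You establish equicontinuity qualitatively: for $h$ below an $\varepsilon$-dependent threshold $h_\ast$ you discard the difference entirely using the smallness $|\delta_{t,h}\uu|/h^{1/2}\lesssim\omega_t(h)$ from Theorem~\ref{thm:reg-time} (and your verification that $\omega_t(0+)=0$ via the monotonicity of $r\mapsto\hat\eta(r)/r^{\be'}$ and Dini integrability is the right argument), while for $h\ge h_\ast$ you divide the joint modulus of $\uu$ by $h_\ast^{1/2}$. This yields a common modulus of continuity for the family, which is all the statement literally asks for. The paper instead splits on $h\lessgtr\rho^2$, where $\rho$ is the size of the increment in $X$, and in the regime $h\ge\rho^2$ writes the spatial increment as $\rho\int_0^1 \delta_{t,h}D_\g\uu\,d\tau$, invoking the \emph{time}-modulus $\omega_x(h^{1/2})$ of $D_\g\uu$ from \eqref{eq:u-reg-Diri} (with the identity \eqref{eq:u_alpha} to handle $\g=n$) and the monotonicity of $r\mapsto\omega_x(r)/r$ to absorb the factor $\rho/h^{1/2}$. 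What the paper's version buys is an \emph{explicit} modulus of the form $\sigma_0(\rho)$ (resp.\ $\omega_t(\rho)$ for time increments), which is what underlies the quantitative claims around \eqref{eq:mod-conti-sigma} and is reused verbatim in the higher-order induction (e.g.\ in the proof of Theorem~\ref{thm:main-reg}, where the same scheme is repeated with $D_n\uu$ and $x_n^\al D_nu$ in place of $\uu$); your argument, by never differentiating, sidesteps the $\g=n$ technicalities but loses that quantitative information. Two minor points worth tightening: for $(t,x)\in\overline{Q_{1/2}^+}$ the shifted point $(t-h,x)$ may leave $\overline{Q_{1/2}^+}$, so $\omega_{\uu}$ should be taken on a slightly larger cylinder (the paper has the same implicit convention); and the "single modulus independent of $h$" in your last sentence should be read as the equicontinuity sup $\omega(\delta):=\sup_h\sup_{d(X,Y)\le\delta}|\cdot|$, since the small-$h$ regime contributes a bound depending on $h$ rather than on $d(X,Y)$.
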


\begin{proof}
We fix $0<\rho<1$, $1\le\g\le n$, and $(t,x)\in \overline{Q_{1/2}^+}$. We then let
$$
u^*_{\g,\rho}:=\frac{\delta_{t,h}\uu(t,x)-\delta_{t,h}\uu(t,x-\rho e_\g)}{h^{1/2}}.
$$
Suppose $h<\rho^2$. Since $\frac{|\delta_{t,h}\uu(t,x)|}{h^{1/2}}\lesssim \omega_t(h)$ by Theorem~\ref{thm:reg-time}, we have by Lemma~\ref{lem:appen},
\begin{align}
    \label{eq:U-sigma-est}
    |u^*_{\g,\rho}|\lesssim\omega_t(h)=\sum_{k=0}^\infty\frac1{2^k}\hat\eta(h^{1/2}/{2^k})\lesssim\sum_{k=0}^\infty\frac1{2^k}\int_0^{\frac{\rho}{2^k}}\frac{\hat\eta(s)}sds\lesssim \int_0^\rho\frac{\hat\eta(s)}sds\le \sigma_0(\rho).
\end{align}
On the other hand, when $h\ge\rho^2$, we have that for $\omega_x$ as in \eqref{eq:omega_x},
\begin{align*}
    |u^*_{\g,\rho}|&=\frac{|(\uu(t,x)-\uu(t,x-\rho e_\g))-(\uu(t-h,x)-\uu(t-h,x-\rho e_\g))|}{h^{1/2}}\\
    &\le \frac{\rho}{h^{1/2}}\int_0^1|D_\g \uu(t,x-\tau\rho e_\g)-D_\g \uu(t-h,x-\tau\rho e_\g)|d\tau\\
    &\lesssim\frac{\rho\,\omega_x(h^{1/2})}{h^{1/2}}.
\end{align*}
Here, in the last inequality, we used \eqref{eq:u-reg-Diri} when $1\le\g\le n-1$. When $\g=n$, we used
\begin{align}
    \label{eq:u_alpha}\begin{split}
    D_n\uu&=x_n^\al D_nu+\al x_n^{\al-1}u=x_n^\al D_nu+\al x_n^\al\int_0^1D_nu(t,x',x_n\tau)\,d\tau\\
    &=x_n^\al D_nu+\al\int_0^1(x_n^\al D_nu)(t,x',x_n\tau)\tau^{-\al}\,d\tau,
\end{split}\end{align}
and that $x_n^\al D_nu$ has a modulus of continuity bounded by $C\omega_x$ with respect to $t$ by \eqref{eq:u-reg-Diri}.

By using the above estimate and the monotonicity of $r\longmapsto \frac{\omega_x(r)}r$, which follows from that of  $r\longmapsto\frac{\hat\eta_\bullet(r)}{r^{\be'}}$ and $\be'<1$, we further have
$$
|u^*_{\g,\rho}|\lesssim \frac{\rho\omega_x(h^{1/2})}{h^{1/2}}\lesssim \omega_x(\rho).
$$
Therefore, $\frac{\delta_{t,h}\uu}{h^{1/2}}\in C_x(\overline{Q_{1/2}^+})$ uniformly in $h$.

Next, to prove $\frac{\delta_{t,h}\uu}{h^{1/2}}\in C_t(\overline{Q_{1/2}^+})$ uniformly in $h$, we let
$$
u^*_{t,\rho}:=\frac{\delta_{t,h}\uu(t,x)-\delta_{t,h}\uu(t-\rho,x)}{h^{1/2}}.
$$
If $h<\rho$, then we use Theorem~\ref{thm:reg-time} to get $|u^*_{t,\rho}|\lesssim\omega_t(h)$, and argue as in \eqref{eq:U-sigma-est} to derive $|u^*_{t,\rho}|\lesssim\sigma_0(\rho^{1/2})$. On the other hand, when $h\ge\rho$, we apply Theorem~\ref{thm:reg-time} to get
\begin{equation*}
    |u^*_{t,\rho}|\le \frac{|\de_{t,\rho}\uu(t,x)|+|\de_{t,\rho}\uu(t-h,x)|}{h^{1/2}}\lesssim \frac{\rho^{1/2}\omega_t(\rho)}{h^{1/2}}\lesssim\omega_t(\rho).
\end{equation*}
The theorem is proved.
\end{proof}


\section{Higher order Schauder type estimates}\label{sec:HO}
The objective of this section is to establish higher order Schauder type estimates for systems with partially DMO coefficients, Theorem~\ref{thm:main-reg}. To accomplish this, we first prove a similar result with Hölder coefficients in Section~\ref{subsec:HO-Holder}, and then use this result to achieve Theorem~\ref{thm:main-reg} in Section~\ref{subsec:HO-partiallyDMO}.

\subsection{Systems with Hölder coefficients}\label{subsec:HO-Holder}
As mentioned above, we derive higher-order Schauder type estimates when coefficients belong to Hölder spaces. Specifically, we prove the following:

\begin{theorem}\label{thm:higher-reg-Holder}
    Suppose $k\in \mathbb{N}$ and $0<\be<1$, and let $u$ be a solution of \eqref{eq:pde}. If $A$ satisfies \eqref{eq:assump-coeffi} and $A,\bg\in C^{\frac{k+\be-1}2,k+\be-1}(\overline{Q_1^+})$, then $\uu\in C^{\frac{k+\be}2,k+\be}(\overline{Q^+_{1/2}})$ and $x_n^\al Du\in C^{\frac{k+\be-1}2,k+\be-1}(\overline{Q_{1/2}^+})$.
\end{theorem}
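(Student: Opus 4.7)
I would prove the result by induction on $k$, using Section~\ref{sec:Schauder} for the base case and tangential differentiation of \eqref{eq:pde} for the inductive step.

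\emph{Base case $k=1$.} When $A, \bg \in C^{\be/2, \be}$, the quantities in Definition~\ref{def:partial-DMO} satisfy $\eta_A^{-\al}(r), \eta_\bg^{-\al}(r) \lesssim r^\be$, and Lemma~\ref{lem:partial-DMO-weight-rela} yields the same bound for $\eta_A^{-\al^+}, \eta_\bg^{-\al^+}$. All auxiliary Dini functions $\hat\eta_\bullet, \bar\eta$ built in Section~\ref{sec:Schauder} inherit this decay, so the moduli $\omega_x$ in \eqref{eq:omega_x} and $\omega_t$ in \eqref{eq:Dini-omega-time} obey $\omega_x(r) \lesssim r^\be$ and $\omega_t(r) \lesssim r^{\be/2}$. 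Lemma~\ref{lem:gradient-bound-Diri} delivers the $L^\infty$ bound on $D\uu$; the quantitative inequality \eqref{eq:u-reg-Diri} upgrades it to $x_n^\al Du \in C^{\be/2, \be}(\overline{Q^+_{1/2}})$; Theorem~\ref{thm:reg-time} gives $C^{(1+\be)/2}$ regularity of $\uu$ in $t$; and Theorem~\ref{thm:diff-quo-conti}, combined with the identity \eqref{eq:u_alpha}, yields $C^{\be/2}$ regularity of $D\uu$ in $t$. Together these furnish $\uu \in C^{(1+\be)/2, 1+\be}(\overline{Q^+_{1/2}})$.

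\emph{Inductive step, tangential directions.} Assume the theorem at level $k-1$ for all $\be \in (0,1)$. For $j \in \{1,\dots,n-1\}$, set $v := D_j u$. Differentiating \eqref{eq:pde} in $x_j$ shows that $v$ satisfies
\begin{align*}
    D_\g(x_n^\al A^{\g\de} D_\de v) - x_n^\al \partial_t v = \div \tilde \bg_j \ \text{ in } Q_1^+, \qquad v = 0 \ \text{ on } Q_1',
\end{align*}
with $\tilde g_{j,\g} := D_j g_\g - (D_j A^{\g\de})(x_n^\al D_\de u)$. By hypothesis $D_j A, D_j \bg \in C^{(k-2+\be)/2, k-2+\be}$, while the inductive hypothesis applied to $u$ gives $x_n^\al D u \in C^{(k-2+\be)/2, k-2+\be}$; hence $\tilde \bg_j$ belongs to the same class. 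Applying the inductive hypothesis to $v$ yields $D_j \uu = x_n^\al D_j u \in C^{(k-1+\be)/2, k-1+\be}$. Iterating this procedure produces the desired regularity for all tangential derivatives of $\uu$ and of $x_n^\al D u$.

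\emph{Normal direction and main obstacle.} For derivatives of $\uu$ involving the normal direction, I would exploit \eqref{eq:pde} as an algebraic identity. The definition of $U$ together with \eqref{eq:pde} yields
\begin{align*}
    D_n U = -\sum_{\g < n} D_\g(x_n^\al A^{\g\de} D_\de u) + x_n^\al \partial_t u + \sum_{\g < n} D_\g g_\g,
\end{align*}
expressing $D_n U$ entirely through tangential derivatives of quantities already controlled, while $x_n^\al D_n u = (A^{nn})^{-1}\bigl(U + g_n - \sum_{\de < n} x_n^\al A^{n\de} D_\de u\bigr)$ recovers $x_n^\al D_n u$ from $U$ and tangential data. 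Solving \eqref{eq:pde} for $x_n^\al \partial_t u$ handles the higher time regularity in a parallel manner. The principal obstacle is that direct expansion of $D_n^\ell(x_n^\al u)$ produces singular factors $x_n^{\al - m}$ whenever normal derivatives accumulate; one must show, in the spirit of Section~\ref{sec:Schauder}'s preference for the paired quantity $(x_n^\al D_{x'} u, U)$ over individual $x_n^\al D_\be u$, that these singularities cancel upon reassembly into derivatives of $\uu$. Carrying out this cancellation while quantifying H\"older norms at each inductive level is the main technical burden.
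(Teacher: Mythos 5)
Your base case and your tangential inductive step match the paper's proof (Theorems~\ref{thm:higher-reg-Holder-1} and Step~1 of Theorem~\ref{thm:higher-reg-Holder-2}), and your identity for $D_nU$ is exactly the one the paper uses in Step~3 to recover the normal derivatives; the "singular factor" obstacle you flag is resolved in the paper by the integral representation \eqref{eq:u_alpha}, which converts $D_n\uu$ and $D_{nn}\uu$ into nonsingular averages of $x_n^\al D_nu$ and $D_n(x_n^\al D_nu)$. The genuine gap is in your treatment of the time regularity. Your formula $D_nU=-\sum_{\g<n}D_\g(x_n^\al A^{\g\de}D_\de u)+x_n^\al\partial_tu+\sum_{\g<n}D_\g g_\g$ and your proposal to "solve \eqref{eq:pde} for $x_n^\al\partial_tu$" are the \emph{same} identity read in two directions: you cannot extract both $D_nU$ and $\partial_t\uu$ from it. One of the two needs an independent input, and in the paper that input is the time regularity, established \emph{before} the equation is used for $D_nU$.

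Concretely, two ingredients are missing. First, the half-integer Hölder exponents in time ($D_x^i\partial_t^j\uu\in C_t^{(1+\be)/2}$ for $i+2j=k-1$) cannot come from any algebraic manipulation of the equation; the paper obtains them by applying the level-$(k-1)$ result to the normalized difference quotient $u^h=\de_{t,h}u/h^{1/2}$, which solves \eqref{eq:pde-h} with right-hand side $\hat\bg^h$ bounded in $C^{\be/2,\be}$ \emph{uniformly in} $h$, and then invoking Stein's second-difference characterization (\cite{Ste70}*{Proposition~8, Ch.~5}) to pass from the resulting bound $|x_n^\al D_nu(t+h)-2x_n^\al D_nu(t)+x_n^\al D_nu(t-h)|\lesssim h^{(1+\be)/2}$ to genuine $C_t^{(1+\be)/2}$ regularity. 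This step is unavoidable already at $k=2$, where $\partial_tA$ and $\partial_t\bg$ need not exist, so one cannot differentiate the equation in time. Second, for $k\ge3$ your induction should also differentiate the equation in $t$: $u_t=\partial_tu$ solves \eqref{eq:pde-time-deriv} with data in $C^{(k+\be-2)/2,k+\be-2}$, and the induction hypothesis at level $k-1$ (two regularity levels below the target) gives $\partial_t\uu\in C^{(k+\be-1)/2,k+\be-1}$; only after that is the equation available to control $D_n(x_n^\al A^{n\de}D_\de u)$ and hence $D_{nn}\uu$. Without the difference-quotient machinery your argument cannot close even the step from $k=1$ to $k=2$.
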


In Theorem~\ref{thm:higher-reg-Holder}, we derive the regularity of $x_n^\al Du$ as well as $\uu$. This is because the regularity of $x_n^\al Du$ is needed in the inductive argument for proving the regularity of $\uu$ for $k\in\mathbb{N}$. We begin the proof by considering the case when $k=1$.

\begin{theorem}
    \label{thm:higher-reg-Holder-1}
    Theorem~\ref{thm:higher-reg-Holder} holds when $k=1$.
\end{theorem}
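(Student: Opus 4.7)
The plan is to deduce Theorem~\ref{thm:higher-reg-Holder-1} directly from the quantitative continuity estimates of Section~\ref{sec:Schauder}, by specializing every Dini function there to the power $r\mapsto r^\be$. Under the assumption $A,\bg\in C^{\be/2,\be}(\overline{Q_1^+})$, the partial mean oscillations from \eqref{eq:pDMO-def} satisfy $\eta_A^{-\al}(r),\eta_\bg^{-\al}(r)\lesssim r^\be$ (and likewise with $-\al^+$ in place of $-\al$, by Lemma~\ref{lem:partial-DMO-weight-rela}). Since $\be'=(1+\be)/2>\be$, a geometric-series estimate on \eqref{eq:tilde-eta}--\eqref{eq:hat-eta} yields $\hat\eta_\bullet(r)\lesssim r^\be$, whence $\omega_x(r)\lesssim r^\be$ in \eqref{eq:omega_x} and $\bar\eta(r)\lesssim r^\be$ in \eqref{eq:Dini-tilde}, so that $\omega_t(r)=\bar\eta(r^{1/2})\lesssim r^{\be/2}$ for the time modulus in \eqref{eq:Dini-omega-time}.

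With these specializations, the pointwise inequality \eqref{eq:u-reg-Diri} established in the proof of Theorem~\ref{thm:reg-Diri} immediately gives that the pair $(x_n^\al\D_{x'}u,U)$ lies in $C^{\be/2,\be}(\overline{Q_{1/2}^+})$. To upgrade to the full gradient $x_n^\al Du$, I would invert the definition $U=x_n^\al A^{n\de}D_\de u-g_n$ and write
$$
x_n^\al D_nu=(A^{nn})^{-1}\Bigl(U+g_n-x_n^\al\sum_{\de<n}A^{n\de}D_\de u\Bigr),
$$
which, since $A$ and $\bg$ are themselves in $C^{\be/2,\be}$ and $A^{nn}$ is uniformly invertible by \eqref{eq:assump-coeffi}, belongs to $C^{\be/2,\be}$ by the algebra structure of parabolic Hölder spaces; this proves the second claim.

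For the regularity of $\uu$, the tangential derivatives $D_{x'}\uu=x_n^\al\D_{x'}u$ have already been handled. For the normal derivative I would use the identity \eqref{eq:u_alpha},
$$
D_n\uu=x_n^\al D_nu+\al\int_0^1\tau^{-\al}(x_n^\al D_nu)(t,x',x_n\tau)\,d\tau,
$$
in which $\tau^{-\al}$ is integrable since $\al<1$. Hölder continuity of $x_n^\al D_nu$ in both variables transfers under the integral via the trivial bound $|(x',x_n\tau)-(y',y_n\tau)|\le|x-y|$ and uniform continuity in $t$, yielding $D_n\uu\in C^{\be/2,\be}$. Finally, Theorem~\ref{thm:reg-time} specialized to $\omega_t(r)\lesssim r^{\be/2}$ furnishes
$$
|\uu(t,x)-\uu(t-h,x)|\lesssim h^{1/2}\omega_t(h)\lesssim h^{(1+\be)/2},
$$
which, together with the $L^\infty$ bounds from Lemmas~\ref{lem:gradient-unif-est-Diri} and \ref{lem:gradient-bound-Diri}, completes the verification that $\uu\in C^{(1+\be)/2,1+\be}(\overline{Q_{1/2}^+})$.

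The main obstacle is not conceptual but rather bookkeeping: one must verify that each intermediate bound controls space and time \emph{jointly} in the correct parabolic balance. In particular, the algebraic inversion that produces $x_n^\al D_nu$ would not deliver Hölder regularity in time without the already-joint estimate in \eqref{eq:u-reg-Diri}, and the integral identity for $D_n\uu$ similarly requires simultaneous control of $x_n^\al D_nu$ in both variables against the singular weight $\tau^{-\al}$ near $\tau=0$.
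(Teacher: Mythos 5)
Your proposal is correct and follows essentially the same route as the paper: the paper's own (two-line) proof likewise reads off $x_n^\al\D_{x'}u,\,x_n^\al D_nu\in C^{\be/2,\be}$ and $\uu\in C_t^{(1+\be)/2}$ from the quantitative moduli in the proofs of Theorems~\ref{thm:reg-Diri} and \ref{thm:reg-time} (with the Dini functions specialized to $r^\be$, exactly as you compute), and then uses the identity \eqref{eq:u_alpha} to pass from $x_n^\al D_nu$ to $D_n\uu$. Your additional bookkeeping — the geometric-series bounds $\hat\eta_\bullet(r)\lesssim r^\be$, the algebraic inversion of $U$ using the uniform invertibility of $A^{nn}$, and the integrability of $\tau^{-\al}$ for $\al<1$ — is all sound and simply makes explicit what the paper leaves implicit.
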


\begin{proof}
    From the proofs of Theorems~\ref{thm:reg-Diri} and \ref{thm:reg-time}, we infer $D_{x'}\uu=x_n^\al D_{x'}u\in C^{\be/2,\be}$, $x_n^\al D_nu\in C^{\be/2,\be}$, and $\uu\in C^{\frac{1+\be}2}_t$. Moreover, due to \eqref{eq:u_alpha}, $x_n^\al D_nu\in C^{\be/2,\be}$ implies $D_n\uu\in C^{\be/2,\be}$.
\end{proof}

\begin{theorem}
    \label{thm:higher-reg-Holder-2}
    Theorem~\ref{thm:higher-reg-Holder} holds when $k=2$.
\end{theorem}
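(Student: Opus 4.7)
The plan is to prove Theorem~\ref{thm:higher-reg-Holder-2} by differentiating \eqref{eq:pde} in tangential spatial directions, feeding the result back into the base case Theorem~\ref{thm:higher-reg-Holder-1}, and then closing up the missing normal-direction and time regularity through the equation itself.

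\emph{Step 1 (tangential differentiation).} For each $\g'\in\{1,\ldots,n-1\}$, set $v:=D_{\g'}u$. Since $x_n^\al$ is independent of $x_{\g'}$ and $u|_{Q_1'}=0$ implies $v|_{Q_1'}=0$, differentiating \eqref{eq:pde} in $x_{\g'}$ yields
\[
  D_\g(x_n^\al A^{\g\de}D_\de v)-x_n^\al\partial_t v=\div\tilde\bg^{(\g')}\quad\text{in }Q_1^+,\qquad v=0\text{ on }Q_1',
\]
with $\tilde g^{(\g')}_\g:=D_{\g'}g_\g-\sum_\de(D_{\g'}A^{\g\de})(x_n^\al D_\de u)$. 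By Theorem~\ref{thm:higher-reg-Holder-1}, $x_n^\al\D u\in C^{\be/2,\be}$, and the hypotheses give $D_{\g'}A,\,D_{\g'}\bg\in C^{\be/2,\be}$; hence $\tilde\bg^{(\g')}\in C^{\be/2,\be}$. A second application of Theorem~\ref{thm:higher-reg-Holder-1}, now to $v$, yields
\[
  D_{\g'}\uu=v_\al=x_n^\al D_{\g'}u\in C^{(1+\be)/2,\,1+\be},\qquad x_n^\al\D(D_{\g'}u)\in C^{\be/2,\be}
\]
for every $\g'<n$. In particular, writing $U:=x_n^\al A^{n\de}D_\de u-g_n$, we obtain $D_{\g'}U\in C^{\be/2,\be}$ for all $\g'<n$.

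\emph{Step 2 (normal and time regularity via the equation).} Rearranging \eqref{eq:pde},
\[
  \partial_t\uu=D_nU+\sum_{\g'<n}D_{\g'}\bigl(x_n^\al A^{\g'\de}D_\de u-g_{\g'}\bigr),
\]
in which the tangential sum lies in $C^{\be/2,\be}$ by Step 1 and the Hölder hypotheses on $A$ and $\bg$. To disentangle $\partial_t\uu$ from $D_nU$, I rerun the Campanato-iteration machinery of Section~\ref{sec:Schauder} at the next order: the improved inputs $(x_n^\al\D_{x'}v,\,x_n^\al A^{n\de}D_\de v-\tilde g_n^{(\g')})\in C^{\be/2,\be}$ obtained in Step 1 (for each tangential derivative $v=D_{\g'}u$) serve as the Hölder data, so the analogues of the growth estimates for $\psi$ and $\vp$ (Lemmas~\ref{lem:bdry-est-Diri}--\ref{lem:u-q-diff-est} and \ref{lem:vp-bdry-est-Diri}--\ref{lem:diff-est}) improve by one order. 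This separately forces $\partial_t\uu,\,D_nU\in C^{\be/2,\be}$, and hence $U\in C^{(1+\be)/2,\,1+\be}$. Uniform ellipticity of $A^{nn}$ together with the identity $A^{nn}(x_n^\al D_nu)=U+g_n-\sum_{\de<n}A^{n\de}(x_n^\al D_\de u)$ then gives $x_n^\al D_nu\in C^{(1+\be)/2,\,1+\be}$, so $x_n^\al\D u\in C^{(1+\be)/2,\,1+\be}$ as required.

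\emph{Step 3 (assembling) and main obstacle.} From Step 1, $D_{\g'}\uu\in C^{(1+\be)/2,\,1+\be}$ for $\g'<n$. Combining Step 2 with the identity $D_n\uu=x_n^\al D_nu+\al\int_0^1(x_n^\al D_nu)(\cdot,\tau x_n)\tau^{-\al}\,d\tau$ from \eqref{eq:u_alpha}, which is well-defined and regularity-preserving thanks to $\al<1$, we obtain $D_n\uu\in C^{(1+\be)/2,\,1+\be}$. Together with $\partial_t\uu\in C^{\be/2,\be}$, this yields $\uu\in C^{(2+\be)/2,\,2+\be}$, completing the induction step. The hardest part is Step 2: since $A$ and $\bg$ are merely $C^{(1+\be)/2}_t$, direct $t$-differentiation of the PDE is unavailable, so one cannot simply apply Theorem~\ref{thm:higher-reg-Holder-1} to $\partial_tu$; the separation of $\partial_t\uu$ from $D_nU$ must instead come from revisiting the Campanato iteration of Section~\ref{sec:Schauder} with the improved tangential input.
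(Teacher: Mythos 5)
Your Steps 1 and 3 match the paper's strategy (tangential differentiation fed back into the $k=1$ case, then the equation used to recover the normal second derivative), but Step 2 contains a genuine gap, and it is exactly at the point you yourself flag as the hardest. You correctly observe that $\partial_t\uu$ and $D_nU$ cannot be separated from the equation alone and that one cannot differentiate the PDE in $t$. However, your proposed fix --- ``rerun the Campanato iteration of Section~\ref{sec:Schauder} at the next order, so the growth estimates for $\psi$ and $\vp$ improve by one order'' --- is an assertion, not an argument. The quantity $\vp$ in \eqref{eq:phi-def} compares $x_n^\al u$ with functions that are affine in $x'$ but \emph{constant in $t$}; its iteration yields only the half-order time regularity $|\de_{t,h}\uu|\lesssim h^{1/2}\omega_t(h)$ of Theorem~\ref{thm:reg-time}. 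To extract $\partial_t\uu\in C^{\be/2,\be}$ from a Campanato scheme one would have to enlarge the approximating class to include a linear term in $t$ and re-derive all of Lemmas~\ref{lem:vp-bdry-est-Diri}--\ref{lem:diff-est} for that class; improved \emph{tangential} Hölder data does not by itself upgrade the \emph{time} decay rate of $\vp$, and you give no mechanism by which it would. The same gap affects your claim $U\in C^{(1+\be)/2,1+\be}$: knowing $D_{\g'}U, D_nU\in C^{\be/2,\be}$ controls the spatial seminorms but not the seminorm $[U]_t^{((1+\be)/2)}$, and $U$ does not itself solve a parabolic equation from which that could be read off.

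The missing idea in the paper is the half-order time difference quotient: set $u^h:=\de_{t,h}u/h^{1/2}$ and observe that $u^h$ solves \eqref{eq:pde-h} with data $\hat\bg^h = \bg^h - x_n^\al A^h D u(\cdot-h,\cdot)$ that is bounded in $C^{\be/2,\be}$ \emph{uniformly in $h$}, precisely because $A,\bg\in C^{\frac{1+\be}2,1+\be}$ makes $A^h,\bg^h$ uniformly $C^{\be/2,\be}$ even though $\partial_tA,\partial_t\bg$ need not exist. Applying the $k=1$ theorem to $u^h$ gives $\|x_n^\al u^h\|_{C^{\frac{1+\be}2,1+\be}}$ and $\|x_n^\al Du^h\|_{C^{\be/2,\be}}$ bounded uniformly in $h$, from which one extracts $\partial_t\uu\in C^{\be/2,\be}$, $D_n\uu\in C^{\frac{1+\be}2}_t$, and (via a second difference in $t$ and the Zygmund-type criterion in \cite{Ste70}) $x_n^\al D_nu\in C^{\frac{1+\be}2}_t$. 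With $\partial_t\uu\in C^{\be/2,\be}$ in hand, the paper then runs your Step 3 in the \emph{opposite} direction: the equation gives $D_n(x_n^\al A^{n\de}D_\de u)=-\sum_{\g<n}D_\g(x_n^\al A^{\g\de}D_\de u)+\partial_t\uu+\div\bg\in C^{\be/2,\be}$, whence $D_nU\in C^{\be/2,\be}$ and $D_{nn}\uu\in C^{\be/2,\be}$. So the time regularity is the input to the normal regularity, not something to be disentangled from it. Without the difference-quotient device (or an actual construction of a second-order-in-time Campanato scheme), your Step 2 does not close.
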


\begin{proof}
To prove Theorem~\ref{thm:higher-reg-Holder-2}, it suffices to show that $D_{x'}\uu\in C^{\frac{1+\be}2,1+\be}$, $\partial_t\uu\in C^{\be/2,\be}$, $D_n\uu\in C^{\frac{1+\be}2}_t$, $D_{nn}\uu\in C^{\be/2,\be}$, and $x_n^\al D_nu\in C^{\frac{1+\be}2,1+\be}$. 

\medskip\noindent\emph{Step 1.} From \eqref{eq:pde}, we see that $D_{x'}u$ solves
\begin{align}
    \label{eq:pde-tan-deriv}
    \begin{cases}
        D_\g(x_n^\al A^{\g\de}D_\de(D_{x'}u))-x_n^\al\partial_t(D_{x'}u)=\div\bar\bg&\text{in } Q_1^+,\\
        D_{x'}u=0&\text{on }Q_1',
    \end{cases}
\end{align}
where $\bar g_\g:=D_{x'}g_\g-x_n^\al D_{x'} A^{\g\de}D_\de u$, $1\le\g\le n$ (\eqref{eq:pde-tan-deriv} can be justified rigorously by using the finite difference quotient). Since $x_n^\al D u\in C^{\be/2,\be}$ by Theorem~\ref{thm:higher-reg-Holder-1}, we have $\bar\bg\in C^{\be/2,\be}$. We then apply Theorem~\ref{thm:higher-reg-Holder-1} to $D_{x'}u$ to obtain $D_{x'}\uu\in C^{\frac{1+\be}2,1+\be}$ and $x_n^\al DD_{x'}u\in C^{\be/2,\be}$.

\medskip\noindent\emph{Step 2.} For a fixed $h\in (0,1/4)$, we let
\begin{align*}
    u^h(t,x):=\frac{u(t,x)-u(t-h,x)}{h^{1/2}}=\frac{\de_{t,h}u(t,x)}{h^{1/2}}.
\end{align*}
We define $\bg^h$ and $(A^{\g\de})^h$ in a similar way. From \eqref{eq:pde}, we deduce
\begin{align}
    \label{eq:pde-h}
    \begin{cases}
        D_\g(x_n^\al A^{\g\de}D_\de u^h)-x_n^\al \partial_tu^h=\div\hat\bg^h&\text{in }Q_1^+,\\
        u^h=0&\text{on }Q'_1,
    \end{cases}
\end{align}
where $\hat g_\g^h(t,x):=g_\g^h(t,x)-x_n^\al(A^{\g\de})^h(t,x)D_\de u(t-h,x)$, $1\le\g\le n$. Recall $x_n^\al Du\in C^{\be/2,\be}$. Moreover, we have $\|\bg^h\|_{C^{\be/2,\be}}\le C\|\bg\|_{C^{\frac{1+\be}2,1+\be}}$ and $\|(A^{\g\de})^h\|_{C^{\be/2,\be}}\le C\|A^{\g\de}\|_{C^{\frac{1+\be}2,1+\be}}$ for some constant $C>0$, independent of $h$; see Step 2 in the proof of \cite{DonJeo24}*{Theorem~4.2}. Thus, $\|\hat\bg^h\|_{C^{\be/2,\be}}\le C(\|\uu\|_{1,\mu_1}+\|\bg\|_{C^{\frac{1+\be}2,1+\be}})$. Therefore, by Theorem~\ref{thm:higher-reg-Holder-1}, $x_n^\al u^h\in C^{\frac{1+\be}2,1+\be}(\overline{Q^+_{1/2}})$ and $x_n^\al Du^h\in C^{\be/2,\be}(\overline{Q_{1/2}^+})$ with 
\begin{align}
    &\label{eq:u^h-est} \|x_n^\al u^h\|_{C^{\frac{1+\be}2,1+\be}(\overline{Q_{1/2}^+})}\le C(\|\uu\|_{1,\mu_1}+\|\bg\|_{C^{\frac{1+\be}2, 1+\be}}),\\
    &\label{eq:u^h-est-2}  \|x_n^\al Du^h\|_{C^{\be/2,\be}(\overline{Q_{1/2}^+})}\le C(\|\uu\|_{1,\mu_1}+\|\bg\|_{C^{\frac{1+\be}2, 1+\be}}).
\end{align}
Observe that
$$
\uu^h(t,x):=\frac{\uu(t,x)-\uu(t-h,x)}{h^{1/2}}=x_n^\al u^h.
$$
Then, with this identity and \eqref{eq:u^h-est} at hand, we can follow Step 3 in the proof of \cite{DonJeo24}*{Theorem~4.2} to obtain $D_n\uu\in C^{\frac{1+\be}2}_t$ and $\partial_t\uu\in C^{\be/2,\be}$. Moreover, \eqref{eq:u^h-est-2} gives
$$
|x_n^\al D_nu(t+h,x)-2x_n^\al D_nu(t,x)+x_n^\al D_nu(t-h,x)|\le C(\|\uu\|_{1,\mu_1}+\|\bg\|_{C^{\frac{1+\be}2, 1+\be}})h^{\frac{1+\be}2},
$$
thus $x_n^\al D_nu\in C^{\frac{1+\be}2}_t$ by \cite{Ste70}*{Proposition~8 in Chapter 5}.

\medskip\noindent\emph{Step 3.} To complete the proof, it remains to show $D_{nn}\uu\in C^{\be/2,\be}$ and $D(x_n^\al D_nu)\in C^{\be/2,\be}$. To prove them, we first claim that
\begin{align}
    \label{eq:normal-deriv-Holder}
    D_n(x_n^\al D_nu)\in C^{\be/2,\be}.
\end{align}
Indeed, by using $x_n^\al DD_{x'}u\in C^{\be/2,\be}$, $\partial_tu_\al\in C^{\be/2,\be}$, Theorem~\ref{thm:higher-reg-Holder-1}, and \eqref{eq:pde}, we obtain
\begin{align*}
    D_n(x_n^\al A^{n\de}D_\de u)&=-\sum_{\g=1}^{n-1}D_\g(x_n^\al A^{\g\de}D_\de u)+\partial_tu_\al+\div\bg\\
    &=-\sum_{\g=1}^{n-1}(x_n^\al A^{\g\de}D_{\g\de}u+x_n^\al D_\g A^{\g\de}D_\de u)+\partial_t u_\al+\div \bg\in C^{\be/2,\be}.
\end{align*}
This, along with $x_n^\al DD_{x'}u\in C^{\be/2,\be}$, $D_{x'}u_\al\in C^{\frac{1+\be}2,1+\be}$, and $x_n^\al D_nu\in C^{\be/2,\be}$, implies \eqref{eq:normal-deriv-Holder}.

Now, $D(x_n^\al D_nu)\in C^{\be/2,\be}$ follows from \eqref{eq:normal-deriv-Holder} and $x_n^\al DD_{x'}u\in C^{\be/2,\be}$. Moreover, by \eqref{eq:u_alpha}, we have
$$
D_{nn}\uu(X)=D_n(x_n^\al D_nu)+\al\int_0^1D_n(x_n^\al D_nu)(t,x',x_n\tau)\tau^{1-\al}d\tau.
$$
Thus, $D_{nn}\uu\in C^{\be/2,\be}$ by \eqref{eq:normal-deriv-Holder}.
\end{proof}

We now establish Theorem~\ref{thm:higher-reg-Holder}. Thanks to the previous two theorems, it remains to prove the result for $k\ge 3$. We prove it by induction on $k$.

\begin{proof}[Proof of Theorem~\ref{thm:higher-reg-Holder}]
We assume that the theorem holds for $1,2,\ldots, k$ for some $k\ge2$, and prove it for $k+1$. To this end, it is enough to show
\begin{align*}
    &D_{x'}u_\al\in C^{\frac{k+\be}2,k+\be},\quad \partial_tu_\al\in C^{\frac{k+\be-1}2,k+\be-1},\quad D_{nn}u_\al\in C^{\frac{k+\be-1}2,k+\be-1},\\
    &x_n^\al D_nu\in C^{\frac{k+\be}2,k+\be}.
\end{align*}
By using the induction hypothesis for $k$ and arguing as in Step 1 in the proof of Theorem~\ref{thm:higher-reg-Holder-2}, we get $D_{x'}u_\al\in C^{\frac{k+\be}2,k+\be}$. Note that $x_n^\al Du\in C^{\frac{k+\be-1}2,k+\be-1}$ by the induction hypothesis for $k$. Since $u_t=\partial_tu$ solves
\begin{align}
    \label{eq:pde-time-deriv}
    \begin{cases}
        D_\g(x_n^\al A^{\g\de}D_\de u_t)-x_n^\al \partial_tu_t=\div\tilde\bg&\text{in }Q_1^+,\\
        u_t=0&\text{on }Q'_1,
    \end{cases}
\end{align}
where $\tilde g_\g:=\partial_t g_\g-x_n^\al \partial_tA^{\g\de}D_\de u\in C^{\frac{k+\be-2}2,k+\be-2}$, we have $\partial_tu_\al=x_n^\al u_t\in C^{\frac{k+\be-1}2,k+\be-1}$ and $x_n^\al D_n(u_t)\in C^{\frac{k+\be-2}2,k+\be-2}$ by the induction hypothesis for $k-1$. Moreover, by arguing as in Step 3 in the proof of Theorem~\ref{thm:higher-reg-Holder-2}, we obtain $D(x_n^\al D_nu)\in C^{\frac{k+\be-1}2,k+\be-1}$ and $D_{nn}u_\al\in C^{\frac{k+\be-1}2,k+\be-1}$. Finally, combining $D(x_n^\al D_nu)\in C^{\frac{k+\be-1}2,k+\be-1}$ with $\partial_t(x_n^\al D_nu)=x_n^\al D_nu_t\in C^{\frac{k+\be-2}2,k+\be-2}$ implies $x_n^\al D_nu\in C^{\frac{k+\be}2,k+\be}$.    
\end{proof}


\subsection{Systems with partially DMO coefficients}\label{subsec:HO-partiallyDMO}

In this section, we complete the proof of our central result on Schauder type estimates, Theorem~\ref{thm:main-reg}. The case $k=1$ directly follows from Theorems \ref{thm:reg-Diri}, \ref{thm:reg-time}, and \ref{thm:diff-quo-conti}. Then, it suffices to prove Theorem~\ref{thm:main-reg} for $k=2$, as the case $k\ge3$ can be established by induction as in the proof of Theorem~\ref{thm:higher-reg-Holder}.

We define $\eta_1, \tilde\eta_1, \hat\eta_1, \bar\eta_1$, and $\omega_{t,1}$ in the same way as in \eqref{eq:Dini-tilde}-\eqref{eq:Dini-omega-time}, but with $\eta_A$ and $\eta_\bg$ replaced by $\eta_{A,1}$ and $\eta_{\bg,1}$. Similarly, we define $\omega_{x,1}(r)$ be as in \eqref{eq:omega_x} with $\hat\eta_A$, $\hat\eta_\bg$, and $\hat\eta$ replaced by $\hat\eta_{A,1}$, $\hat\eta_{\bg,1}$, and $\hat\eta_1$. Note that $\omega_{x,1}(r)\lesssim \sigma_1(r)$ and $\omega_{t,1}(r)\lesssim\sigma_1(r^{1/2})$, where $\sigma_1$ is as in \eqref{eq:mod-conti-sigma}.

\begin{proof}[Proof of Theorem~\ref{thm:main-reg}]
As mentioned above, it is enough to prove the theorem for $k=2$. For this, we need to show $D_{x'}\uu\in \mathring{C}^{1/2,1}$, $\partial_t\uu\in \mathring{C}$, $D_{nn}\uu\in \mathring{C}$, $x_n^\al D_nu\in \mathring{C}^{1/2,1}$, and
\begin{align}
    \label{eq:diff-quo-reg}
    \lim_{h\to0}\frac{\de_{t,h}D_n\uu}{h^{1/2}}(t,x)=0\quad\text{and}\quad\frac{\de_{t,h}D_n\uu}{h^{1/2}}\in C_X(\overline{Q_{1/2}^+})
\end{align}
uniformly in $(t,x)\in \overline{Q_{1/2}^+}$ and $h\in (0,1/4)$, respectively.

We first prove $D_{x'}\uu\in \mathring{C}^{1/2,1}$. To this end, we recall that $D_{x'}u$ satisfies \eqref{eq:pde-tan-deriv} with $\bar g_\g=D_{x'}g_\g-x_n^\al D_{x'}A^{\g\de}D_\de u$. Since $u$ solves \eqref{eq:pde} and $A,\bg\in C^{1/2,1}$, we have $x_n^\al Du\in C^{\be/2,\be}$ for any $0<\be<1$ by Theorem~\ref{thm:higher-reg-Holder}. This gives $\bar g_\g\in\cH$, hence $D_{x'}\uu\in \mathring{C}^{1/2,1}$ and $x_n^\al DD_{x'}u\in \mathring{C}$ by the case when $k=1$.

Next, for $h\in (0,1/4)$, let $u^h$, $(A^{\g\de})^h$, $\bg^h$, and $\uu^h$ be as in Step 2 in the proof of Theorem~\ref{thm:higher-reg-Holder-2}. Then $u^h$ solves \eqref{eq:pde-h} with $\hat g_\g^h(t,x)=g_\g^h(t,x)-x_n^\al (A^{\g\de})^h(t,x)D_\de u(t-h,x)\in\cH$, thus we have by Theorem~\ref{thm:reg-time} that
$$
\frac{|\uu^h(t+h,x)-\uu^h(t,x)|}{h^{1/2}}\lesssim \omega_{t,1}(h),\quad (t+h,x),(t,x)\in Q^+_{3/4}.
$$
This gives 
\begin{align}
    \label{eq:u_alpha-diff-quo-est}
    \frac{|\uu(t+2h,x)-2\uu(t+h,x)+\uu(t,x)|}h\lesssim\omega_{t,1}(h),\quad 0<h<1/4,
\end{align}
which corresponds to \cite{DonJeo24}*{(4.12)}. With \eqref{eq:u_alpha-diff-quo-est} and Theorem~\ref{thm:reg-time} applied to $D_{x'}u$ at hand, we can follow the argument in Step 2 in the proof of \cite{DonJeo24}*{Theorem~1.2} to obtain $\partial_t\uu\in \mathring{C}$. Moreover, since $x_n^\al u^h=0$ on $Q'_{1/2}$ by Remark~\ref{rem:u-alpha-bound}, we can replace $u$ by $u^h$ in \eqref{eq:u_alpha} to get
$$
D_nu^h_\al(X)=(x_n^\al D_nu^h)(X)+\al\left(H^h(X)+\al\int_0^1H^h(X',x_n\tau)\tau^{-\al}d\tau\right),
$$
where $H^h(X)=\int_0^1(x_n^\al D_nu^h)(X',x_n\rho)d\rho$. Since $x_n^\al D_nu^h\in \mathring{C}$, this equality implies $D_nu^h_\al\in \mathring{C}$. In particular, we have the estimate
\begin{align}\label{eq:deriv-h-est}
|D_n\uu^h(t+h)-D_n\uu^h(t)|\lesssim\omega_{x,1}(h^{1/2}).
\end{align}
We then argue as in Step 3 in the proof of \cite{DonJeo24}*{Theorem~1.2} to obtain 
$$
\frac{|\de_{t,h}D_n\uu|}{h^{1/2}}\lesssim \sigma_1(h^{1/2})\quad\text{uniformly in }(t,x).
$$
This implies the uniform convergence in \eqref{eq:diff-quo-reg}. Now, with \eqref{eq:deriv-h-est} and $DD_n\uu\in C_t$, which follows from $D_{x'}\uu\in \mathring{C}^{1/2,1}$ and $D_{nn}\uu\in \mathring{C}$, at hand, we can repeat the proof of Theorem~\ref{thm:diff-quo-conti} with $D_n\uu$ in the place of $\uu$ to conclude $\frac{\de_{t,h}(D_n\uu)}{h^{1/2}}\in C_X(\overline{Q_{1/2}^+})$ uniformly in $h\in (0,1/4)$.

Next, with $x_n^\al DD_{x'}u\in \mathring{C}$, $\partial_t\uu\in \mathring{C}$, $D_{x'}\uu\in\mathring{C}^{1/2,1}$, and $x_n^\al D_nu\in \mathring{C}$ at hand, we can argue as in Step 3 in the proof of Theorem~\ref{thm:higher-reg-Holder-2} to obtain $D_n(x_n^\al D_nu)\in\mathring{C}$ and $D_{nn}\uu\in \mathring{C}$.

It remains to prove $x_n^\al D_nu\in \mathring{C}^{1/2,1}$. Thanks to $x_n^\al D_nD_{x'}u\in\mathring{C}$ and $D_n(x_n^\al D_nu)\in\mathring{C}$, it is sufficient to show
$$
\lim_{h\to0}\frac{\delta_{t,h}(x_n^\al D_nu)}{h^{1/2}}(t,x)=0\quad\text{and}\quad\frac{\delta_{t,h}(x_n^\al D_nu)}{h^{1/2}}\in C_X
$$
uniformly in $(t,x)\in\overline{Q^+_{1/2}}$ and $h\in(0,1/4)$, respectively. To this end, we observe that
$$
(x_n^\al D_nu)^h(t,x):=\frac{(x_n^\al D_nu)(t,x)-(x_n^\al D_nu)(t-h,x)}{h^{1/2}}=(x_n^\al D_nu^h)(t,x)
$$
and recall that $u^h$ is a solution of \eqref{eq:pde-h}. Thus, by Theorem~\ref{thm:reg-Diri}, we have
$$
|(x_n^\al D_nu)^h(t+h,x)-(x_n^\al D_nu)^h(t,x)|\lesssim \omega_{x,1}(h^{1/2}).
$$
We then argue as in Step 3 in the proof of \cite{DonJeo24}*{Theorem~1.2} to get
$$
\frac{|\delta_{t,h}(x_n^\al D_nu)|}{h^{1/2}}\lesssim\sigma_1(h^{1/2})\quad\text{uniformly in }(t,x).
$$
Finally, by using this estimate and the previous result $x_n^\al DD_{x'}u\in\mathring{C}$ and $D_n(x_n^\al D_nu)\in\mathring{C}$, which implies $D(x_n^\al D_nu)\in\mathring{C}$, we can follow the argument in the proof of Theorem~\ref{thm:diff-quo-conti} to conclude that $\frac{\delta_{t,h}(x_n^\al D_nu)}{h^{1/2}}\in C_X$ uniformly in $h$.
\end{proof}


\section{Boundary Harnack Principle}\label{sec:BHP}
In this section, we establish the higher-order boundary Harnack principle for degenerate or singular equations, Theorem~\ref{thm:par-BHP-deg}. The proof follows the approach in the corresponding result for uniformly parabolic equations (\cite{DonJeo24}*{Theorem~1.4}), but we provide the full proof since there are technical differences.

\begin{proof}[Proof of Theorem~\ref{thm:par-BHP-deg}]
In the same way as earlier, we write
$$
\uu(X):=x_n^\al u(X),\quad \vv(X):=x_n^\al v(X).
$$
Note that $\uu,\vv\in C^{\frac{k+\be}2,k+\be}(\overline{Q^+_{4/5}})$ by Theorem~\ref{thm:higher-reg-Holder}. Since $\uu=0$ on $Q'_{4/5}$ by Remark~\ref{rem:u-alpha-bound}, this gives
\begin{align}\label{eq:u_alpha/x_n}
u(X)/x_n^{1-\al}=\uu(X)/x_n=\int_0^1\partial_n\uu(X',x_n\tau)d\tau\in C^{\frac{k-1+\be}2,k-1+\be}(\overline{Q^+_{4/5}}).
\end{align}
Similarly, $v/x_n^{1-\al}\in C^{\frac{k-1+\be}2,k-1+\be}$. Moreover, the condition $\partial_n(x_n^\al u)>0$ on $Q'_1$ and the continuity and positivity of $u/x_n^{1-\al}$ in $Q_{4/5}^+$ imply that $u/x_n^{1-\al}\ge c>0$ in $Q^+_{3/4}$. Thus $w:=v/u$ satisfies
$$
\|w\|_{L^\infty(Q^+_{3/4})}\le \|v/x_n^{1-\al}\|_{L^\infty(Q^+_{3/4})}\|(u/x_n^{1-\al})^{-1}\|_{L^\infty(Q_{3/4}^+)}\le C.
$$
We then divide the rest of the proof into three steps.

\medskip\noindent\emph{Step 1.} Suppose $k=1$. We will first derive an a priori Hölder estimate of $\D w$ under the assumption $\D w\in C^{\be/2,\be}(\overline{Q_{3/4}^+})$. The general case follows from a standard approximation argument. By the symmetry of $A$,
\begin{align*}
    \div(x_n^\al u^2A\D w)&=\div(ux_n^\al A\D v-vx_n^\al A\D u)=u\div(x_n^\al A\D v)-v\div(x_n^\al A\D u)\\
    &=u(x_n^\al\partial_tv+g)-v(x_n^\al\partial_tu+f)=x_n^\al u^2\partial_tw+ug-vf.
\end{align*}
This gives
\begin{align*}
    \div(x_n^{2-\al}A\D w)&=\div\left(\left(u/x_n^{1-\al}\right)^{-2}x_n^\al u^2A\D w\right)\\
    &=\left(u/x_n^{1-\al}\right)^{-2}\div(x_n^\al u^2A\D w)+\lmean{\D\left(\left(u/x_n^{1-\al}\right)^{-2}\right),x_n^\al u^2A\D w}\\
    &=\left(u/x_n^{1-\al}\right)^{-2}(x_n^\al u^2\partial_tw+ug-vf)\\
    &\qquad+2x_n^{1-\al}\lmean{A\left((1-\al)\vec{e}_n-\left(u/x_n^{1-\al}\right)^{-1}x_n^\al\D u\right),\D w}\\
    &=x_n^{2-\al}\partial_tw+x_n^{1-\al}\tilde g,
\end{align*}
where
\begin{align}\label{eq:rhs-w}\begin{split}
    \tilde g(X',x_n)&:=\left(u/x_n^{1-\al}\right)^{-1}g-\left(u/x_n^{1-\al}\right)^{-2}\left(v/x_n^{1-\al}\right)f\\
    &\qquad+\lmean{2A\left((1-\al)\vec{e}_n-\left(u/x_n^{1-\al}\right)^{-1}x_n^\al\D u\right),\D w}.
\end{split}\end{align}
We write $\tilde g=\tilde g_1+\mean{\bb,\D w}$, where $\tilde g_1=(u/x_n^{1-\al})^{-1}g-(u/x_n^{1-\al})^{-2}(v/x_n^{1-\al})f$ and $\bb=2A((1-\al)\vec{e}_n-(u/x_n^{1-\al})^{-1}x_n^\al\D u)$. We have
\begin{align}\label{eq:pde-w}
\div(x_n^{2-\al}A\D w)-x_n^{2-\al}\partial_tw=\div(x_n^{2-\al}\tilde\bg),
\end{align}
where
$$
\tilde\bg(X',x_n):=\frac{\vec{e}_n}{x_n^{2-\al}}\int_0^{x_n}\rho^{1-\al}\tilde g(X',\rho)d\rho=\vec{e}_n\int_0^1\tau^{1-\al}\tilde g(X',x_n\tau)d\tau.
$$
Since $2-\al>1$, $w$ satisfies the conormal boundary condition
$$
\lim_{x_n\to0+}x_n^{2-\al}\mean{A\D w-\tilde\bg,\vec{e}_n}=0.
$$
Note that $[\tilde\bg]_{C^{\be/2,\be}(\overline{Q_r^+(X_0)})}\le C[\tilde g]_{C^{\be/2,\be}(\overline{Q_r^+(X_0)})}\le C$ whenever $X_0\in Q'_{2/3}$ and $Q_r^+(X_0)\subset Q^+_{3/4}$. Then, by following the arguments in \cite{Don12}*{Section~5}, we obtain that for every $X_0\in Q^+_{2/3}$ and $0<\rho<r<1/96$,
\begin{align*}
    \tilde\psi(X_0,\rho)&\le C(\rho/r)^\be\|\D w\|_{L^\infty(Q^+_{5r}(X_0))}\\
    &\qquad+C\left(\|\D w\|_{L^\infty(Q_{8r}^+(X_0))}[A]_{C^{\be/2,\be}(\overline{Q_{8r}^+(X_0)})}+[\tilde\bg]_{C^{\be/2,\be}(\overline{Q^+_{8r}(X_0)})}\right)\rho^\be,
\end{align*}
where $\tilde\psi(X_0,r)=\left(\dashint_{Q_r^+(X_0)}|\D w-\mean{\D w}_{Q_r^+(X_0)}|^2d\tilde\mu\right)^{1/2}$, $d\tilde\mu:=x_n^{2-\al}dX$. This implies that for any $0<r_0<R_0<1/6$, $X\in Q^+_{r_0}$ and $0<\rho<\frac{R_0-r_0}{16}$,
$$
\tilde\psi(X,\rho)\le CM\rho^\be,\quad M:=\frac{\|\D w\|_{L^\infty(Q_{R_0}^+)}}{(R_0-r_0)^\be}+[\tilde g]_{C^{\be/2,\be}(\overline{Q^+_{R_0}})}
$$
for some constant $C=C(n,\al,\la,\be,[A]_{C^{\be/2,\be}(\overline{Q^+_{3/4}})})>0$. By the standard Campanato space embedding (see e.g. the proof of Theorem~3.1 in \cite{HanLin97}), we have
\begin{align}
    \label{eq:grad-w-holder}\begin{split}
    [\D w]_{C^{\be/2,\be}(\overline{Q_{r_0}^+})}&\le C\left(M+\frac{\|\D w\|_{L^\infty(Q^+_{R_0})}}{(R_0-r_0)^\be}\right)\\
    &\le C\left(\frac{\|\D w\|_{L^\infty(Q^+_{R_0})}}{(R_0-r_0)^\be}+[\tilde g]_{C^{\be/2,\be}(\overline{Q^+_{R_0}})}\right).
\end{split}\end{align}

We claim that $\bb\in C^{\be/2,\be}$ and $|\bb|\le Cx_n^\be$ in $\overline{Q^+_{3/4}}$. Indeed, since $u/x_n^{1-\al}\ge c$, $\uu\in C^{\frac{1+\be}2,1+\be}(\overline{Q^+_{3/4}})$ and $\uu=0$ on $Q'_{1/2}$, we have
$$
\left|\left(u/x_n^{1-\al}\right)^{-1}x_n^\al\D_{x'}u\right|=\left|\left(u/x_n^{1-\al}\right)^{-1}\D_{x'}\uu\right|\le Cx_n^\be
$$
and
\begin{align*}
    \left|(1-\al)\left(u/x_n^{1-\al}\right)-x_n^\al D_nu\right|&=\left|\uu/x_n-\left(\al\left(u/x_n^{1-\al}\right)+x_n^\al D_nu\right)\right|=\left|\uu/x_n-D_n\uu\right|\\
    &\le \int_0^1|D_n\uu(X',x_n\tau)-D_n\uu(X',x_n)|d\tau\le Cx_n^\be.
\end{align*}
These two estimates imply $|\bb|\le Cx_n^\be$. In addition, it can be inferred from the above computations and \eqref{eq:u_alpha/x_n} that
$$
\bb=2A\left(-\left(u/x_n^{1-\al}\right)^{-1}\D_{x'}\uu,1-\frac{D_n\uu}{\uu/x_n} \right)\in C^{\be/2,\be}.
$$
The claim is proved.

By using the claim above, we get for every $0<r_0<R_0<1/6$,
$$
[\tilde g]_{C^{\be/2,\be}(\overline{Q^+_{R_0}})}\le [\tilde g_1]_{C^{\be/2,\be}(\overline{Q_{3/4}^+})}+C\|\D w\|_{L^\infty(Q_{R_0}^+)}+CR_0^\be[\D w]_{C^{\be/2,\be}(\overline{Q_{R_0}^+})}.
$$
Combining this with \eqref{eq:grad-w-holder} and using the interpolation inequality yield 
\begin{align*}
    [\D w]_{C^{\be/2,\be}(\overline{Q_{r_0}^+})}&\le C\left(\frac{\|\D w\|_{L^\infty(Q_{R_0}^+)}}{(R_0-r_0)^\be}+[\tilde g_1]_{C^{\be/2,\be}(\overline{Q_{3/4}^+})}\right)+CR_0^\be[\D w]_{C^{\be/2,\be}(\overline{Q_{R_0}^+})}\\
    &\le C\left(\frac{\|w\|_{L^\infty(Q_{3/4}^+)}}{(R_0-r_0)^{2+\be}}+[\tilde g_1]_{C^{\be/2,\be}(\overline{Q_{3/4}^+})}\right)+CR_0^\be[\D w]_{C^{\be/2,\be}(\overline{Q_{R_0}^+})}.
\end{align*}
We fix $S_0\in (0,1/6)$ so that $CS_0^\be<1/2$ and apply \cite{Gia83}*{Lemma~3.1 of Ch.V} to deduce
$$
[\D w]_{C^{\be/2,\be}(\overline{Q^+_{S_0/2}})}\le C\left(\|w\|_{L^\infty(Q_{3/4}^+)}+[\tilde g_1]_{C^{\be/2,\be}(\overline{Q_{3/4}^+})}\right).
$$
By varying the centers $X_0'\in Q'_{2/3}$, we further have $\D w\in C^{\be/2,\be}(\overline{Q_{2/3}^+\cap\{x_n<S_0/2\}})$. Moreover, since $w$ satisfies $\div(x_n^{2-\al}A\D w)-x_n^{2-\al}\partial_tw=x_n^{1-\al}\tilde g_1+\mean{x_n^{1-\al}\bb,\D w}$ in $\cQ:=\overline{Q_{2/3}^+}\cap\{x_n>S_0/4\}$ and $0<c<x_n^{2-\al}<C$ in $\cQ$, we have $\D w\in C^{\be/2,\be}(\cQ)$ by \cite{Don12}*{Theorem~2}. Therefore, $\D w\in C^{\be/2,\be}(\overline{Q^+_{2/3}})$. Then $w$ can be seen as a solution of \eqref{eq:pde-w} in $Q^+_{2/3}$ with $\tilde\bg\in C^{\be/2,\be}(\overline{Q^+_{2/3}})$. Thus $w\in C^{\frac{1+\be}2,1+\be}(\overline{Q^+_{1/2}})$ by \cite{DonJeo24}*{Thoerem~4.1}.

\medskip\noindent\emph{Step 2.} 
In this step, we prove the theorem when $k=2$. To this end, we need to show $\D_{x'}w\in C^{\frac{1+\be}2,1+\be}$, $D_nw\in C_t^{\frac{1+\be}2}$, $\partial_tw\in C^{\be/2,\be}$, and $D_{nn}w\in C^{\be/2,\be}$. From \eqref{eq:pde-w}, we infer
$$
\div(x_n^{2-\al}A\D(\D_{x'}w))-x_n^{2-\al}\partial_t(\D_{x'}w)=\div(x_n^{2-\al}(\D_{x'}\tilde\bg-\D_{x'}A\D w)).
$$
Note that
$$
\D_{x'}\tilde\bg=\vec{e}_n\int_0^1\tau^{1-\al}(\D_{x'}g_1+\mean{\D_{x'}\bb,\D w}+\mean{\bb,\D(\D_{x'}w)})(X',x_n\tau)d\tau.
$$
As $\D_{x'}A\D w$, $\D_{x'}g_1+\mean{\D_{x'}\bb,\D w}\in C^{\be/2,\be}$, we have $\D_{x'}w\in C^{\frac{1+\be}2,1+\be}$ by Step 1.

Next, we observe that $w^h(t,x):=\frac{w(t,x)-w(t-h,x)}{h^{1/2}}$ satisfies
$$
\div(x_n^{2-\al}A\D w^h)-x_n^{2-\al}\partial_tw^h=\div(x_n^{2-\al}\hat\bg^h)\quad\text{in }Q_1^+,
$$
where $\hat\bg^h:=\tilde\bg^h-A^h(X)\D w(t-h,x)$. Note that
\begin{align*}
    \tilde\bg^h(X)=\vec{e}_n\int_0^1\tau^{1-\al}(\mean{\bb,\D w^h}+\tilde g_1^h)(X',\tau x_n)+\mean{\bb^h(X',\tau x_n), \D w(t-h,x',\tau x_n)})d\tau.
\end{align*}
Since $A^h, \D w,\tilde g_1^h,\bb^h\in C^{\be/2,\be}$, we obtain by Step 1 that $w^h\in C^{\frac{1+\be}2,1+\be}$. This implies $D_nw\in C^{\frac{1+\be}2}_t$ and $\partial_tw\in C^{\be/2,\be}$; see Step 3 in the proof of \cite{DonJeo24}*{Theorem~4.2}.

Finally, to show $D_{nn}w\in C^{\be/2,\be}$, we recall $\div(x_n^{2-\al}A\D w)=x_n^{2-\al}\partial_tw+x_n^{1-\al}\tilde g_1+x_n^{1-\al}\mean{\bb,\D w}$. We set $W:=\mean{A\D w,\vec{e}_n}-\frac{\tilde g_1}{2-\al}\in C^{\be/2,\be}(\overline{Q^+_{1/2}})$. It is easily seen that
\begin{align*}
    x_n^{\al-2}D_n(x_n^{2-\al}W)=\partial_tw+\mean{\bb/x_n,\D w}-\sum_{i=1}^{n-1}D_i(\mean{A\D w,\vec{e}_i})-\frac{D_n\tilde g_1}{2-\al}=:\bar f.
\end{align*}
Since $W\in C^{\be/2,\be}(\overline{Q_{1/2}^+})$ implies $x_n^{2-\al}W=0$ on $Q_{1/2}'$, this equality gives
$$
W=\frac1{x_n^{2-\al}}\int_0^{x_n}\rho^{2-\al}\bar f(X',\rho)d\rho,
$$
and thus
\begin{align}\label{eq:W}
D_nW=\bar f+(\al-2)\int_0^1\rho^{2-\al}\bar f(X',x_n\rho)d\rho.
\end{align}
Since $\frac{\D_{x'}u(X)}{x_n^{1-\al}}=\frac{\D_{x'}\uu(X)}{x_n}=\int_0^1D_n\D_{x'}\uu(X',x_n\rho)d\rho\in C^{\be/2,\be}$ and $\frac1{x_n}\left(\frac{(1-\al)u(X)}{x_n^{1-\al}}-x_n^\al D_nu(X)\right)=\frac1{x_n}\left(\frac{\uu(X)}{x_n}- D_n\uu(X)\right)=-\int_0^1\rho D_{nn}\uu(X',x_n\rho)d\rho\in C^{\be/2,\be}$, we have $\bb/x_n\in C^{\be/2,\be}$ by \eqref{eq:u_alpha/x_n}. Moreover, from \eqref{eq:u_alpha/x_n}, $u/x_n^{1-\al}\ge c>0$, and $f,g\in C^{\frac{1+\be}2,1+\be}$, we see that $\tilde g_1\in C^{\frac{1+\be}2,1+\be}$. Thus, $\bar f\in C^{\be/2,\be}$, hence $D_nW\in C^{\be/2,\be}$ by \eqref{eq:W}. Therefore, we conclude $D_{nn}w\in C^{\be/2,\be}$.

\medskip\noindent\emph{Step 3.} The case when $k\ge 3$ follows from an induction argument as in the proof of \cite{DonJeo24}*{Theorem~4.1}. 
\end{proof}


\begin{bibdiv}
\begin{biblist}

\bib{AudFioVit24a}{article}{
   author={Audrito, Alessandro},
   author={Fioravanti, Gabriele},
   author={Vita, Stefano},
   title={Schauder estimates for parabolic equations with degenerate or singular weights},
   journal={Calc. Var. Partial Differential Equations},
   volume={63},
   date={2024},
   number={8},
   pages={Paper No. 204},
   issn={0944-2669,1432-0835},
   review={\MR{4788276}},
   doi={10.1007/s00526-024-02809-2},
 }

 \bib{AudFioVit24b}{article}{
   author={Audrito, Alessandro},
   author={Fioravanti, Gabriele},
   author={Vita, Stefano},
   title={Higher order Schauder estimates for parabolic equations with degenerate or singular weights},
   journal={Rev. Mat. Iberoam.},
   date={2025},
   doi={10.4171/RMI/1540},
 }

\bib{BlaSch73}{article}{
   author={Black, Fischer},
   author={Scholes, Myron},
   title={The pricing of options and corporate liabilities},
   journal={J. Polit. Econ.},
   volume={81},
   date={1973},
   number={3},
   pages={637--654},
   issn={0022-3808,1537-534X},
   review={\MR{3363443}},
   doi={10.1086/260062},
}

\bib{CafSil07}{article}{
   author={Caffarelli, Luis},
   author={Silvestre, Luis},
   title={An extension problem related to the fractional {L}aplacian},
   journal={Comm. Partial Differential Equations},
   volume={32},
   date={2007},
   number={7-9},
   pages={1245--1260},
   issn={0360-5302,1532-4133},
   review={\MR{2354493}},
   doi={10.1080/03605300600987306},
   }

\bib{ChoDon19}{article}{
   author={Choi, Jongkeun},
   author={Dong, Hongjie},
   title={Gradient estimates for {S}tokes systems with {D}ini mean oscillation coefficients},
   journal={J. Differential Equations},
   volume={266},
   date={2019},
   number={8},
   pages={4451--4509},
   issn={0022-0396,1090-2732},
   review={\MR{3912724}},
   doi={10.1016/j.jde.2018.10.001},
   }

\bib{Don12}{article}{
   author={Dong, Hongjie},
   title={Gradient estimates for parabolic and elliptic systems from linear laminates},
   journal={Arch. Ration. Mech. Anal.},
   volume={205},
   date={2012},
   number={1},
   pages={119--149},
   issn={0003-9527},
   review={\MR{2927619}},
   doi={10.1007/s00205-012-0501-z},
}

\bib{DonJeo24}{article}{
   author={Dong, Hongjie},
   author={Jeon, Seongmin},
   title={Schauder type estimates for degenerate or singular parabolic systems with partially DMO coefficients},
   pages={35 pp},
   date={2025},
   status={2502.08926 preprint},
 }

\bib{DonJeoVit23}{article}{
   author={Dong, Hongjie},
   author={Jeon, Seongmin},
   author={Vita, Stefano},
   title={Schauder type estimates for degenerate or singular elliptic equations with DMO coefficients},
   journal={Calc. Var. Partial Differential Equations},
   volume={63},
   date={2024},
   number={9},
   pages={Paper No. 239, 42},
   issn={0944-2669,1432-0835},
   review={\MR{4821888}},
   doi={10.1007/s00526-024-02840-3},
}

\bib{DonPha21}{article}{
   author={Dong, Hongjie},
   author={Phan, Tuoc},
   title={Parabolic and elliptic equations with singular or degenerate coefficients: the {D}irichlet problem},
   journal={Trans. Amer. Math. Soc.},
   volume={374},
   date={2021},
   number={9},
   pages={6611--6647},
   issn={0002-9947,1088-6850},
   review={\MR{4302171}},
}

\bib{DonXu21}{article}{
   author={Dong, Hongjie},
   author={Xu, Longjuan},
   title={Gradient estimates for divergence form parabolic systems from composite materials},
   journal={Calc. Var. Partial Differential Equations},
   volume={60},
   date={2021},
   number={3},
   pages={Paper No. 98, 43},
   issn={0944-2669},
   review={\MR{4249876}},
   doi={10.1007/s00526-021-01927-5},
}

\bib{Gia83}{book}{
   author={Giaquinta, Mariano},
   title={Multiple integrals in the calculus of variations and nonlinear elliptic systems},
   series = {Annals of Mathematics Studies},
   volume={105},
   publisher = {Princeton University Press, Princeton, NJ},
   date={1983},
   pages={vii+297},
   isbn={0-691-08330-4; 0-691-08331-2},
   review={\MR{717034}},
}

\bib{Gia93}{book}{
   author={Giaquinta, Mariano},
   title={Introduction to regularity theory for nonlinear elliptic systems},
   series = {Lectures in Mathematics ETH Z\"{u}rich},
   publisher = {Birkh\"{a}user Verlag, Basel},
   date={1993},
   pages={viii+131},
   isbn={3-7643-2879-7},
   review={\MR{1239172}},
}

\bib{HanLin97}{book}{
   author={Han, Qing},
   author={Lin, Fanghua},
   title={Elliptic partial differential equations},
   series={Courant Lecture Notes in Mathematics},
   volume={1},
   publisher={New York University, Courant Institute of Mathematical
   Sciences, New York; American Mathematical Society, Providence, RI},
   date={1997},
   pages={x+144},
   isbn={0-9658703-0-8},
   isbn={0-8218-2691-3},
   review={\MR{1669352}},
}

\bib{JeoVit24}{article}{
   author={Jeon, Seongmin},
   author={Vita, Stefano},
   title={Higher order boundary Harnack principles in Dini type domains},
   journal={J. Differential Equations},
   volume={412},
   date={2024},
   pages={808-856},
   doi={10.1016/j.jde.2024.08.059},
}

\bib{Kuk22}{article}{
   author={Kukuljan, Teo},
   title={Higher order parabolic boundary Harnack inequality in $C^1$ and
   $C^{k,\alpha}$ domains},
   journal={Discrete Contin. Dyn. Syst.},
   volume={42},
   date={2022},
   number={6},
   pages={2667--2698},
   issn={1078-0947},
   review={\MR{4421508}},
   doi={10.3934/dcds.2021207},
}

\bib{Pav14}{book}{
   author={Pavliotis, Grigorios A.},
   title={Stochastic processes and applications},
   series = {Texts in Applied Mathematics},
   note={Diffusion processes, the Fokker-Planck and Langevin equations},
   volume={60},
   publisher = {Springer, New York},
   date={2014},
   pages={xiv+339},
   review={\MR{3288096}},
   doi={10.1007/978-1-4939-1323-7},
}

\bib{Ste70}{book}{
   author={Stein, Elias M.},
   title={Singular integrals and differentiability properties of functions},
   series = {Princeton Mathematical Series},
   publisher = {Princeton University Press, Princeton, NJ},
   date={1970},
   pages={xiv+290},
   review={\MR{290095}},
}

 \bib{TerTorVit22}{article}{
   author={Terracini, Susanna},
   author={Tortone, Giorgio},
   author={Vita, Stefano},
   title={Higher order boundary Harnack principle via degenerate equations},
   journal={Arch. Ration. Mech. Anal.},
   volume={248},
   date={2024},
   number={2},
   pages={Paper No. 29, 44},
   issn={0003-9527},
   review={\MR{4726059}},
   doi={10.1007/s00205-024-01973-1},
}

\end{biblist}
\end{bibdiv}
\end{document}